\theoremstyle:=definition,remark,plain\do{%
  \expandafter\g@addto@macro\csname th@\theoremstyle\endcsname{%
    \addtolength\thm@preskip\parskip }%
}
\definecolor{darkblue}{rgb}{0,0,0.5}
\definecolor{darkred}{rgb}{0.5,0,0}
\definecolor{darkgreen}{rgb}{0,0.5,0}
\theoremstyle{plain}
\newtheorem{theorem}{Theorem}[section]
\newtheorem{corollary}[theorem]{Corollary}
\newtheorem{prop}[theorem]{Proposition}
\newtheorem{lemma}[theorem]{Lemma}
\theoremstyle{definition}
\newtheorem{definition}[theorem]{Definition}
\theoremstyle{remark}
\newtheorem{remark}[theorem]{Remark}
\newcommand{\Rplus}{[0,\infty)}
\newcommand{\R}{\mathbb{R}}
\newcommand{\N}{\mathbb{N}}
\newcommand{\Z}{\mathbb{Z}}
\newcommand{\PP}{\mathbb{P}}
\newcommand{\E}{\mathbb{E}}
\newcommand{\1}{\mathbbm{1}}
\newcommand{\eqdist}{\overset{(d)}{=}}
\newcommand{\dd}{\mathrm{d}}
\newcommand{\e}{\mathrm{e}}
\newcommand{\point}{\,\cdot\,}
\renewcommand{\epsilon}{\varepsilon}
\renewcommand{\rho}{\varrho}
\renewcommand{\phi}{\varphi}
\renewcommand{\emptyset}{\varnothing}
\newcommand{\Leb}{\ell}
\title{Mutations on a Random Binary Tree with Measured Boundary}
\author{Jean-Jil Duchamps\footnote{LPMA -UMR7599- UPMC Univ Paris 06} \footnote{CIRB -UMR7241- Collège de France} {} and Amaury Lambert\footnotemark[1] \footnotemark[2]}
\begin{document}

\maketitle

\begin{abstract}
\noindent Consider a random real tree whose leaf set, or boundary, is endowed with a finite mass measure. Each element of the tree is further given a type, or allele, inherited from the most recent atom of a random point measure (infinitely-many-allele model) on the skeleton of the tree. The partition of the boundary into distinct alleles is the so-called \emph{allelic partition}.

In this paper, we are interested in the infinite trees generated by supercritical, possibly time-inhomogeneous, binary branching processes, and in their boundary, which is the set of particles `co-existing at infinity'. We prove that any such tree can be mapped to a random, compact ultrametric tree called \emph{coalescent point process}, endowed with a `uniform' measure on its boundary which is the limit as $t\to\infty$ of the properly rescaled counting measure of the population at time $t$.

We prove that the clonal (i.e., carrying the same allele as the root) part of the boundary is a regenerative set that we characterize. We then study the allelic partition of the boundary through the measures of its blocks. We also study the dynamics of the clonal subtree, which is a Markovian increasing tree process as mutations are removed. 

\bigskip

\noindent {\sc Keywords and phrases}:  coalescent point process; branching process; random point measure; allelic partition; regenerative set; tree-valued process.

\bigskip

\noindent MSC2000 subject classifications: primary 05C05, 60J80; secondary 54E45; 60G51; 60G55; 60G57; 60K15; 92D10.
\end{abstract}

%\listoftodos

\tableofcontents

\section{Introduction}
%\addcontentsline{toc}{section}{Introduction}
%\phantomsection

In this paper, we give a new flavor of an old problem of mathematical population genetics which is to characterize the so-called \emph{allelic partition} of a population. To address this problem, one needs to specify a model for the genealogy (i.e., a random tree) and a model for the mutational events (i.e., a point process on the tree). Two typical assumptions that we will adopt here are: the \emph{infinite-allele assumption}, where each mutation event confers a new type, called \emph{allele}, to its carrier; and the \emph{neutrality of mutations}, in the sense that co-existing individuals are exchangeable, regardless of the alleles they carry. Here, our goal is to study the allelic partition of the boundary of some random real trees that can be seen as the limits of properly rescaled binary branching processes. 

In a discrete tree, a natural object describing the allelic partition without labeling alleles is the \emph{allele frequency spectrum} 
%In population genetics, one is interested in the different versions (or alleles) of a gene that appear in a sample of individuals.
%The genealogy of these individuals and the random mutations that occurred in the past explain the present partition of alleles.
%Thus, within a particular probabilistic model of a random tree with marks that represent mutations, one can compute the distribution of the quantities of interest.
%For instance one such quantity is the allele frequency spectrum, which is a description of the allelic partition without needing to label alleles.
%It is generally noted by the sequence 
$(A_k)_{k\geq 1}$, where $A_k$ is the number of alleles carried by exactly $k$ co-existing individuals in the population. In the present paper, we start from a time-inhomogeneous, supercritical binary branching process with finite population $N(t)$ at any time $t$, and we are interested in the allelic partition of individuals `co-existing at infinity' ($t\to\infty$), that is the allelic partition at the \emph{tree boundary}. To define the analogue of the frequency spectrum, we need to equip the tree boundary with a measure $\ell$, which we do as follows. Roughly speaking, if $N_u(t)$ is the number of individuals co-existing at time $t$ in the subtree $\mathcal T_u$ consisting of descendants of the same fixed individual $u$, the measure $\ell(\mathcal T_u)$ is proportional to $\lim_{t\uparrow\infty} N_u(t)/N(t)$.
It is shown in Section \ref{sec_bd_proc} that the tree boundary of any supercritical branching process endowed with the (properly rescaled) tree metric and the measure $\ell$ has the same law as a random real tree, called \emph{coalescent point process} (CPP) generated from a Poisson point process, equipped with the so-called comb metric \citep{lambert_comb_2016} and the Lebesgue measure.
Taking this result for granted, we will focus in Sections \ref{section_constr}, \ref{section_pop_clonale} and \ref{section_couplage} on coalescent point processes with mutations.

In the literature, various models of random trees and their associated allelic partitions have been considered. The most renowned result in this context is \textit{Ewens' Sampling Formula} \citep{ewens_sampling_1972}, a formula that describes explicitly the distribution of the allele frequency spectrum in a sample of $n$ co-existing individuals taken from a stationary population with genealogy given by the Moran model with population size $N$ and mutations occurring at birth with probability $\theta/N$. When time is rescaled by $N$ and $N\to\infty$, this model converges to the Kingman coalescent \citep{kingman_coalescent_1982} with Poissonian mutations occurring at rate $\theta$ along the branches of the coalescent tree. In the same vein, a wealth of recent papers has dealt with the allelic partition of a sample taken from a $\Lambda$-coalescent or a $\Xi$-coalescent with Poissonian mutations, e.g., \citep{berestycki2014, freund2012, freund2009, basdevant2008}.

In parallel, several authors have studied the allelic partition in the context of branching processes, starting with \citep{Griffiths1988} and the monograph \citep{Taib1992}, see \citep{champagnat2012birth} and the references therein. In a more recent series of papers \citep{lambert_allelic_2009, champagnat_splitting_2012, champagnat_splitting_2013, Delaporte2016}, the second author and his co-authors have studied the allelic partition at a fixed time of so-called `splitting trees', which are discrete branching trees where individuals live \textit{i.i.d} lifetimes and give birth at constant rate. In particular, they obtained the almost sure convergence of the normalized frequency spectrum $(A_k(t)/N(t))_{k\geq 1}$ as $t\to\infty$ \citep{champagnat_splitting_2012} as well as the convergence in distribution of the (properly rescaled) sizes of the most abundant alleles \citep{champagnat_splitting_2013}. The limiting spectrum of these trees is to be contrasted with the spectrum of their limit, which is the subject of the present study, as explained earlier.

Another subject of interest is the allelic partition of the entire progeny of a (sub)critical branching process, as studied in particular in \citep{bertoin_structure_2009}. The scaling limit of critical branching trees with mutations is a Brownian tree with Poissonian mutations on its skeleton. Cutting such a tree at the mutation points gives rise to a forest of trees whose distribution is investigated in the last section of \citep{bertoin_structure_2009}, and relates to cuts of Aldous' CRT in \citep{aldous_standard_1998} or the Poisson snake process \citep{abraham_poisson_2002}. 
The couple of previously cited works not only deal with the limits of allelic partitions for the whole discrete tree, but also tackle the limiting object directly. This is also the goal of the present work, but with quite different aims.

%For another model called splitting trees, where individuals live an \textit{i.i.d} lifetime and reproduce independently at constant rate along their lives, Champagnat and Lambert \citep{champagnat_splitting_2012} manage to characterize the mean allelic partition.
%They also obtain almost sure convergence for the normalized small families (the $A_k/N$ for fixed $k$, when $N$ denotes the total population) as $N$ goes to infinity, and study the structure of large families in a second paper \citep{champagnat_splitting_2013}.
%The key idea to study splitting trees is that when observing such a population after a fixed time $t$, the genealogy is a so-called coalescent point process, constructed from a sequence of coalescing branches whose lengths are \textit{i.i.d}.

First, we construct in Section \ref{section_constr} an ultrametric tree with boundary measured by a `Lebesgue measure' $\ell$, from a Poisson point process with infinite intensity $\nu$, on which we superimpose Poissonian neutral mutations with intensity measure $\mu$. Section \ref{section_constr} ends with Proposition \ref{prop:total-nb-mut}, which states that the total number of mutations in any subtree is either finite a.s.\ or infinite a.s.\ according to an explicit criterion involving $\nu$ and $\mu$.  

The structure of the allelic partition at the boundary is studied in detail in Section \ref{section_pop_clonale}. Theorem \ref{thm_cpp_clonal} ensures that the subset of the boundary carrying no mutations (or clonal set) is a (killed) regenerative set with explicit Laplace exponent in terms of $\nu$ and $\mu$ and measure given in Corollary \ref{cor_taille_pop}. The mean intensity $\Lambda$ of the allele frequency spectrum at the boundary is defined by $\Lambda(B):=\E \sum \1_{\ell(R)\in B}$, where the sum is taken over all allelic clusters at the boundary. It is explicitly expressed in Proposition \ref{prop_allelic_freq}. An a.s.\ convergence result as the radius of the tree goes to infinity is given in Proposition \ref{prop:radius} for the properly rescaled number of alleles with measure larger than $q>0$, which is the analogue of $\sum_{k\geq q} A_{k}$ in the discrete setting.

Section \ref{section_couplage} is dedicated to the study of the dynamics of the clonal (mutation-free) subtree when mutations are added or removed through a natural coupling of mutations in the case when $\mu(\dd x)=\theta \dd x$. It is straightforward that this process is Markovian as mutations are added. As mutations are removed, the growth process of clonal trees also is Markovian, and its semigroup and generator are provided in Theorem \ref{thm_arbre_markov}.

Section \ref{sec_bd_proc} is devoted to the links between measured coalescent point processes and measured pure-birth trees which motivate the present study. Lemma \ref{lemma_link_bd_cpp+meas} gives a representation of every CPP with measured boundary, in terms of a rescaled pure-birth process with boundary measured by the rescaled counting measures at fixed times. Conversely, Theorem \ref{thm_yule} gives a representation of any such pure-birth process in terms of a CPP with intensity measure $\nu(dx) = \frac{dx}{x^2}$, as in the case of the Brownian tree.

\section{Preliminaries and Construction} \label{section_constr}

\subsection{Discrete Trees, Real Trees}
Let us recall some definitions of discrete and real trees, which will be used to define the tree given by a so-called coalescent point process.

In graph theory, a tree is an acyclic connected graph.
We call discrete trees such graphs that are labeled according to Ulam--Harris--Neveu's notation by labels in the set $\mathcal{U}$ of finite sequences of non-negative integers:
\[\mathcal{U} = \bigcup_{n\geq 0} \mathbb{Z}_+^n = \{u_1 u_2 \ldots u_n, \; u_i \in \mathbb{Z}_+, n\geq 0\},\]
with the convention $\mathbb{Z}_+^0 = \{\emptyset\}$.

\begin{definition}
A \textbf{rooted discrete tree} is a subset $\mathcal{T}$ of $\mathcal{U}$ such that
\begin{itemize}
\item $\emptyset \in \mathcal{T}$ and is called the \textbf{root} of $\mathcal T$
\item For $u = u_1 \ldots u_n \in \mathcal{T}$ and $1\leq k < n$, we have $u_1 \ldots u_k \in \mathcal{T}$.
\item For $u \in \mathcal{T}$ and $i \in \mathbb{Z}_+$ such that $ui \in \mathcal{T}$, for $0\leq j \leq i$, we have $uj\in\mathcal{T}$ and $uj$ is called a \textbf{child} of $u$.
%The number $(\max \{i \geq 1, \, ui \in \mathcal{T}\}-1)$ is called the number of \textbf{children} of $u$.
\end{itemize}
For $n\geq 0$, the \textbf{restriction of $\mathcal{T}$ to the first $n$ generations} is defined by:
\[\mathcal{T}_{|n} := \{u \in \mathcal{T}, \; |u| \leq n\},\]
where $|u|$ denotes the length of a finite sequence.
For $u,v \in \mathcal{T}$, if there is $w\in \mathcal{U}$ such that $v = uw$, then $u$ is said to be an \textbf{ancestor} of $v$, noted $u \preceq v$.
Generally, let $u\wedge v$ denote the most recent common ancestor of $u$ and $v$, that is the longest word $u_0 \in \mathcal{T}$ such that $u_0\preceq u$ and $u_0\preceq v$.
The edges of $\mathcal{T}$ as a graph join the parents $u$ and their children $ui$. %(see figure \ref{fig_arbres_discrets}).

For a discrete tree $\mathcal{T}$, we define the \textbf{boundary of} $\mathcal{T}$ as
\[\partial\mathcal{T} := \{u\in \mathcal{T}, \; u0 \notin \mathcal{T}\}\cup\{v \in \Z_+^{\N}, \; \forall u \in\mathcal{U},u\preceq v\Rightarrow u\in \mathcal{T}\},\]
and we equip $\partial\mathcal{T}$ with the $\sigma$-field generated by the family $(B_u)_{u\in \mathcal{T}}$, where
\[B_u := \{v \in \partial\mathcal{T}, \; u\preceq v\}.\]
\end{definition}
\begin{remark} \label{rmq_justif_existence_measure_boundary}
With a fixed discrete tree $\mathcal{T}$, a finite measure $\mathscr{L}$ on $\partial\mathcal{T}$ is characterized by the values $(\mathscr{L}(B_u))_{u\in\mathcal{T}}$.
Reciprocally if the number of children of $u$ is finite for each $u\in\mathcal{T}$, by Carathéodory's extension theorem, any \emph{finitely additive} map $\mathscr{L} : \{B_u, \; u\in \mathcal{T}\} \to \Rplus$ extends uniquely into a finite measure $\mathscr{L}$ on $\partial\mathcal{T}$.
\end{remark}
%
%\begin{figure}[ht]
%\centering
%\def\svgwidth{0.8\textwidth}
%\input{arbres_discrets.pdf_tex}
%%\includegraphics[width=\textwidth]{arbres_discrets.pdf}
%\caption{Tree as a graph and one of its representations as a discrete tree.} \label{fig_arbres_discrets}
%\end{figure}

By assigning a positive length to every edge of a discrete tree, one gets a so-called real tree.  % that can be seen as an adjunction of intervals of $\R$.
Real trees are defined more generally as follows, see e.g. \citep{Evans}.
%\begin{definition}
%Un espace métrique $(\mathbb{T}, d)$ connexe est un \textbf{arbre réel} si et seulement si il vérifie l'une de ces conditions équivalentes :
%\begin{enumerate}[(i)]
%%\item Pour tous points $x\neq y\in \mathbb{T}$, toutes les fonctions $f:[0,1]\rightarrow\mathbb{T}$ continues injectives telle que $f(0)=x$ et $f(1)=y$ ont la même image.
%\item Pour tous points $x\neq y\in \mathbb{T}$, tous les chemins continus injectifs de $x$ à $y$ ont la même image.
%Cette unique image, notée $[\![x,y]\!]$ est alors l'unique géodésique de $x$ à $y$.
%\item \emph{(Condition des quatre points)} Pour $x,y,z,t\in \mathbb{T}$, on a
%\[d(x,y) + d(z,t) \leq \max (d(x,z) + d(y,t), d(x,t) + d(y,z)). \]
%\end{enumerate}
\begin{definition}
A metric space $(\mathbb{T}, d)$ is a \textbf{real tree} if for all $x, y \in \mathbb{T}$,
\begin{itemize}
\item There is a unique isometry $f_{x,y} : [0,d(x,y)]\rightarrow\mathbb{T}$ such that $f_{x,y}(0) = x$ and $f_{x,y}(d(x,y)) = y$,
\item All continuous injective paths from $x$ to $y$ have the same range, equal to\\ $f_{x,y}([0,d(x,y)])$.
\end{itemize}
This unique path from $x$ to $y$ is written $[\![x,y]\!]$.
The \textbf{degree} of a point $x\in\mathbb{T}$ is defined as the number of connected components of $\mathbb{T}\setminus\{x\}$, so that we may define:
\begin{itemize}
\item The \textbf{leaves} of $\mathbb{T}$ are the points with degree $1$.
\item The \textbf{internal nodes} of $\mathbb{T}$ are the points with degree $2$.
\item The \textbf{branching points} of $\mathbb{T}$ are the points with degree larger than $2$.
\end{itemize}
One can root a real tree by distinguishing a point $\rho\in \mathbb{T}$, called the \textbf{root}.
\end{definition}

From this definition, one can see that for a rooted real tree $(\mathbb{T}, d, \rho)$, for all $x,y\in \mathbb{T}$, there exists a unique point $a\in\mathbb{T}$ such that $[\![\rho, x]\!]\cap[\![\rho, y]\!] = [\![\rho, a]\!]$.
We call $a$ the \textbf{most recent common ancestor} of $x$ and $y$, noted $x\wedge y$.
There is also an intrinsic order relation in a rooted tree: if $x\wedge y = x$, that is if $x\in [\![\rho, y]\!]$, then $x$ is called an ancestor of $y$, noted $x \preceq y$.
%Any real tree also satisfies a property called the four points condition.
%\begin{prop}[Four points condition] \label{prop_4pts}
%The following four points condition holds in a real tree: for $x,y,z,t\in \mathbb{T}$, we have
%\begin{equation} \label{eq_4pts} 
%d(x,y) + d(z,t) \leq \max (d(x,z) + d(y,t), d(x,t) + d(y,z)).
%\end{equation}
%\end{prop}
%This means that given four points in the tree, we always find a situation similar to that illustrated in figure \ref{fig_arbre_reel}, where
%\[ d(x,y) + d(z,t) \text{ (in blue) } < d(x,z) + d(y,t) = d(x,t) + d(y,z))\text{ (in red) }.\]

%\begin{proof}
%Let $x,y,z,t \in \mathbb{T}$.
%In the minimal subtree of $\mathbb{T}$ that contains those four points, $x$ is a leaf at the end of its branch $[\![u_x, x]\!] := [\![y, x]\!]\cap[\![z, x]\!]\cap[\![t, x]\!]$
%(remark that if for instance $x\in[\![y,z]\!]$, then technically $x$ is not a leaf, and $u_x = x$, but the following reasoning still holds).
%We can thus replace $x$ by any point $x'\in [\![u_x,x]\!]$ without changing the difference between the right-hand side and the left-hand side in \eqref{eq_4pts}.
%Then replace $x$ by $u_x$, and $y,z,t$ in the same way.
%Necessarily, none of the remaining points is isolated (that is to say $x \in \{y,z,t\}$), so either $x=y$ and $z=t$, or $x=z$ and $y=t$, or $x=t$ and $y=z$, and the condition is easily checked.
%\end{proof}

%\begin{figure}[ht]
%\centering
%\def\svgwidth{0.75\textwidth}
%\input{arbre_reel.pdf_tex}
%%\includegraphics[width=\textwidth]{arbre_reel.pdf}
%\caption{Simple rooted real tree} \label{fig_arbre_reel}
%\end{figure}

We will call a rooted real tree a \textbf{simple} tree
%si pour chaque point de branchement $x$, l'unique géodésique $[\![\rho, x]\!]$ ne contient qu'un nombre fini de points de branchement, et si l'arbre contient au plus un nombre dénombrable de points de branchement.
if it can be defined from a discrete tree by assigning a length to each edge.
From now on, we  will restrict our attention to simple trees.

\begin{definition} \label{def_simple_tree}
A \textbf{simple (real) tree} is given by $(\mathcal{T}, \alpha, \omega)$, where $\mathcal{T} \subset \mathcal{U}$ is a rooted discrete tree, and $\alpha$ and $\omega$ are maps from $\mathcal{T}$ to $\R$ satisfying
\[ \zeta(u) := \omega(u) - \alpha(u) > 0, \]
\[\forall u \in \mathcal{T}, \forall i \in \mathbb{Z}_+, \quad ui \in \mathcal{T} \Longrightarrow \alpha(ui) = \omega(u). \]
Here $\alpha(u)$ and $\omega(u)$ are called the \textbf{birth time} and \textbf{death time} of $u$ and $\zeta(u)$ is the \textbf{life length} of $u$.

We will sometimes consider simple trees $(\mathcal{T}, \alpha, \omega, \mathscr{L})$ equipped with $\mathscr{L}$ a \textbf{measure on their boundary} $\partial\mathcal{T}$.

We call a \textbf{reversed simple tree} a triple $(\mathcal{T}, \alpha, \omega)$ where $(\mathcal{T}, -\alpha, -\omega)$ is a simple tree.
We may sometimes omit the term ``reversed'' when the context is clear enough.

The \textbf{restriction} of $A = (\mathcal{T}, \alpha, \omega)$ to the first $n$ generations is the simple tree defined by
\[A_{|n} = (\mathcal{T}_{|n}, \alpha_{|\mathcal{T}_{|n}}, \omega_{|\mathcal{T}_{|n}}).\]
%Note that for a Yule tree started from $0$, we may simply write $\alpha(u)$ and $\omega(u)$, the birth and death times of $u\in\mathcal{T}$, as:
%\[\alpha(u) = \sum_{v\prec u} \zeta(v), \qquad \omega(u) = \sum_{v\preceq u} \zeta(v).\]
\end{definition}

One can check that a simple tree $(\mathcal{T}, \alpha, \omega)$ defines a unique real rooted tree %which we denote $\mathscr{R}(\mathcal{T}, \alpha, \omega)$
defined as the completion of $(\mathbb{T}, d, \rho)$, with
\begin{equation} \label{def_simple_to_real_trees}
\begin{gathered}
\rho := (\emptyset, \alpha(\emptyset)),\\
\mathbb{T} := \{\rho\} \cup \bigcup_{u\in \mathcal{T}} \{u\}\times (\alpha(u), \omega(u)]\quad\subset \mathcal{U}\times\R,\\
d((u,x), (v,y)) := 
\begin{cases}
|x-y| & \text{if } u \preceq v \text{ or } v \preceq u, \\
x+y-2\omega(u\wedge v) & \text{otherwise}.
\end{cases}
\end{gathered}
\end{equation}
In particular, we have $(u,x)\wedge(v,y) = (u\wedge v, \omega(u\wedge v))$.

%\begin{remark}
%In addition to the previous remark, assuming that the simple tree $(\mathcal{T}, \alpha, \omega, \mathscr{L})$ is equipped with a measure on their boundary and that $\max_{u\in\mathcal{T}} \omega(u) < \infty$, then the real tree $\mathscr{R}(\mathcal{T}, \alpha, \omega, \mathscr{L})$ is canonically equipped with a measure concentrated on the set of its leaves $\mathcal{L}$.
%
%Indeed, we define the following map:
%\[\phi :
%\begin{cases}
%  \partial\mathcal{T} &\longrightarrow \mathcal{L}\\
%  v &\longmapsto 
%  {\begin{cases}
%  	(v,\omega(v)) \quad & \text{ if } v\in \mathcal{T},\\
%  	\lim_{n\to\infty} (u_n, \omega(u_n)) \quad & \text{ if } u_1 \prec u_2 \prec \ldots \prec u_n \prec \ldots \prec v.
%   \end{cases}
%  }
%\end{cases}\]
%%Now one can check that this map is measurable, so that the measure $\mathscr{L}$ on $\partial\mathcal{T}$ may be transported by $\phi$ on $\mathcal{L}$.
%%
%%Indeed, with the notation of \eqref{def_simple_to_real_trees}, let $p:\mathbb{T} \to \mathcal{T}$ be the projection on the first coordinate, and for $x\in \mathbb{T}$, let $\tau_x$ be the subtree stemming from $x$, defined by $\tau_x:=\{y \in \mathbb{T}, \; x\preceq y\}$.
%%
%%Now let $\widehat{\mathscr{L}}(\tau_x) := \mathscr{L}(B_{p(x)})$.
%%One can check that the additivity of $u\mapsto \mathscr{L}(B_u)$ implies that $\widehat{\mathscr{L}}$ extends uniquely to a measure concentrated on the leaves of $\mathscr{R}(\mathcal{T}, \alpha, \omega, \mathscr{L})$.
%\end{remark}

In this paper, we construct random simple real trees with marks along their branches.
We see these trees as genealogical/phylogenetic trees and the marks as mutations that appear in the course of evolution. We will assume that each new mutation confers a new type, called \textbf{allele}, to its bearer (infinitely-many alleles model).
Our goal is to study the properties of the \textbf{clonal subtree} (individuals who do not bear any mutations, black subtree in Figure \ref{fig_arbre_mut}) and of the \textbf{allelic partition} (the partition into bearers of distinct alleles of the population at some fixed time).

\begin{figure}[ht]
\centering
\includegraphics[width=\textwidth]{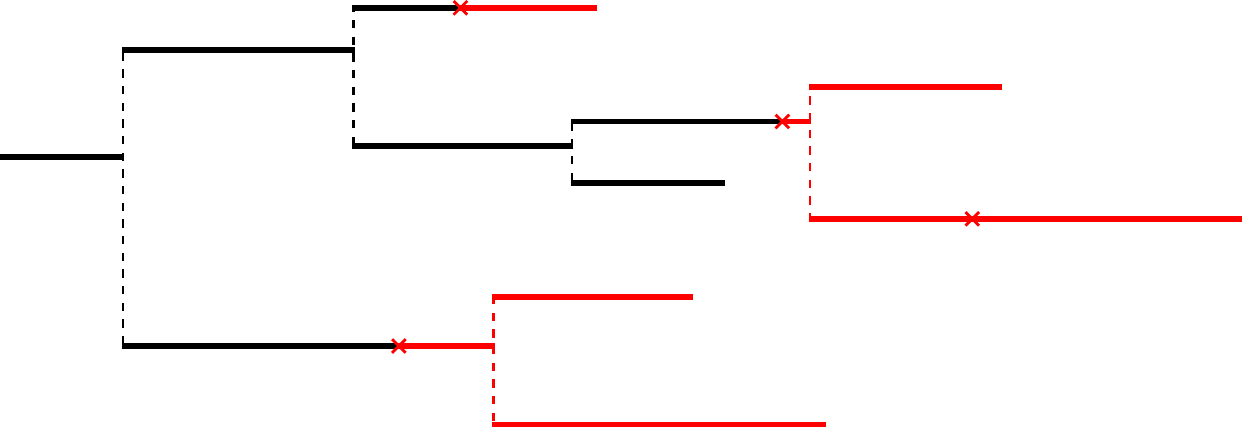}
\caption{Simple tree with mutations} \label{fig_arbre_mut}
\end{figure}

\subsection{Comb Function} \label{sect_constr_CPP}
\subsubsection{Definition}
We now introduce ultrametric trees, using a construction with comb functions following Lambert and Uribe Bravo \citep{lambert_comb_2016}.
%
%Let $(\mathbb{T}, d, \rho)$ be a rooted real tree.
%Consider the sphere of center $\rho$ and radius $a>0$,
%\[\mathbb{T}^{\{a\}} := \{x \in \mathbb{T}, d(x, \rho) = a\},\]
%equipped with the distance induced by $d$.
%Since $\mathbb{T}$ is a real tree, the four points condition is satisfied (Proposition \ref{prop_4pts}), which implies, for $x,y,z\in \mathbb{T}^{\{a\}}$ :
%\[ d(x,y) + d(z,\rho) \leq \max(d(x,z)+d(y,\rho), d(y,z)+d(x,\rho)). \]
%However by definition, $d(x,\rho)= d(y,\rho)= d(z,\rho)=a$, so the equation simplifies:
%\[ d(x,y) \leq \max(d(x,z), d(y,z)). \]
%$(\mathbb{T}^{\{a\}}, d)$ is then said to be an ultrametric space.
%In fact, every compact ultrametric space can be seen as the sphere of a tree.
%Indeed, such a space can always be represented by a comb function, and these representations point out a tree structure.

\begin{definition} \label{def_comb}
Let $T>0$ and $I=[0,T]$. Let also $f:I\to \Rplus$ such that
\[ \#\{x \in I, \;f(x) > \epsilon \} < \infty \qquad \epsilon > 0.\]
%In other words, $f$ can be written $f = \sum_n a_n \textbf{1}_{\{x_n\}}$, with $(x_n)$ a injective sequence of numbers in $I$, and $(a_n)$ a sequence of positive real numbers such that $a_n \to 0$ when $n\to \infty$.
The pair $(f,I)$ will be called a \textbf{comb function}. For any real number $z > \max_I f$, we define the \textbf{ultrametric tree of height $z$ associated with $(f,I)$} as the real rooted tree $T_f$ which is the completion of $(\text{Sk}, \rho, d_f)$, where $\text{Sk}\subset I\times \Rplus$ is the \textbf{skeleton} of the tree, and $\text{Sk}$, $\rho$ and $d_f$ are defined by
\begin{gather*}
\rho := (0,z),\\
\text{Sk} := \{0\}\times(0,z] \cup \{(t,y) \in I\times(0,z], \, f(t) > y\},\\
d_f : \begin{cases}
	\qquad \text{Sk}^2 & \xrightarrow{\qquad}  \Rplus \vspace*{5pt} \\ 
	\;((t,x),(s,y)) & \xmapsto{\qquad}
		{\begin{cases}
			\;|\max_{(t,s]}f - x| +  |\max_{(t,s]}f - y| \quad & \text{if } t<s,\\
			\;|x-y| & \text{if } t=s.
		\end{cases}}
\end{cases}
\end{gather*}
The set $\{0\} \times (0, z] \subset \text{Sk}$ is called the \textbf{origin branch of the tree}.
\item
For $t \in I, \, t > 0$, we call the \textbf{lineage of $t$} the subset of the tree $L_t \subset T_f$ defined as the closure of the set
\[\{(s, x) \in \text{Sk}, \; s \leq t, \, \forall s < u \leq t, \, f(u) \leq x \}. \]
For $t = 0$ one can define $L_{0}$ as the closure of the origin branch.

\end{definition}

\begin{remark}
One can check that $d_f$ is a distance which makes $(\text{Sk}, d_f)$ a real tree, and so its completion $(T_f, d_f)$ also is a real tree.
Furthermore, the fact that $\{ f > \epsilon \}$ is finite for all $\epsilon > 0$ ensures that it is a simple tree, since the branching points in $\text{Sk}$ are the points $(t, f(t))$ with $f(t)>0$.
For a visual representation of the tree associated with a comb function, see Figure \ref{fig_peigne}, where the skeleton is drawn in vertical segments and the dashed horizontal segments represent branching points.
\end{remark}

\begin{figure}[ht]
\centering
\def\svgwidth{0.5\textwidth}
% !TeX encoding = UTF-8
%% Creator: Inkscape inkscape 0.91, www.inkscape.org
%% PDF/EPS/PS + LaTeX output extension by Johan Engelen, 2010
%% Accompanies image file '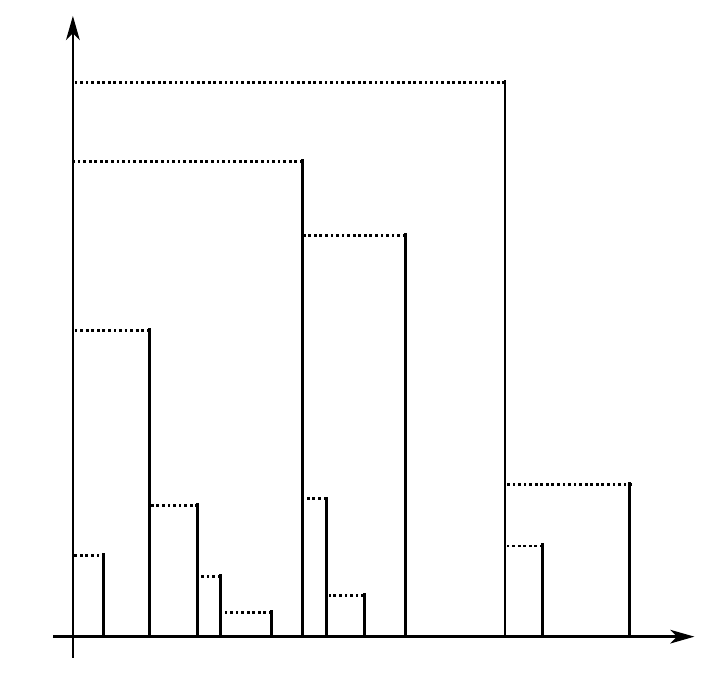' (pdf, eps, ps)
%%
%% To include the image in your LaTeX document, write
%%   \input{<filename>.pdf_tex}
%%  instead of
%%   \includegraphics{<filename>.pdf}
%% To scale the image, write
%%   \def\svgwidth{<desired width>}
%%   \input{<filename>.pdf_tex}
%%  instead of
%%   \includegraphics[width=<desired width>]{<filename>.pdf}
%%
%% Images with a different path to the parent latex file can
%% be accessed with the `import' package (which may need to be
%% installed) using
%%   \usepackage{import}
%% in the preamble, and then including the image with
%%   \import{<path to file>}{<filename>.pdf_tex}
%% Alternatively, one can specify
%%   \graphicspath{{<path to file>/}}
%% 
%% For more information, please see info/svg-inkscape on CTAN:
%%   http://tug.ctan.org/tex-archive/info/svg-inkscape
%%
\begingroup%
  \makeatletter%
  \providecommand\color[2][]{%
    \errmessage{(Inkscape) Color is used for the text in Inkscape, but the package 'color.sty' is not loaded}%
    \renewcommand\color[2][]{}%
  }%
  \providecommand\transparent[1]{%
    \errmessage{(Inkscape) Transparency is used (non-zero) for the text in Inkscape, but the package 'transparent.sty' is not loaded}%
    \renewcommand\transparent[1]{}%
  }%
  \providecommand\rotatebox[2]{#2}%
  \ifx\svgwidth\undefined%
    \setlength{\unitlength}{203.05829146bp}%
    \ifx\svgscale\undefined%
      \relax%
    \else%
      \setlength{\unitlength}{\unitlength * \real{\svgscale}}%
    \fi%
  \else%
    \setlength{\unitlength}{\svgwidth}%
  \fi%
  \global\let\svgwidth\undefined%
  \global\let\svgscale\undefined%
  \makeatother%
  \begin{picture}(1,0.96668868)%
    \put(0,0){\includegraphics[width=\unitlength,page=1]{arbre_peigne.pdf}}%
    \put(0.96610999,0.00984944){\color[rgb]{0,0,0}\makebox(0,0)[lb]{\smash{$t$}}}%
    \put(0.00280956,0.92336289){\color[rgb]{0,0,0}\makebox(0,0)[lb]{\smash{$f(t)$}}}%
    \put(0.064412,0.01299976){\color[rgb]{0,0,0}\makebox(0,0)[lb]{\smash{$0$}}}%
  \end{picture}%
\endgroup%
\caption{Comb function and its associated tree.} \label{fig_peigne}
\end{figure}

\begin{prop} \label{prop_alpha_mes}
With the same notation as in Definition \ref{def_comb}, for a fixed comb function $(f, I)$ and a real number $z> \max_I f$, writing $T_f$ for the associated real tree, the following holds.
For each $t \in I$, there is a unique leaf $\alpha_t \in T_f$ such that
\[L_t = [\![\rho, \alpha_t]\!].\]
Furthermore, the map $\alpha : t \mapsto \alpha_t$ is measurable with respect to the Borel sets of $I$ and $T_f$.
%Thus the tree $T_f$ is canonically equipped with a measure that concentrates on the set of its leaves
\end{prop}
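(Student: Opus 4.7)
The plan is to realize $\alpha_t$ as the unique point of $L_t$ at distance $z$ from $\rho$, and then to prove measurability via a c\`adl\`ag argument.

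\emph{Structure of $L_t\cap\mathrm{Sk}$.} I would first observe that any $(s,x)\in\mathrm{Sk}$ satisfies $d_f(\rho,(s,x)) = z-x$, and that unpacking the definition (for $t>0$) gives
\[ L_t\cap\mathrm{Sk} \;=\; \bigl\{(s,x)\in\mathrm{Sk}\colon s\le t,\ \max\nolimits_{(s,t]}f \le x\bigr\}, \]
with the convention that the max condition is vacuous when $s=t$. The case $t=0$ is immediate: $L_0$ is the closure of the origin branch and $\alpha_0=(0,0)$.

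\emph{Height bijection.} I would then show that the height map $(s,x)\mapsto x$ is an isometric bijection from $L_t\cap\mathrm{Sk}$ onto $(0,z]$. Injectivity: if $(s_1,y),(s_2,y)\in L_t\cap\mathrm{Sk}$ with $s_1<s_2\le t$, then $(s_2,y)\in\mathrm{Sk}$ requires $f(s_2)>y$, whereas the lineage condition for $s_1$ gives $f(s_2)\le\max_{(s_1,t]}f\le y$, a contradiction. Surjectivity: for $y\in(0,z]$ set
\[ s_y := \max\bigl(\{0\}\cup\{u\in(0,t]\colon f(u)>y\}\bigr), \]
which is attained because the comb condition makes $\{f>y\}$ finite; by maximality, $\max_{(s_y,t]}f\le y$, so $(s_y,y)\in L_t\cap\mathrm{Sk}$. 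Isometric property: for two such points with $s_1<s_2$, one checks $\max_{(s_1,s_2]}f\le x_1$ while $\max_{(s_1,s_2]}f\ge f(s_2)>x_2$, so the formula for $d_f$ collapses to $x_1-x_2=|x_1-x_2|$.

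\emph{Extraction of $\alpha_t$ and uniqueness.} Passing to the closure in the complete tree $T_f$, it follows that $L_t$ is isometric to $[0,z]$. I would define $\alpha_t$ as the unique point of $L_t$ at distance $z$ from $\rho$; uniqueness of $\alpha_t$ and the identity $L_t=[\![\rho,\alpha_t]\!]$ are then immediate from the defining property of the geodesic in a real tree. Moreover $\alpha_t$ is a leaf: skeleton points all sit at distance $<z$ from $\rho$, and the branching points added by the completion are of the form $(u,f(u))$ with $f(u)>0$, hence also at distance $<z$; the only points of $T_f$ at distance exactly $z$ from $\rho$ are therefore leaves.

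\emph{Measurability.} I would show that $\alpha$ is c\`adl\`ag from $I$ into $(T_f,d_f)$. Computing the most recent common ancestor of $\alpha_t$ and $\alpha_{t'}$ via the parametrization above, one finds that it lies at height $\max_{(t\wedge t',t\vee t']}f$, yielding the key identity
\[ d_f(\alpha_t,\alpha_{t'}) \;=\; 2\,\max\nolimits_{(t\wedge t',\,t\vee t']}f \qquad (t\ne t'). \]
For $t_n\downarrow t$, the right-hand side tends to $0$ since, for every $\epsilon>0$, the finite set $\{f>\epsilon\}$ does not accumulate at $t$ from the right; so $\alpha$ is right-continuous. For $t_n\uparrow t$, the same finiteness applied to the intervals $(t_n,t_m]$ shows that $(\alpha_{t_n})_n$ is Cauchy and hence convergent in the complete tree $T_f$, so left limits exist. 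Since $(T_f,d_f)$ is separable, a c\`adl\`ag map into it is automatically Borel measurable.

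The chief technical point will be the staircase construction in the height bijection, specifically surjectivity: the comb condition is essential both to make $\max_{(0,t]}f$ attained and to provide the rightmost spike exceeding any prescribed height, without which gaps in heights could appear. Checking that $\alpha_t$ is a leaf in the subtle case $f(t)=0$ with spikes accumulating at $t$ from the left is a minor additional point, but reduces cleanly to the above characterization of points of $T_f$ at maximal distance from $\rho$.
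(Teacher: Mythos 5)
Your proof is correct and takes essentially the same route as the paper's: your height bijection built from the last-exceedance times $s_y$ is exactly the inverse of the paper's staircase isometry $\phi_t$ (the paper's $(t_i,x_i)$ recursion), and your measurability argument rests on the same two ingredients the paper uses, namely the identity $d_f(\alpha_t,\alpha_{t'})=2\max_{(t\wedge t',t\vee t']}f$ and right-continuity via the comb condition. The one step to tighten is the leaf property: ``not a skeleton point and not a branching point'' does not by itself exclude degree $2$, so either argue as the paper does --- since $\alpha_t\in T_f\setminus\mathrm{Sk}$ and $\mathrm{Sk}$ is connected and dense in $T_f\setminus\{\alpha_t\}$, that set is connected, hence $\alpha_t$ has degree $1$ --- or complete your distance argument by noting that density of $\mathrm{Sk}$ forces every point of $T_f$ to lie within distance $z$ of $\rho$, so a point of degree $\geq 2$ at distance exactly $z$ would yield points at distance $>z$, a contradiction.
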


\begin{proof}
For $t=0$, $L_0$ is defined as the closure of the origin branch $\{0\}\times (0,z]$.
Since $d_f((0, x), \, (0, y)) = |x-y|$, the map
\[\phi_{0} : \begin{cases}
	(0, z] &\longrightarrow  \text{Sk}  \\
	x &\longmapsto (0, x)
\end{cases} \]
is an isometry, and since $T_f$ is defined as the completion of the skeleton Sk, there is a unique isometry $\widetilde \phi_{0} : [0,z] \to T_f$ which extends $\phi_{0}$.
Therefore we define $\alpha_{0} := \widetilde \phi_{0}(0) \in T_f$, which satisfies $L_{0} = [\![\rho, \alpha_{0}]\!]$ since $\widetilde \phi_0$ is an isometry.
Also $\alpha_{0}$ is a leaf of $T_f$ because it is in $T_f \setminus\text{Sk}$.
Indeed, since $T_f$ is the completion of Sk which is connected, $T_f \setminus \{\alpha_{0} \}$ is necessarily also connected, which means that $\alpha_{0}$ has degree $1$.

Now for a fixed $t \in I, \, t>0$, write $(t_i, x_i)_{i \geq 0}$ for the (finite or infinite) sequence with values in
\[\{(0, z)\}\cup\{(s, x) \in I\times (0, \infty), \; f(s) = x \}\]
defined inductively (as long as they can be defined) by $(t_0,x_0) = (0,z)$ and
\begin{align*}
\forall i \geq 0, \quad & x_{i+1} := \max_{(t_i, t]} f ,\\
\text{and } & t_{i+1} := \max \{ s \in (t_i, t], \; f(s) = x_{i+1} \}.
\end{align*}
\begin{itemize}
\item 
If the sequence $(t_i , x_i)_{i\geq 0}$ is well defined for all $i\geq 0$, then since $f$ is a comb function, we necessarily have that $x_i \to 0$ as $i\to\infty $.
\item
On the other hand, the sequence $(t_i , x_i)_{0\leq i \leq n}$ is finite if and only if it is defined up to an index $n$ such that either $t_n = t$ or $f$ is zero on the interval $(t_n, t]$.
In that case, we still define for convenience $x_{n+i} := 0$, $t_{n+i} := t_n$ for all $i \geq 1$.
\end{itemize}
Now it can be checked that we have
\[\bigcup_{i=0}^\infty [x_{i+1}, x_i) \setminus \{0\} = (0,z),\]
and that $L_t$ is defined as the closure of the set
\[ A_t := \bigcup_{i=0}^\infty \{t_{i}\} \times \left ([x_{i+1}, x_{i})\setminus \{0\}\right ) \subset \text{Sk}.\]
Also, by definition of the sequence $(t_i , x_i)_{0\leq i}$, the distance $d_f$ satisfies, for\\$(s,x),(u,y)\in A_t$,
\[d_f((s,x),(u,y)) = |x-y|.\]
Therefore the following map is an isometry (and it is well defined because $x_i \downarrow 0$).
\[\phi_t : \begin{cases}
	(0, z) &\longrightarrow  \text{Sk} \\
	x &\longmapsto (t_i, x) \quad \text{if } x\in [x_{i+1}, x_{i}) \text{ for an index } i \geq 0.
\end{cases} \]
As in the case $t = 0$, this isometry can be extended to $\widetilde \phi_t : [0,z] \to T_f$ and we define $\alpha_t := \widetilde \phi_t(0)$.
It is a leaf of $T_f$ satisfying $L_{t} = [\![\rho, \alpha_{t}]\!]$ for the same reasons as for $0$.

It remains to prove that $\alpha : t \mapsto \alpha_t$ is measurable.
It is enough to show that it is right-continuous, because in that case the pre-image of an open set is necessarily a countable union of right-open intervals, which is a Borel set.
Now for $t < t' \in I$, by taking limits along the lineages $L_t$ and $L_{t'}$, it is easily checked that the distance between $\alpha_t$ and $\alpha_{t'}$ can be written
\[ d_f(\alpha_t, \alpha_{t'}) = 2 \max_{(t,t']} f,\]
and since $f$ is a comb function, necessarily we have 
\[\max_{(t,t']} f \xrightarrow[t' \downarrow \, t]{ } 0.\]
Hence $\alpha$ is right-continuous, therefore measurable.
\end{proof}

It follows from Proposition \ref{prop_alpha_mes} that the Lebesgue measure $\lambda$ on the real interval $I$ can be transported by the map $\alpha$ to a measure on the tree $T_f$, or more precisely on its boundary, that is the set of its leaves.

\begin{definition}
With the same notation as in Definition \ref{def_comb} and Proposition \ref{prop_alpha_mes}, for any fixed comb function $(f, I)$ and $z> \max_I f$, writing $T_f$ for the associated real tree, we define the \textbf{measure on the boundary of $T_f$} as the measure
\[\Leb:=\lambda \circ \alpha^{-1}\]
which concentrates on the leaves of the tree.
%By a slight abuse, we still note the measure $\Leb$, even when considering its image by the map $\alpha$.
From now on, we always consider the tree $T_f$ associated with a comb function $f$ as a rooted real tree equipped with the measure $\Leb$ on its boundary.
\end{definition}

\subsubsection{The Coalescent Point Process}

Here we will consider the measured tree associated to a random comb function.
Let $\nu$ be a positive measure on $(0,\infty]$ such that for all $\epsilon > 0$, we have 
\[\overline{\nu}(\epsilon) := \nu ([\epsilon, \infty]) < \infty,
\]
 and $\mathcal{N}$ be the support of the Poisson point process on $\Rplus\times (0,\infty]$ with intensity $\dd t \otimes \nu$.
Then we can define $f^{\mathcal{N}}$ as the function whose graph %(without points in $[0,\infty) \times \{0\}$)
is  $\mathcal{N}$.
\[f^\mathcal{N}(t) = \begin{cases}
x \quad & \text{ if } (t,x) \in \mathcal{N},\\
0 & \text{ if } \mathcal{N} \cap (\{t\}\times (0, \infty]) = \emptyset.
\end{cases}\]
Now fix $z>0$ such that $\overline\nu(z) > 0$ and set 
\[T(z) := \inf \{ t \geq 0, f^\mathcal{N}(t) \geq z \}.
\]

\begin{definition}
The ultrametric random tree associated to $I = [0, T(z))$ and $f^\mathcal{N}_{|I}$ is called \textbf{coalescent point process (CPP)} of intensity $\nu$ and height $z$, denoted by CPP($\nu,z$).
It is equipped with the random measure $\Leb$, concentrated on the leaves, which is the push-forward of the Lebesgue measure on $[0,T(z))$ by the map $\alpha$.
\end{definition}

Note that a coalescent point process is not directly related to coalescent theory, a canonical example of which is Kingman's coalescent \citep{kingman_coalescent_1982}, although there exist links between the two: it is shown in \citep{lambert_recovering_2016} that a CPP appears as a scaling limit of the genealogy of individuals having a very recent common ancestor in the Kingman coalescent.

Formally, a CPP is a %random measured real tree which should be defined on a correct measurable space. It is possible to show that it is indeed a 
random variable valued in the space of  finitely measured compact metric spaces endowed with the Gromov-Hausdorff-Prokhorov distance defined in \citep{abraham_note_2013} as an extension of the more classical Gromov-Hausdorff distance.
%However, since we are dealing with simple trees, it is enough for our purpose to consider a CPP as a random variable in the space of simple trees with a measure on their boundary.
%See Appendix ?? for a justification that everything we consider is measurable.
Actually, it is easy to check that all the random quantities we handle are measurable, since we are dealing with a construction from a Poisson point process.

%The following Lemma shows that the construction of a CPP from a random point process is measurable.
%
%\begin{lemma}
%Let $E$ be the set of Radon point measures $\mathcal{N}$ in $\Rplus\times (0,\infty]$ for which two different points have a different first coordinate, and let $\mathcal{E}$ be the $\sigma$-algebra of $E$ generated by the applications
%\[\mathcal{N} \mapsto \mathcal{N}(B), \]
%for $B$ a Borel set of $\Rplus\times (0,\infty]$.
%............
%\end{lemma}

\subsection{Mutations on a CPP} \label{sect_constr_mutations}

Here we set up how mutations appear on the random genealogy associated with a CPP of intensity $\nu$.
Let $\mu$ be a positive measure on $\Rplus$.
We make the following assumptions:
\begin{equation} \label{hyp_mu_nu} \tag{H}
\begin{gathered}
\forall x > 0, \qquad 0 < \overline{\nu}(x) := \nu ([x, \infty]) < \infty \; \text{ and } \; \underline{\mu}(x) := \mu([0,x]) < \infty, \\
{\mu}([0,\infty)) = \infty, \\
\nu \text{ and } \mu \text{ have no atom on } \Rplus.
\end{gathered}
\end{equation}
We will now define the CPP of intensity $\nu$ and height $z>0$ marked with rate $\mu$.

Recall that the CPP is constructed from the support $\mathcal{N}$ of a Poisson point process with intensity $\dd t \otimes \nu$ on $\Rplus \times [0, \infty]$ and has a root $\rho = (0,z)$.
Define independently for each point $N := (t,x)$ of $\mathcal{N}\cup\{\rho\}$ the Poisson point process $M_N$ of intensity $\mu$ on the interval $(0,x)$.
Each atom $y\in[0,x]$ of $M_N$ is a mark $(t,y)$ on the branch $\{t\}\times(0, x)\subset\text{Sk}$ at height $y$.
The family $(M_N)_{N\in\mathcal{N}}$ therefore defines a point process $M$ on the skeleton of the CPP tree:
\[M := \sum_{(t,x) \in \mathcal{N}\cup\{\rho\}} \sum_{y \in M_{(t,x)}} \delta_{(t,y)}.\]
By definition, conditional on Sk, $M$ is a Poisson point process on Sk whose intensity is such that for all non-negative real numbers $t$ and $a<b$, we have:
\[\E \Big [\, M(\{t\}\times[a,b]) \; \Big | \; \{t\}\times[a,b] \subset \text{Sk} \,\Big ] = \mu([a,b]).\]

\begin{definition} \label{def_clonal_boundary}
  Let $\nu, \mu$ be measures satisfying assumption \eqref{hyp_mu_nu}.
  A \textbf{coalescent point process with intensity $\nu$, mutation rate $\mu$ and height $z$}, denoted CPP($\nu, \mu, z$), is defined as the random CPP($\nu, z$) given by $\mathcal{N}$, equipped with the point process $M$ on its skeleton.
  \begin{enumerate}[(i)]
    \item The \textbf{clonal subtree} of the rooted real tree $(\mathbb{T}, \rho)$ equipped with mutations $M$ is defined as the subset of $\mathbb{T}$ formed by the points :
    \[ \{ x \in \mathbb{T}, M([\![\rho, x]\!]) = 0 \}.\]
    Equipped with the distance induced by $d$, this is also a real tree.
    \item Given the (ultrametric) rooted real tree $(\mathbb{T}, \rho)$ equipped with mutations $M$ and the application $\alpha$ from the real interval $I=[0,T(z))$ to $\mathbb{T}$ whose range is included in the leaves of $\mathbb{T}$, we can define the \textbf{clonal boundary} (or \textbf{clonal population}) $R = R(\mathbb{T}, M, \alpha) \subset I$:
    \[R := \{ t \in I, M([\![\rho,\alpha_t]\!]) = 0)\}.\] 
  \end{enumerate}
\end{definition}
%
%For instance, for each $t \in \Rplus $ (identified to a leaf of the tree), the set of mutations that $t$ carries on the part of its lineage separated from the origin branch is easily defined as:
%\[M_t := \sum_{i\geq 1} M_{N_i}(\point\cap (x_{i+1}, x_{i}]),\]
%where $N_i = (t_i, x_i)$ is defined as the $i$-th point of $\mathcal{N}$ in the lineage of $t$, that is:
%\[x_1 = \max \{ x \in \Rplus,\, (s,x) \in \mathcal{N},\, 0 < s \leq t\}, \]
%\[x_{i+1} = \max \{ x \in \Rplus,\, (s,x) \in \mathcal{N},\, t_i < s \leq t\}.\]
%The random point process $M_t$ defined above is therefore a Poisson point process of intensity $\mu$ on $[0,x_1]$.
%These mutations are compatible with the tree structure: for $t<s \in \Rplus$ coalescing at height $x>0$, we have $(M_t)_{|(x, \infty)} = (M_s)_{|(x, \infty)}$, and $(M_t)_{|[0,x]}$ independent of $(M_s)_{|[0,x]}$.
%
%We have thus constructed (jointly for all $z$ such that $\overline{\nu}(z) < \infty$) a tree $\mathbb{T}^z$ with distribution CPP($\nu,z$), equipped with $M$ a Poisson point process on its skeleton.
%From this construction, one can define the \textbf{clonal tree} $A^{z}_{\mu}$ as the subtree of $\mathbb{T}^z$ formed by the points $x$ such that the branch $[\![\rho, x]\!]$ linking them to the root carries no mutation:
%\[A^{z}_{\mu} := \{ x \in \mathbb{T}^z, M([\![\rho, x]\!]) = 0 \}. \]
%We will also consider the set $R_\mu$ of leaves of $\mathbb{T}^z$ which carry no mutation, and the \textbf{reduced clonal tree}, which is the subtree spanned from the set $R_\mu$:
%\[R_\mu := \{ t \in \Rplus, M_t(\Rplus) = 0)\}. \]

\begin{remark}
  This set $R$ is studied in a paper by Philippe Marchal \citep{marchal_nested_2004} for a CPP with $\nu(dx) = \frac{\dd x}{x^2}$ and mutations at branching points with probability $1-\beta$.
  In that case the sets $R_\beta$ have the same distribution as the range of a $\beta$-stable subordinator.
  In the present case of Poissonian mutations, $R$ is not stable any longer but we will see in Section \ref{section_pop_clonale} that it remains a regenerative set.
\end{remark}

\paragraph{Total number of mutations.}
%Our first question about the distribution of a CPP with mutations is a simple consideration but points out some difficulties that appear with this construction.
Since $\mu$ is a locally finite measure on $\Rplus$, the number of mutations on a fixed lineage of the CPP($\nu, \mu, z$) is a Poisson random variable with parameter $\mu([0,z])<\infty$, and so is a.s.\ finite.
However, it is possible that in a \textbf{clade} (here defined as the union of all lineages descending from a fixed point), there are infinitely many mutations with probability $1$.
For instance, if $\mu$
is the Lebesgue measure and if $\nu$ is such that 
\[\int_0 x \nu(\dd x) = \infty,\]
we know from the properties of Poisson point processes %(see Proposition \ref{prop_pois_somme_f} ??) 
that the total length of any clade is a.s.\ infinite.
In this case, the number of mutations in any clade is also a.s.\ infinite so that each point $x$ in the skeleton of the tree has a.s.\ at least one descending lineage with infinitely many mutations. Such a lineage can be displayed by choosing iteratively at each branching point a sub-clade with infinitely many mutations.
%From any point $x$ in the skeleton of the tree, one can choose a mutation in the descending subtree, and iterating that procedure leads to finding an lineage having ancestor $x$ carrying an infinity of mutations.
%As this is true for any $x$, the set of these pathological lineages is almost surely dense in the set of lineages.

One can ask under which conditions this phenomenon occurs. Conditional on the tree of height $z$, the total number of mutations follows a Poisson distribution with parameter
\[\Lambda := \underline{\mu}(z) + \sum_{(t,y)\in \mathcal{N}, t < T(z)} \underline{\mu}(y),\]
where $T(z)$ is the first time such that there is a point of $\mathcal{N}$ with height larger than $z$.
Indeed, the origin branch is of height $z$ and the heights of the other branches are the heights of points of $\mathcal{N}$.
This number of mutations is finite \textit{a.s}.\ on the event $A:=\{\Lambda<\infty\}$ and infinite \textit{a.s}.\ on its complement.
But by the properties of Poisson point processes, two cases are distinguished: either $A$ has probability $0$ or it has probability $1$.
\begin{prop}
\label{prop:total-nb-mut}
There is the following dichotomy:
  \begin{align*}
  \int_0 \underline{\mu}(x)\nu(\dd x) < \infty \quad & \Longrightarrow \quad  \text{ the total number of mutations is finite} \textit{ a.s.}\\
  \int_0 \underline{\mu}(x)\nu(\dd x) = \infty \quad & \Longrightarrow \quad \text{ the number of mutations in any clade is infinite} \textit{ a.s.}
  \end{align*}
  In the former case, the total number of mutations has mean
  \[\E[\Lambda] = \underline{\mu}(z) + \frac{1}{\overline{\nu}(z)}\int_{[0,z]} \underline{\mu}(x) \nu(\dd x). \]
\end{prop}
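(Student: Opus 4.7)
My plan is to condition on the tree and reduce the question to a standard dichotomy for Poisson integrals. Given the skeleton $\text{Sk}$ (equivalently, given $\mathcal{N}$ and $T(z)$), the restrictions of $M$ to distinct branches are independent Poisson processes with intensity $\mu$, so the total mutation count is Poisson-distributed with the random parameter $\Lambda$ displayed in the statement (the first term for the origin branch of height $z$, one term per other branch, of height $y$, attached at $(t,y)\in\mathcal{N}$ with $t<T(z)$). Therefore the total is a.s.\ finite if and only if $\Lambda<\infty$ a.s., and a.s.\ infinite otherwise.

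Next I would split $\mathcal{N}=\mathcal{N}_1\sqcup\mathcal{N}_2$ along the height-threshold $z$. These two Poisson point processes are independent, and $T(z)$, being a functional of $\mathcal{N}_1$ alone, is $\mathrm{Exp}(\overline{\nu}(z))$-distributed and independent of $\mathcal{N}_2$. Conditional on $T(z)=\tau>0$, the random variable $\Lambda-\underline{\mu}(z)$ is a Poisson integral of $\underline{\mu}(y)$ against the finite Poisson measure $\mathcal{N}_2|_{[0,\tau)\times(0,z)}$, whose intensity is $\tau\,\nu|_{(0,z)}$. The standard criterion for such integrals---easily read off from the Laplace functional $\E[\e^{-\int f\,\dd\xi}]=\exp(-\int(1-\e^{-f})\,\dd m)$ together with the elementary bounds $(1-\e^{-1})(u\wedge 1)\leq 1-\e^{-u}\leq u\wedge 1$---asserts that $\int f\,\dd\xi<\infty$ a.s.\ if and only if $\int(f\wedge 1)\,\dd m<\infty$. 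Since $\underline{\mu}\leq\underline{\mu}(z)<\infty$ on $(0,z)$, the function $\underline{\mu}\wedge 1$ is comparable to $\underline{\mu}$ up to a constant on that interval, so this condition is equivalent to $\int_{(0,z)}\underline{\mu}\,\dd\nu<\infty$, which in turn is equivalent to $\int_0\underline{\mu}\,\dd\nu<\infty$ since the integral away from $0$ is always finite. This establishes the dichotomy.

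In the convergent case, Campbell's formula gives
\[\E[\Lambda]=\underline{\mu}(z)+\E[T(z)]\int_{(0,z)}\underline{\mu}(y)\,\nu(\dd y)=\underline{\mu}(z)+\frac{1}{\overline{\nu}(z)}\int_{[0,z]}\underline{\mu}(y)\,\nu(\dd y),\]
the boundary values of the last integral being irrelevant as $\nu$ has no atom. For the remaining assertion in the divergent case, the same Poisson-integral criterion applies to the clade descending from any branching point $(t_0,y_0)\in\mathcal{N}$: such a clade has positive time-duration and root-height $y_0>0$, so its internal Poisson configuration of atoms has intensity proportional to $\nu|_{(0,y_0)}$, and $\int_0\underline{\mu}\,\dd\nu=\infty$ again forces a.s.\ infinitely many mutations in it. A countable intersection over the (countable) branching points then yields the simultaneous statement. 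I do not anticipate a serious difficulty; the only delicate step is the comparability of $\underline{\mu}\wedge 1$ with $\underline{\mu}$ on $(0,z)$, which is what identifies the Poisson-integral criterion with the divergence condition stated in the proposition.
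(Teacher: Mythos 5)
Your proposal is correct and follows essentially the same route as the paper: condition on the tree so that the total count is Poisson with parameter $\Lambda$, apply the standard Poisson-integral criterion (finiteness iff $\int(\underline{\mu}\wedge 1)\,\dd\nu<\infty$), use the boundedness of $\underline{\mu}$ by $\underline{\mu}(z)$ on $(0,z)$ to identify this with $\int_0\underline{\mu}\,\dd\nu<\infty$, and compute the mean via Campbell's formula together with $\E[T(z)]=1/\overline{\nu}(z)$. Your explicit treatment of the ``any clade'' statement (applying the criterion to each branching-point clade and intersecting over this countable family) is in fact more detailed than the paper's proof, which leaves that part implicit.
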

%
%\begin{itemize}
%\item Soit $\int_0 \underline{\mu}(x)\nu(\dd x) < \infty $, et alors $\Lambda$ est fini \textit{p.s},
%\item Soit $\int_0 \underline{\mu}(x)\nu(\dd x) = \infty$, et alors $\Lambda$ est infini \textit{p.s}.
%\end{itemize}
\begin{proof}
  Conditional on $T(z)$, the set $\mathcal{N}' := \{ (t, y) \in \mathcal{N}, t<T(z)\}$ is the support of a Poisson point process on $[0, T(z)]\times[0,z]$ with intensity $\dd t\otimes \nu$.
  Therefore, from basic properties of Poisson point processes, conditional on $T(z)$, $\Lambda = \underline{\mu}(z) + \sum_{(t,y)\in \mathcal{N}'} \underline{\mu}(y)$ is finite \textit{a.s}.\ if and only if
  \[ \int_{0}^{T(z)}\left ( \int_{[0,z]} \left ( \underline{\mu}(x) \wedge 1 \right ) \nu(\dd x) \right ) \dd t < \infty \qquad \textit{a.s.,}\]
  and since $T(z)$ is finite \textit{a.s}.\ and $\underline{\mu}$ is increasing, this condition is equivalent to the condition of the proposition.
  Now let us write $N_{\text{tot}}$ for the total number of mutations.
  The conditional distribution of $N_{\text{tot}}$ given $\Lambda$ is a Poisson distribution with mean $\Lambda$.
  Therefore we deduce
  \begin{align*}
   \E[N_{\text{tot}}] &= \E[\Lambda]\\
   &= \underline{\mu}(z) + \E\left [\sum_{(t,y)\in \mathcal{N}'} \underline{\mu}(y) \right ]\\
   &= \underline{\mu}(z) + \E\left [ T(z) \int_{[0,z]} \underline{\mu}(x) \nu(\dd x) \right ]\\
   &= \underline{\mu}(z) + \frac{1}{\overline{\nu}(z)}\int_{[0,z]} \underline{\mu}(x) \nu(\dd x),
  \end{align*}
  which concludes the proof.
\end{proof}

\section{Allelic Partition at the Boundary} \label{section_pop_clonale}

In this section, we will identify the clonal boundary $R$ in a mutation-equipped CPP, that is the set of leaves of the tree which do not carry mutations, and characterize the reduced subtree generated by this set.

\subsection{Regenerative Set of the Clonal Lineages, Clonal CPP}

Denote by $\mathbb{T}^z$ a CPP($\nu,\mu,z$) where $\nu,\mu$ satisfy assumptions $\eqref{hyp_mu_nu}$.
A leaf of $\mathbb{T}^z$ is said \textbf{clonal} if it carries the same allele as the root.
Recall the canonical map $\alpha^z$ from the real interval $[0, T(z))$ to the leaves of $\mathbb{T}^z$ (see Proposition \ref{prop_alpha_mes}).
The \textbf{clonal boundary} (see Definition \ref{def_clonal_boundary}) of $\mathbb{T}^z$ is then the set $R^z\subset[0, T(z))$ defined as the pre-image of the clonal leaves by the map $\alpha^z$.

We define the event
\[ O^z := \{M_{\rho}([0, z]) = 0\}\]
that there is no mutation on the origin branch of $\mathbb{T}^z$. Note that this event has a positive probability  equal to $\e^{-\underline{\mu}(z)}$.
By definition, the point process of mutations on the origin branch $M_\rho$ is independent of $(M_N)_{N\in \mathcal{N}}$.
Therefore conditioning on $O^z$ amounts to considering the tree $\mathbb{T}^z$ equipped with the mutations on its skeleton which are given only by the point processes $(M_N)_{N\in \mathcal{N}}$.
We now define a random set $\widetilde{R}$, whose distribution depends only on $(\nu, \mu)$ and not on $z$, which will allow the characterization of the clonal boundaries $R^z$ conditional on the event $O^z$.

\begin{definition} \label{def_widetilde_R} Recall the notations $\mathcal{N}$ and $(M_N)_{N\in\mathcal{N}}$. 
For each fixed $t\in\Rplus$, let $(t_i, x_i)_{i\geq 1}$ be the (possibly finite) sequence of points of $\mathcal{N}$ such that
\begin{gather*}
x_1 = \sup \{ x \in [0, \infty], \; \#\mathcal{N} \cap (0,t]\times [x, \infty] \geq 1\},\\
t_1 = \sup \{s \in [0,t], \; (s,x_1) \in \mathcal{N} \},\\
x_{i+1} = \sup \{ x \in [0, x_i), \; \#\mathcal{N}\cap(t_i,t]\times [x, \infty] \geq 1\},\\
t_{i+1} = \sup \{s \in (t_i,t], \; (s,x_{i+1}) \in \mathcal{N} \},
\end{gather*}
with the convention $\sup \emptyset = 0$, and where the sequence is finite if there is a $n\geq 0$ such that $x_n = 0$.
We define the following random point measure on $\Rplus$:
\begin{equation*} \label{eq_def_m_t}
M_t := \sum_{i\geq 1, \; x_i > 0} M_{(t_i, x_i)}(\point \cap [x_{i+1}, x_i]).
\end{equation*}
Now we define the random set $\widetilde{R}$ as:
\[\widetilde{R} := \{t \in \Rplus, \; M_t(\Rplus) = 0\}.\]

\end{definition}

\begin{remark}
Recall that for a comb function $(f,I)$ and a real number $t\in I$, in the proof of Proposition \ref{prop_alpha_mes}, we defined a sequence $(t_i, x_i)_{i\geq 0}$ in the same way as in the previous definition and we remarked that the lineage $L_t$ of $t$ is the closure of the set
\[\bigcup_{i\geq 0, \, x_i > 0} \{t_{i}\} \times \left ([x_{i+1}, x_{i})\setminus \{0\}\right ) \subset \text{Sk}.\]
It follows that in the case of the tree $\mathbb{T}^z$ equipped with the mutations $M$ on its skeleton, we have the equality between events 
\[O^z \cap \{M([\![\rho, \alpha^z_t ]\!]) = 0\} = O^z \cap \{M_t(\Rplus) = 0\}.\]
Therefore, on the event $O^z$, the clonal boundary $R^z$ of the tree $\mathbb{T}^z$ coincides with the restriction of $\widetilde{R}$ to the interval $[0, T(z))$, which explains why we study the set $\widetilde{R}$.
\end{remark}

%
%For our current purpose, the height of the tree $z$ can actually be ignored and we can consider the whole process $\mathcal{N}$ as a forest.
%That is, we erase the origin branch and consider only the mutation processes $M_N$ (see Section \ref{sect_constr_mutations}) that appear on the vertical lines drawn from a point $N\in \mathcal{N}$ down.
%Then, each real number $t \in \R_+$ is equipped with a point process $M_t$ that gives the positions of the mutations it carries.
%For example, if we call $X$ the highest atom of $\mathcal{N}$ before $t$:
%\[X := \max\;\{0\} \cup \{ x \in \R_+, \; \exists s \in [0,t], \; (s,x) \in \mathcal{N}\}, \]
%then given $X$, $M_t$ is a Poisson point process with intensity $\mu$ on $[0,X]$.
%Now the process of point measures $(M_t)_{t\in \R_+}$ allows to define the set of the clonal lineages:
%\[\widetilde{R} := \{ t \geq 0, \; M_t(\R_+) = 0 \}.\]
%We claimed that the height can be ignored and the origin branch erased because knowing the distribution of $\mathcal{N}$ and $\widetilde{R}$, one can find the distribution of the clonal lineages in a CPP($\nu, z$) in the following way.
%An independent Poisson point process is created, with intensity $\mu$ on $[0,z]$, for the origin branch.
%The highest atom of this measure is defined to be $M$.
%Then the clonal lineages in a CPP($\nu, z$) is given by the intersection of $\widetilde{R}$ with the interval $[T(M), T(z)]$ given by two stopping times for $\mathcal{N}$.
%\medskip

The subtree of $\mathbb{T}^z$ spanned by the clonal boundary $R^z$ is called the \textbf{reduced clonal subtree} and defined as \[\bigcup_{t\in R^z} [\![\rho, \alpha^z_t]\!].\]
Note that it is a Borel subset of $\mathbb{T}^z$ because it is the closure of
\[ \bigcap_{n\geq 1} \bigcup_{p\geq n} \bigcup_{x \in C_p} [\![\rho, x]\!], \]
where $C_p$ is the finite set $\{x \in \mathbb{T}^z, \; d(x,\rho) = z(1-1/p), \, M([\![\rho, x]\!]) = 0\}$.
%It is different from the clonal subtree $A^z$ previously considered because here the tree is always ultrametric.%, that is we only consider the phylogeny regrouping the extant clonal individuals.
The set $\widetilde{R}$ is proven to be a regenerative set (see Appendix \ref{app_sub_reg} for the results used in this paper and the references concerning subordinators and regenerative sets), and the reduced clonal subtree is shown to have the law of a CPP.

\begin{theorem} \label{thm_cpp_clonal} The law of $\widetilde{R}$ and of the associated reduced clonal subtree can be characterized as follows.
    ~\begin{enumerate}[(i)] 
        \item Under the assumptions \eqref{hyp_mu_nu} and with the preceding notation the random set $\widetilde{R}$ is regenerative.
        It can be described as the range of a subordinator whose Laplace exponent $\phi$ is given by:
        \[\frac{1}{\phi(\lambda)} = \int_{(0, \infty)} \frac{\e^{-\underline{\mu}(x)}}{\lambda + \overline{\nu}(x)} \mu(\dd x).\]
        %where $\overline{\nu}(x) = \nu([x, \infty])$ and $\underline{\mu}(x) = \mu ([0, x])$.
        
        \item 
        The reduced clonal subtree, that is the subtree spanned by the set $\widetilde{R}$, has the distribution of a CPP with intensity $\nu^\mu$, where $\nu^\mu$ is the positive measure on $\R_+\cup\{\infty\}$ determined by the following equation.
        Letting $W(x) := (\overline{\nu}(x))^{-1}$ and $W^{\mu}(x) := (\overline{\nu^{\mu}}(x))^{-1}$, we have, for all $x>0$,
        \[ W^\mu(x) = W(0) + \int_0^x \e^{-\underline{\mu}(z)} \dd W (z). \]
    \end{enumerate}
\end{theorem}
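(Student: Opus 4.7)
The strategy proceeds in three steps: establishing the regenerative property of $\widetilde R$, identifying the Laplace exponent $\phi$ via the potential measure of the associated subordinator, and deducing the CPP structure of the reduced clonal subtree.

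Introduce the filtration $\mathcal F_t := \sigma(\mathcal{N} \cap [0, t] \times (0, \infty], (M_N)_{t_N \leq t})$, relative to which $\widetilde R \cap [0, t]$ is measurable. For any $(\mathcal F_t)$-stopping time $T$ satisfying $T \in \widetilde R$ a.s.\ on $\{T < \infty\}$, the key geometric observation is that for $s > T$, the lineage of $\alpha_s$ decomposes as: (a) the portion shared with the lineage of $\alpha_T$, going from the root down to the MRCA of $\alpha_T$ and $\alpha_s$ at height $H_{T,s} := \max_{(T,s]} f^{\mathcal{N}}$, and (b) a new portion from this MRCA down to $\alpha_s$, composed solely of branches in $(T, s] \times [0, H_{T,s}]$. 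Since $T \in \widetilde R$, portion (a) lies in $L_T$ and carries no mutations; hence $s \in \widetilde R$ iff portion (b) is mutation-free, iff $s - T$ belongs to the analog $\widetilde R'$ constructed from the shifted data $\mathcal{N}' := \{(u - T, x) : (u, x) \in \mathcal{N}, u > T\}$ and its mutations. By the strong Markov property of $\mathcal{N}$, $\widetilde R'$ is independent of $\mathcal F_T$ and distributed as $\widetilde R$. Maisonneuve's characterization then identifies $\widetilde R$ as the closure of the range of a (possibly killed) subordinator $\sigma$ with some Laplace exponent $\phi$.

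To compute $\phi$, I would identify the potential measure $U$ of $\sigma$, which satisfies $\int_0^\infty \e^{-\lambda t}\,U(\dd t) = 1/\phi(\lambda)$. Rewriting the target as
\[
\frac{1}{\phi(\lambda)} \;=\; \int_0^\infty \e^{-\lambda s}\,u(s)\,\dd s, \qquad u(s) := \int_0^\infty \e^{-\underline{\mu}(x) - \overline{\nu}(x)s}\,\mu(\dd x),
\]
reduces the problem to showing $U(\dd s) = u(s)\,\dd s$. This should follow from a Palm/Campbell--Mecke decomposition of the PPP of mutations on the skeleton: a sampled mutation at height $x$ on a branch (Palm intensity $\mu(\dd x)$) contributes the factor $\e^{-\underline{\mu}(x)}$ for the absence of other mutations below height $x$ on that branch, and $\e^{-\overline{\nu}(x)s}$ for the absence of a taller branch within a length-$s$ interval to the right---exactly the event that the sampled mutation governs the clonal structure at distance $s$ from a reference point of $\widetilde R$.

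For (ii), I would parametrize the leaves of the reduced clonal subtree by the local time $L$ at $\widetilde R$, which is the inverse of $\sigma$. Under this parametrization, the regenerative property yields a stationary Poisson structure for the branch heights, and a first-passage computation analogous to the one for $\phi$ (the law of the first branch of height $\geq x$ in a local-time interval) identifies the intensity as $\dd\ell \otimes \nu^\mu$, with $\nu^\mu$ characterized by $\dd W^\mu(x) = \e^{-\underline{\mu}(x)}\,\dd W(x)$. The main technical obstacle is the precise Palm computation in Step 2 (and its counterpart in Step 3): the combinatorial structure of the ``record'' sequence $(t_i, x_i)$ from Definition \ref{def_widetilde_R} must be reconciled with the conditional independence furnished by the PPP of mutations, so that the factors $\e^{-\underline{\mu}(x)}$ and $\e^{-\overline{\nu}(x) s}$ emerge as the Palm-weighted contributions from a single mutation event.
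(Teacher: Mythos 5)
Your proposal is correct and follows essentially the same route as the paper: the same filtration and shared-lineage observation for the regeneration property, the same reduction of the Laplace exponent to the renewal density $u(s)=\int_{(0,\infty)}\e^{-\underline{\mu}(x)-s\overline{\nu}(x)}\,\mu(\dd x)$ (the paper obtains it by writing $\PP(s\in\widetilde R)=\E[\e^{-\underline{\mu}(H_s)}]$ and integrating by parts, which is exactly what your Palm/lowest-mutation decomposition amounts to, since conditionally on $\mathcal{N}$ the mutations along a lineage form a Poisson process with intensity $\mu$ in the height coordinate), and the same local-time parametrization plus regeneration argument yielding the Poisson structure $\dd t\otimes\nu^\mu$ in part (ii). The only cosmetic difference is that the paper carries out your deferred ``technical obstacle'' by an elementary conditional-expectation and integration-by-parts computation rather than Palm calculus.
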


\begin{remark}
The last formula of the theorem is an extension of Proposition 3.1 in \citep{lambert_allelic_2009}, where the case when $\nu$ is a finite measure and $\mu(dx)= \theta \,\dd x$ is treated.
Here, we allow $\nu$ to have infinite mass and $\mu$ to take a more general form (provided \eqref{hyp_mu_nu} is satisfied).
\end{remark}

\begin{figure}[ht]
\centering
\includegraphics[width=\textwidth]{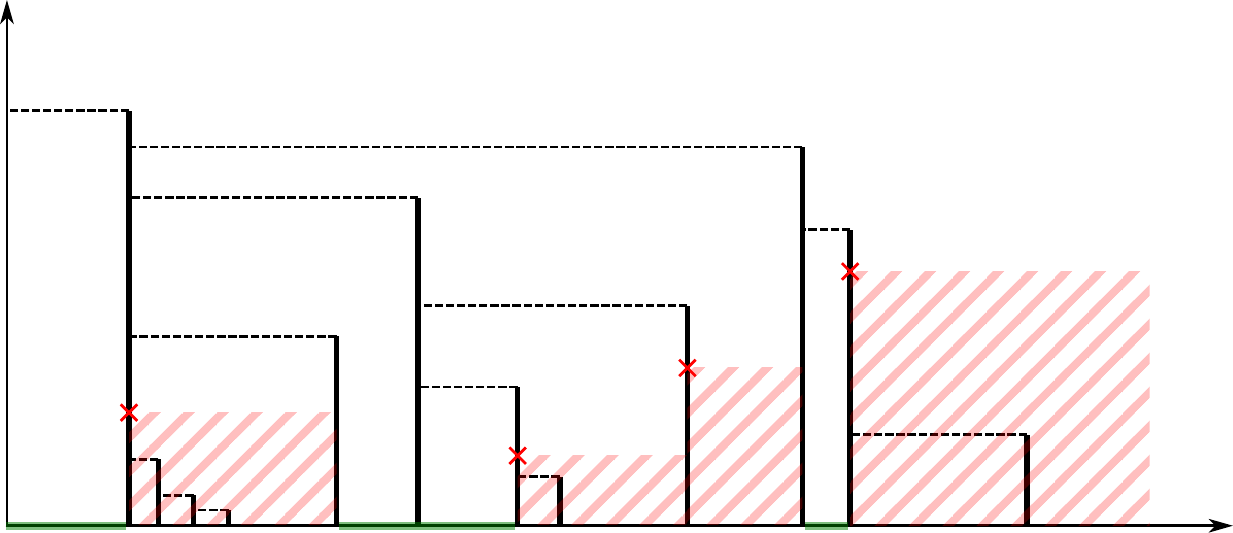}
\caption{Mutation-equipped CPP, regenerative set $\widetilde{R}$ shown in green}
\end{figure}

\paragraph{Regenerative set.}
Here, we prove the first part of the theorem concerning $\widetilde{R}$.
\begin{proof}[Proof of Theorem \ref{thm_cpp_clonal}, (i)]
Let $(\mathcal{F}_t)_{t\geq 0}$ be the natural filtration of the marked CPP defined by:
\[\mathcal{F}_t = \sigma\left (\mathcal{N}\cap([0,t]\times\R_+), M_{(s,x)}, s \leq t, x\geq 0 \right ).\]
To show first that $\widetilde{R}$ is $(\mathcal{F}_t)$-progressively measurable, we show that for a fixed $t>0$, the set
\[\{ (s, \omega)  \in [0,t]\times \Omega , \; s \in \widetilde{R}(\omega) \}\]
is in $\mathcal{B}([0,t])\otimes\mathcal{F}_t$.
Basic properties of Poisson point processes ensure there exists an $\mathcal{F}_t$-measurable sequence of random variables giving the coordinates of the mutations in $\mathcal{N}\cap([0,t]\times\R_+)$.
Let $(U_i, X_i)_i$ be such a sequence, for instance ranked such that $X_i$ is decreasing as in Figure \ref{fig_mutloc}.
We also define the following $\mathcal{F}_t$-measurable random variables:
\[T_i := t \wedge \inf\{s \geq U_i, (s,x) \in \mathcal{N}, x\geq X_i\}.\]
Now we have
\[\widetilde{R}\cap[0,t] = \bigcap_i ([0,t]\setminus[U_i, T_i)), \]
which proves that the random set $\widetilde{R}$ is $(\mathcal{F}_t)$-progressively measurable, and almost-surely left-closed.

\begin{figure}[ht]
\centering
\def\svgwidth{\textwidth}
\scriptsize
% !TeX encoding = UTF-8
%% Creator: Inkscape 0.91_64bit, www.inkscape.org
%% PDF/EPS/PS + LaTeX output extension by Johan Engelen, 2010
%% Accompanies image file '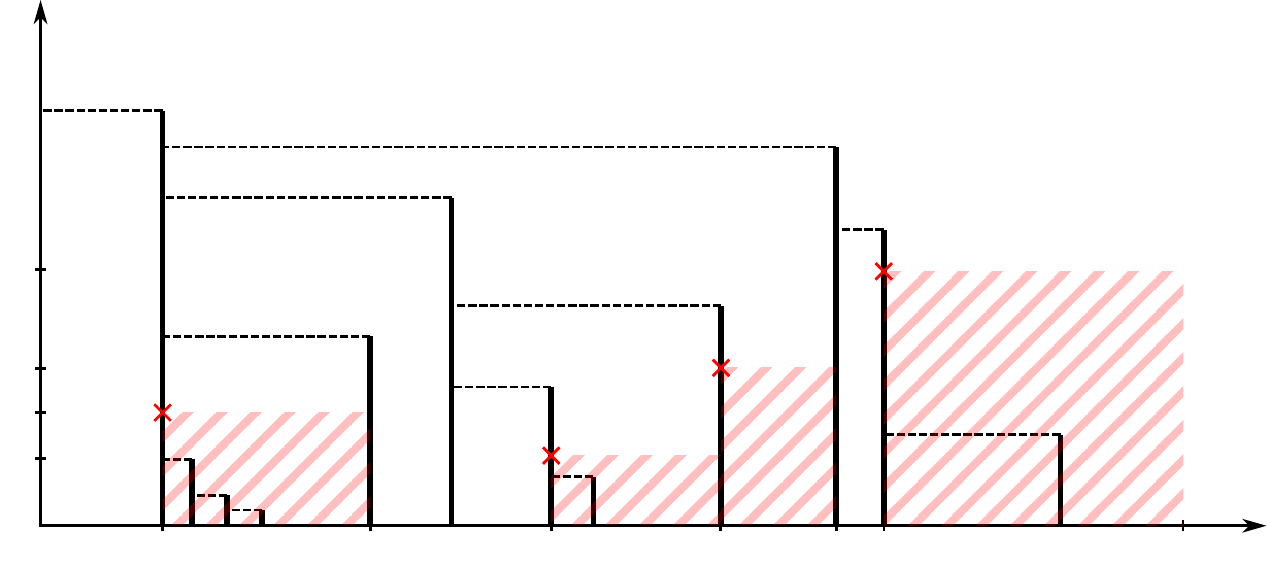' (pdf, eps, ps)
%%
%% To include the image in your LaTeX document, write
%%   \input{<filename>.pdf_tex}
%%  instead of
%%   \includegraphics{<filename>.pdf}
%% To scale the image, write
%%   \def\svgwidth{<desired width>}
%%   \input{<filename>.pdf_tex}
%%  instead of
%%   \includegraphics[width=<desired width>]{<filename>.pdf}
%%
%% Images with a different path to the parent latex file can
%% be accessed with the `import' package (which may need to be
%% installed) using
%%   \usepackage{import}
%% in the preamble, and then including the image with
%%   \import{<path to file>}{<filename>.pdf_tex}
%% Alternatively, one can specify
%%   \graphicspath{{<path to file>/}}
%% 
%% For more information, please see info/svg-inkscape on CTAN:
%%   http://tug.ctan.org/tex-archive/info/svg-inkscape
%%
\begingroup%
  \makeatletter%
  \providecommand\color[2][]{%
    \errmessage{(Inkscape) Color is used for the text in Inkscape, but the package 'color.sty' is not loaded}%
    \renewcommand\color[2][]{}%
  }%
  \providecommand\transparent[1]{%
    \errmessage{(Inkscape) Transparency is used (non-zero) for the text in Inkscape, but the package 'transparent.sty' is not loaded}%
    \renewcommand\transparent[1]{}%
  }%
  \providecommand\rotatebox[2]{#2}%
  \ifx\svgwidth\undefined%
    \setlength{\unitlength}{364.74111963bp}%
    \ifx\svgscale\undefined%
      \relax%
    \else%
      \setlength{\unitlength}{\unitlength * \real{\svgscale}}%
    \fi%
  \else%
    \setlength{\unitlength}{\svgwidth}%
  \fi%
  \global\let\svgwidth\undefined%
  \global\let\svgscale\undefined%
  \makeatother%
  \begin{picture}(1,0.44594744)%
    \put(0,0){\includegraphics[width=\unitlength,page=1]{regen_vars.pdf}}%
    \put(-0.00182065,0.22656705){\color[rgb]{0,0,0}\makebox(0,0)[lb]{\smash{$X_1$}}}%
    \put(-0.00182065,0.07767844){\color[rgb]{0,0,0}\makebox(0,0)[lb]{\smash{$X_4$}}}%
    \put(-0.00182065,0.11334967){\color[rgb]{0,0,0}\makebox(0,0)[lb]{\smash{$X_3$}}}%
    \put(-0.00182065,0.14902088){\color[rgb]{0,0,0}\makebox(0,0)[lb]{\smash{$X_2$}}}%
    \put(0.11449859,0.00517276){\color[rgb]{0,0,0}\makebox(0,0)[lb]{\smash{$U_3$}}}%
    \put(0.68601378,0.00517276){\color[rgb]{0,0,0}\makebox(0,0)[lb]{\smash{$U_1$}}}%
    \put(0.5262687,0.00517276){\color[rgb]{0,0,0}\makebox(0,0)[lb]{\smash{$T_4=U_2$}}}%
    \put(0.42468318,0.00517276){\color[rgb]{0,0,0}\makebox(0,0)[lb]{\smash{$U_4$}}}%
    \put(0.90857124,0.00517276){\color[rgb]{0,0,0}\makebox(0,0)[lb]{\smash{$t=T_1$}}}%
    \put(0.6472407,0.00517276){\color[rgb]{0,0,0}\makebox(0,0)[lb]{\smash{$T_2$}}}%
    \put(0.27501912,0.00517276){\color[rgb]{0,0,0}\makebox(0,0)[lb]{\smash{$T_3$}}}%
  \end{picture}%
\endgroup%
\caption{Mutations localized by the variables $(U_i, X_i, T_i)$}
\label{fig_mutloc}
\end{figure}

Let us now show the regeneration property of $\widetilde{R}$.
Define 
\[H(s,t) := \max\{x \geq 0, (u,x) \in \mathcal{N}, s < u \leq t \},\] the maximal height of atoms of $\mathcal{N}$ between $s$ and $t$.
We will note $H(t) := H(0,t)$ for simplicity.
Remark that
\[\widetilde{R} = \{ t \geq 0, M_t ([0,H(t)]) = 0 \}.\]
Let $S$ be a $(\mathcal{F}_t)$-stopping time, and suppose that almost surely, $S<\infty$, and $S\in \widetilde{R}$ is not isolated to the right.
From elementary properties of Poisson point processes and the fact that the random variables $(M_{(s,x)})_{s \geq 0, x \geq 0}$ are {i.i.d}, we know that the tree strictly to the right of $S$ is independent of $\mathcal{F}_S$ and has the same distribution as the initial tree.
Now since $S\in \widetilde{R}$ almost surely, we have, for all $t\geq S$,
\[M_t ([0, H(t)]) = M_t ([0,H(S,t)]),\]
because $M_t([H(S,t), H(0,t)]) = M_S([H(S,t), H(0,t)]) = 0$, in other words there are no mutations on the lineage of $t$ that is also part of the lineage of $S$.
As a consequence,
\[\widetilde{R}\cap [S, \infty) = \{ t \geq S, M_t ([0,H(S,t)]) = 0 \},\]
which implies that $\widetilde{R}\cap [S, \infty) - S$ has the same distribution as $\widetilde{R}$ and is independent of $\mathcal{F}_S$.

Therefore it is proven that $\widetilde{R}$ has the regenerative property,
%
%\textcolor{red}{$S$ is a $(\mathcal{F}_t)-$stopping time, is it sufficient to show that the same occurs for a stopping time wrt the natural filtration of $\widetilde{R}$ ?}
%\textcolor{green}{C'est bon : si $(\mathcal{\widetilde{F}}_t)$ est la filtration naturelle de $\widetilde{R}$ alors $\mathcal{\widetilde{F}}_t \subset \mathcal{{F}}_t$ ! Donc si $S$ est un $(\mathcal{\widetilde{F}}_t)$-temps d'arret, alors $\{S \leq t\} \in \mathcal{\widetilde{F}}_t$ donc $\{S \leq t\} \in \mathcal{{F}}_t$, i.e. $S$ est un $(\mathcal{F}_t)-$ temps d'arret.}
%
 so one can compute its Laplace exponent.
Here we are in the simple case where $\widetilde{R}$ has a positive Lebesgue measure, and we have in particular, for all $t \in \R_+$,
\begin{align*}
\PP (t \in \widetilde{R}) & = \E\left [ \e^{-\underline{\mu}(H_t)} \right ]\\
& = \int_{[0, \infty]} \PP(H_t \in \dd x) \e^{-\underline{\mu}(x)}\\
& = \int_{(0, \infty)} \PP(H_t \leq x) \e^{-\underline{\mu}(x)} \mu(\dd x) \\
& = \int_{(0, \infty)} \e^{-t \overline{\nu}(x)-\underline{\mu}(x)} \mu(\dd x).
\end{align*}
The passage from the second to the third line is done integrating by parts thanks to the assumption that $\underline{\mu}$ is continuous and that $\mu$ has an infinite mass.
The last displayed expression is therefore the density with respect to the Lebesgue measure of the renewal measure of $\widetilde{R}$ (see Remark \ref{rmq_heavy_regen_set_renewal_measure}).
This is sufficient to characterize our regenerative set, and the expression given in the Proposition is found by computing the Laplace transform of this measure:
\begin{align*}
\frac{1}{\phi(\lambda)} &= \int_{0}^{\infty} \e^{-\lambda t} \left (\int_{(0, \infty)} \e^{-t \overline{\nu}(x)-\underline{\mu}(x)} \mu(\dd x)\right ) \dd t\\
&= \int_{(0, \infty)} \frac{\e^{-\underline{\mu}(x)}}{\lambda+\overline{\nu}(x)} \mu(\dd x),
\end{align*}
which concludes the proof of (i).
\end{proof}

\begin{remark}
\label{rem:popovic}
It is important to note that the particular case of a CPP with intensity $\nu(dx) = \frac{\dd x}{x^2}$ has the distribution of a (root-centered) sphere of the so-called Brownian CRT (Continuum Random Tree), the real tree whose contour is a Brownian excursion. This is shown for example by Popovic in \citep{popovic_asymptotic_2004} where the term `Continuum genealogical point process' is used to denote what is called here a coalescent point process. The measure $\nu(dx) = \frac{\dd x}{x^2}$ is the push-forward of the Brownian excursion measure by the application which maps an excursion to its depth. In general, the sphere of radius say $r$ of a totally ordered tree is an ultrametric space whose topology is characterized by the pairwise distances between `consecutive' points at distance $r$ from the root. When the order of the tree is the order associated to a contour process, these distances are the depths of the `consecutive' excursions of the contour process away from $r$, see e.g. Lambert and Uribe Bravo \citep{lambert_comb_2016}.\\
If in addition to $\nu(dx) = \frac{\dd x}{x^2}$, we assume that $\mu (dt)=\theta\, \dd t$, which amounts to letting Poissonian mutations at constant rate $\theta$ on the skeleton of the CRT, we have
\[\frac{1}{\phi_\theta(\lambda)} = \int_0^\infty \frac{\theta \e^{-\theta x}}{\lambda + 1/x} \dd x.\]
In particular, for all $\theta,c>0$, we can compute:
\[\phi_\theta(c\lambda ) = c \phi_{\theta/c}(\lambda).\]
This implies the equality in distribution $c R_\theta \overset{(d)}{=} R_{\theta / c}$.
%There is thus a ``quasi-stability'' for the distribution of the regenerative sets $(R_\theta)_{\theta\in\R_+}$. % (ce n'est pas la manière la plus directe de montrer cette égalité en loi).
Nevertheless $R_\theta$ is not a so-called `stable' regenerative set, contrary to the sets $R_\alpha$ in \citep{marchal_nested_2004}.
\end{remark}

%\begin{remark}
%On peut voir grâce à cette expression que dans le cas $\nu = \frac{\dd x}{x^2}$, on a pour tout $c>0$ l'égalité en loi $c R_\theta \overset{(d)}{=} R_{\theta / c}$.
%Dans le temps du Brownien (...), $(\phi^\theta (\lambda))^{-1} = \int_0^\infty \frac{\e^{-\theta x}}{ch(\sqrt{2 \lambda x})^2} \dd x$
%\end{remark}

%\subsection{CPP clonal}
%On considère toujours la même construction, en supposant encore que la branche ancestrale ne porte pas de mutations.
%On peut s'intéresser à l'arbre clonal (réduit), le sous-arbre de l'arbre original engendré par la racine et les lignées clonales (les points de $\widetilde{R}$).
%Pour un CPP construit à partir d'un mesure d'intensité $\nu$, pour une mesure $\mu$ de mutation le long des branches, il existe une mesure $\nu^\mu$ telle que le sous-arbre clonal à la même loi qu'un CPP($\nu^\mu$).
\paragraph{Reduced clonal subtree.}
To show that the reduced clonal subtree is a CPP, let us exhibit the Poisson point process that generates it.
Let $\sigma$ be the subordinator with drift $1$ whose range is $\widetilde{R}$ and let $\mathcal{N}'$ be the following point process:
\[ \mathcal{N}' := \{ (t,x), \; t\in \R_+, \, x = H(\sigma_{t-},\sigma_t) > 0 \},\]
where $H(s, t) := \max\{x, (u,x) \in \mathcal{N}, s\leq u\leq t\}$.
This point process generates the reduced clonal subtree, because $H(\sigma_{t-},\sigma_t)$ is (up to a factor $1/2$) the tree distance between the consecutive leaves $\sigma_{t-}$ and $\sigma_t$ in $\widetilde{R}$.
To complete the proof of the theorem, it is sufficient to show that conditional on the death time $\zeta$ of the subordinator $\sigma$, $\mathcal{N}'$  is a Poisson point process on $[0, \zeta)\times \R_+$ with intensity $\dd t \otimes \nu^{\mu}$.

\begin{proof}[Proof of Theorem \ref{thm_cpp_clonal}, (ii)]
This is due to the regenerative property of the process.
For fixed $t\geq 0$, $\sigma_t$ is a $(\mathcal{F}_t)$-stopping time which is almost surely in $\widetilde{R}$ on the event $\{\sigma_t < \infty\} = \{\zeta > t\}$.
This implies that conditional on $\{\sigma_t < \infty\}$, the marked CPP strictly to the right of $\sigma_t$ is equal in distribution to the original marked CPP and is independent of $\mathcal{F}_{\sigma_t}$.
In particular: 
\[\left (\{(s,x) \in \R_+^2, (\sigma_t + s, x) \in \mathcal{N}\}, \widetilde{R}\cap[\sigma_t, \infty) - \sigma_t\right ) \overset{(d)}{=} (\mathcal{N}, \widetilde{R}).\]
This implies that $\mathcal{N}' \cap ([t, \infty)\times\R_+) - (t,0)$ has the same distribution as $\mathcal{N}'$ and is independent of $\mathcal{F}_{\sigma_t}$.
For fixed $\epsilon > 0$, let $(T_i, X_i)_{i\geq 1}$ be the sequence of atoms of $\mathcal{N}'$ such that $X_i > \epsilon$, ranked with increasing $T_i$.
Then $T_i$ is a $(\mathcal{F}_{\sigma_t})$-stopping time and the sequence $(T_i - T_{i-1}, X_i)_{i\geq 1}$ is i.i.d., with $T_0:=0$ for convenience.
It is sufficient to observe that $T_1$ is an exponential random variable to show that $\mathcal{N}'$ has an intensity of the form $\dd t\otimes \nu^\mu$:
\begin{align*}
\PP (T_1 > t+s \mid T_1 > t) &= \PP(H(0, \sigma_{t+s}) \leq \epsilon \mid H(0, \sigma_{t}) \leq \epsilon)\\
&= \PP(H(\sigma_t, \sigma_{t+s}) \leq \epsilon \mid H(0, \sigma_{t}) \leq \epsilon)\\
&= \PP(H(0, \sigma_{s}) \leq \epsilon) = \PP(T_1 > s).
\end{align*}
It remains to characterize the measure $\nu^\mu$ by computing $W^\mu(x)$.
Note that the following computations are correct thanks to the assumption that $\nu$ has no atom, so that $W$ is continuous.
To simplify the notation, let $H_t := H(0,t) = \max\{x, \; (u,x) \in \mathcal{N}, \, 0 \leq u \leq t\}$.
Then we can compute:
\begin{equation} \label{eq_w_mu_x}
\begin{aligned}
W^\mu(x) &= \int_0^\infty \e^{-t \overline{\nu^{\mu}}(x)} \dd t \\
&= \E \left[ \int_0^\infty \1_{\{H_{\sigma_t} \leq x\}}  \dd t\right]\\
&= \E \left [ \int_0^\infty \1_{\{H_{u} \leq x\}} \1_{\{u \in \widetilde{R}\}} \dd u \right ].
\end{aligned}
\end{equation}
Letting $F(y) := \PP (H_u \leq y) = \e^{-u \overline{\nu}(y)} $, we have
\begin{align*}\PP (H_u \leq x, u \in \widetilde{R}) &= \PP (H_u = 0) + \int_0^x \PP(H_u \in \dd y) \e^{-\underline{\mu}(y)}\\
&= F(0) + \int_0^x \e^{-\underline{\mu}(y)} \dd F (y).
\end{align*}
Now $ \dd F(y) = u \e^{-u \overline{\nu}(y)} \nu (\dd y) $, hence
\begin{align*}
W^\mu(x) &=  \int_0^\infty \e^{-u \overline{\nu}(0)} \dd u +\int_0^x \left (\int_0^\infty  u \e^{-u \overline{\nu}(y)} \dd u \right ) \e^{-\underline{\mu}(y)}\nu (\dd y)\\
&= \frac{1}{\overline{\nu}(0)} + \int_0^x \frac{1}{\overline{\nu}(y)^2} \e^{-\underline{\mu}(y)}\nu (\dd y)\\
&= W(0) + \int_0^x \e^{-\underline{\mu}(y)} \dd W(y),
\end{align*}
which concludes the proof.
\end{proof}

\begin{remark} \label{rmk_E_leb_R}
Equality \eqref{eq_w_mu_x} becomes, letting $x\to\infty$,
\[W^\mu (\infty) = \E[\lambda(\widetilde{R})].\]
\end{remark}

\begin{remark} 
In Remark \ref{rem:popovic}, we explained that when the contour of a random tree is a strong Markov process as in the case of Brownian motion, the root-centered sphere of radius $r$ of this tree is a CPP. In addition, the intensity measure of this CPP is the measure of the excursion depth under the excursion measure of the contour process (away from $r)$. 
Let $\mathbf{n}_{c}$ denote the excursion measure of the process $(B^{(c)}_t - \inf_{s\leq t}B^{(c)}_s)_{t\geq 0}$ away from $0$, with $B^{(c)}$ a Brownian motion with drift $c$, and let $h$ denote the depth of the excursion. In the case $\nu(\dd x) = \frac{\dd x}{x^2}=\mathbf{n}_{0}(h\in \dd x)$ and $\mu(\dd x) = \theta \,\dd x$, we have
\[W^\theta(x) = \frac{1-\e^{- \theta x}}{\theta} = \mathbf{n}_{\theta/2}(h\in [x, \infty])^{-1}.\]
This is consistent with Proposition 4 in \citep{abraham_poisson_2002}, which shows that putting Poissonian random cuts with rate $\theta$ along the branches of a standard Brownian CRT yields a tree whose contour process is $(e(s)-\theta s/2)_{s\geq 0}$ stopped at the first return at $0$, where $e$ is the normalized Brownian excursion. %Recall that the standard Brownian CRT (Continuum Random Tree) is the tree whose contour process  is the normalized Brownian excursion $e$. 
 \end{remark}

\subsection{Measure of the Clonal Population}
%Since we have a measure on the leaves of a CPP which is the push-forward $\Leb$ of the Lebesgue measure by the map $\alpha$, we naturally aim at computing the law of $\Leb(\widetilde{R})$.
Recall that for a CPP($\nu, \mu, z$), conditional on $O^z$ (no mutation on the origin branch), the Lebesgue measure $\lambda(\widetilde{R}\cap[0,T(z))$ is equal to the measure $\ell (R^z)$ of the set of clonal leaves .

\begin{corollary} \label{cor_taille_pop}
Let $\nu, \mu$ be two measures satisfying assumptions \eqref{hyp_mu_nu}.
\begin{enumerate}[(i)]
\item With the notation of Theorem \ref{thm_cpp_clonal}, the random variable $\lambda(\widetilde{R})$ follows an exponential distribution with mean $W^\mu (\infty)$.
\item In a CPP($\nu,\mu,z$), conditional on $O^z$, the measure $\ell (R^z)$ of the set of clonal leaves is an exponential random variable of mean $W^\mu (z)$.
\end{enumerate}
\end{corollary}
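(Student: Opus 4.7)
The plan is to read off both statements from Theorem~\ref{thm_cpp_clonal}. First I would identify the drift and killing rate of the subordinator $\sigma$ whose range is $\widetilde R$ for (i), and then recognise the reduced clonal subtree as a CPP$(\nu^\mu,z)$ for (ii), so that the known distribution of the total boundary mass of a CPP takes care of the rest.

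For (i), Theorem~\ref{thm_cpp_clonal}(i) gives $\widetilde R=\mathrm{range}(\sigma)$ for a subordinator $\sigma$ with Laplace exponent $\phi$. To see $\sigma$ has drift $1$, I would send $\lambda \to \infty$ in
\[\frac{1}{\phi(\lambda)} = \int_{(0,\infty)} \frac{\e^{-\underline{\mu}(x)}}{\lambda + \overline{\nu}(x)}\,\mu(\dd x),\]
apply dominated convergence, and use the identity $\int_0^\infty \e^{-\underline{\mu}(x)}\,\mu(\dd x)=-\int_0^\infty \dd(\e^{-\underline\mu(x)}) = 1$ (valid because $\underline{\mu}(\infty)=\infty$ under \eqref{hyp_mu_nu}) to obtain $\lambda/\phi(\lambda)\to 1$. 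Since jumps of a subordinator carry no Lebesgue mass, the Lebesgue measure of the range equals drift times lifetime, so $\lambda(\widetilde R) = \zeta\sim\mathrm{Exp}(\phi(0))$ is exponential. Its mean is $1/\phi(0)$, which equals $W^\mu(\infty)$ by Remark~\ref{rmk_E_leb_R}; this gives (i).

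For (ii), I would combine Theorem~\ref{thm_cpp_clonal}(ii) with the following elementary observation: for any intensity $\nu'$ satisfying \eqref{hyp_mu_nu}, the total $\ell$-mass of the boundary of a CPP$(\nu',z)$ equals the length $T(z)$ of its base interval, which by construction is the first atom-time of a Poisson process of rate $\overline{\nu'}(z)$, hence is $\mathrm{Exp}(\overline{\nu'}(z))$-distributed with mean $1/\overline{\nu'}(z)$. The needed step is that, conditional on $O^z$, the reduced clonal subtree of the CPP$(\nu,\mu,z)$ has the law of a CPP$(\nu^\mu,z)$: the Poisson point process $\mathcal N'$ built in the proof of Theorem~\ref{thm_cpp_clonal}(ii) has intensity $\dd t\otimes\nu^\mu$ and is independent of the origin-branch mutations (which alone govern $O^z$), so truncating $\mathcal N'$ at its first atom of height $\geq z$ produces exactly a CPP$(\nu^\mu,z)$. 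Using the identification $\ell(R^z) = \lambda(\widetilde R\cap[0,T(z)))$ on $O^z$ stated just before the corollary, we conclude that $\ell(R^z)$ is exponential with rate $\overline{\nu^\mu}(z) = 1/W^\mu(z)$, hence with mean $W^\mu(z)$.

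The only mildly delicate points are the drift computation for $\sigma$ and the identification, conditional on $O^z$, of the reduced clonal subtree as a finite-height CPP$(\nu^\mu,z)$ rather than the infinite CPP$(\nu^\mu)$ appearing in Theorem~\ref{thm_cpp_clonal}(ii); both become clear once the construction of $\mathcal N'$ is spelled out, and Remark~\ref{rmk_E_leb_R} spares us an explicit evaluation of $1/\phi(0)$.
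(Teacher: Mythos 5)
Your part (i) is correct and is essentially the paper's own proof: the paper likewise identifies $\lambda(\widetilde R)$ with the lifetime of the drift-$1$ subordinator with range $\widetilde R$ and reads the mean off Remark \ref{rmk_E_leb_R}. The only difference is that you recover the drift from the asymptotics $\lambda/\phi(\lambda)\to 1$ (a worthwhile consistency check), whereas the paper has drift $1$ by construction, via Remark \ref{rmq_heavy_regen_set_renewal_measure}.

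Part (ii) is also correct but takes a genuinely different route. The paper never invokes Theorem \ref{thm_cpp_clonal}(ii): it absorbs the truncation at height $z$ into the intensity, replacing $\nu$ by $\widehat\nu = \nu(\point\cap[0,z]) + \overline{\nu}(z)\delta_\infty$, so that branches higher than $z$ become branches of infinite height, which a.s.\ carry mutations (here the hypothesis $\mu([0,\infty))=\infty$ from \eqref{hyp_mu_nu} is what kills the clonal set at $T(z)$); it then applies part (i) to the pair $(\widehat\nu,\mu)$ together with the computation $\widehat W(x)=W(x\wedge z)$, whence $(\widehat W)^\mu(\infty)=W^\mu(z)$. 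You instead use the structural content of Theorem \ref{thm_cpp_clonal}(ii) — the reduced clonal subtree is itself a CPP of intensity $\nu^\mu$ — and read the answer off the fact that the boundary mass of a CPP$(\nu',z)$ is exponential with rate $\overline{\nu'}(z)$. This is valid and arguably more illuminating, since it exhibits $W^\mu(z)$ as the expected boundary mass $1/\overline{\nu^\mu}(z)$ of the clonal tree; but the step you call ``mildly delicate'' is the real crux and needs to be spelled out. You must show that the first atom of $\mathcal N'$ of height $\geq z$ occurs exactly at local time $\lambda(\widetilde R\cap[0,T(z)])=\ell(R^z)$, and this is \emph{not} only a matter of a jump of $\sigma$ straddling $T(z)$: with positive probability $T(z)$ belongs to $\widetilde R$ and is approached by $\widetilde R$ from both sides (for instance when the branch at $T(z)$ and the whole clonal spine to its left are mutation-free), so that $\sigma$ passes through $T(z)$ continuously and no jump records the tall branch. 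The identification survives only because the paper defines $H(s,t)$ as a maximum over the \emph{closed} interval $[s,t]$, so that at continuity points $H(\sigma_{t-},\sigma_t)=f^{\mathcal N}(\sigma_t)$ and the branch at $T(z)$ still contributes an atom of $\mathcal N'$ at the correct local time; under a ``gap maxima of jumps only'' reading of $\mathcal N'$, your identification would fail on an event of positive probability. With this point and the drift-$1$ identity made explicit, your argument is complete; the paper's route via $\widehat\nu$ sidesteps the issue entirely, at the cost of being less structural.
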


\begin{proof}
Given a subordinator $\sigma$ with drift $1$ and range $\widetilde{R}$, it is known (a quick proof of this can be found in \citep{bertoin_subordinators:_1997}) that
\[\lambda(\widetilde{R}) = \inf\{ t > 0, \sigma_t = \infty \}.\]
Now the killing time of the subordinator $\sigma$ is an exponential random variable of parameter $ \phi (0) $, where $\phi$ is the Laplace exponent of $\sigma$.
We already know from Remark \ref{rmk_E_leb_R} the mean of that variable:
\[\phi(0)^{-1} = \E \left [ \lambda (\widetilde{R})\right ] = W^\mu (\infty). \]

With a fixed height $z>0$, one is interested in the law of $\lambda(\widetilde{R}\cap[0,T(z)))$. 
%This is the measure of the clonal population in the CPP of height $z$.
By the properties of Poisson point processes, stopping the CPP at $T(z)$ amounts to changing the intensity measure $\nu$ of the CPP for $\widehat{\nu}$, with
\[\widehat{\nu} = \nu(\point \cap [0,z]) + \overline{\nu}(z) \delta_\infty.\]
Then if $\widehat{W}(x) := \widehat{\nu}([x,\infty])^{-1}$, we have
\begin{align*}
\widehat{W}(x) &= \big( \nu([x,\infty]\cap[0,z]) + \overline{\nu}(z) \big )^{-1} \\
&=  \big( \nu([x\wedge z,z]) + {\nu}([z, \infty]) \big )^{-1}\\
&=  \big( \nu([x\wedge z, \infty]) \big )^{-1}\\
&= W(x\wedge z),
\end{align*}
and because of the characterization of $W^{\mu}$ given in Theorem \ref{thm_cpp_clonal}, we also have $\left (\widehat{W}\right )^{\mu}(x) = W^{\mu}(x\wedge z)$.
Therefore $\left (\widehat{W}\right )^{\mu}(\infty) = W^\mu (z)$, and we can conclude that $\lambda(\widetilde{R}\cap[0,T(z)])$ is an exponential random variable of mean $W^\mu (z)$. 
\end{proof}

\paragraph{Probability of clonal leaves.}
Here, we consider a CPP($\nu, \mu,z$) and aim at computing the probability of existence of clonal leaves in the tree.

\begin{prop} \label{prop_proba_clone}
In a CPP($\nu,\mu,z$), under the assumptions \eqref{hyp_mu_nu} and with the notation of Theorem \ref{thm_cpp_clonal}, there is a mutation-free lineage with probability
	\[\frac{W(z)\,\e^{-\underline{\mu}(z)}}{W^\mu(z)}.\]
\end{prop}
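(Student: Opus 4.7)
My plan is to derive the formula from the identity
\[\PP(R^z\neq\emptyset) \;=\; \frac{\E[\ell(R^z)]}{W^\mu(z)},\]
which will hold once I know that $\ell(R^z)$ conditioned on $\{R^z\neq\emptyset\}$ is exponentially distributed with mean $W^\mu(z)$. The two main ingredients are therefore (a) an explicit evaluation of $\E[\ell(R^z)]$, and (b) this exponential conditional law.

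For (a), I apply Fubini: $\E[\ell(R^z)] = \int_0^\infty \PP(t\in R^z)\,\dd t$. For fixed $t$, conditional on $\mathcal N$ and on $\{t<T(z)\}$, the lineage $[\![\rho,\alpha^z_t]\!]$ splits into the upper portion $(x_1(t),z]$ of the origin branch and a union of non-origin segments whose heights form a disjoint partition of $[0,x_1(t)]$ (as in the lineage decomposition used in the proof of Theorem \ref{thm_cpp_clonal}(i)). Since $M_\rho$ and $(M_N)_{N\in\mathcal N}$ are independent Poisson processes of intensity $\mu$, the conditional probability that the whole lineage carries no mutation factors, remarkably, as
\[e^{-(\underline\mu(z)-\underline\mu(x_1(t)))}\cdot e^{-\underline\mu(x_1(t))} \,=\, e^{-\underline\mu(z)},\]
independently of $\mathcal N$. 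Combining with $\PP(t<T(z))=e^{-t\overline\nu(z)}$ and integrating, I obtain $\E[\ell(R^z)]=e^{-\underline\mu(z)}/\overline\nu(z)=W(z)\,e^{-\underline\mu(z)}$.

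For (b), set $T^*:=\inf R^z$. Letting $Y_{\max}$ be the highest atom of $M_\rho$ (with $Y_{\max}:=0$ on $O^z$) and $T_y:=\inf\{t:f^{\mathcal N}(t)\geq y\}$, one has $R^z=\widetilde R\cap[T_{Y_{\max}},T(z))$, so $T^*$ is the first hitting time of $\widetilde R$ after $T_{Y_{\max}}$. It is an $(\mathcal F_t)$-stopping time, and by the a.s.\ left-closedness of $\widetilde R$ established in the proof of Theorem \ref{thm_cpp_clonal}(i), on $\{R^z\neq\emptyset\}$ it lies in $\widetilde R$. The strong-Markov argument of that proof therefore applies at $T^*$: post-$T^*$, the CPP with its mutations is an independent fresh copy of the original, so $T(z)-T^*$ is an independent $\mathrm{Exp}(\overline\nu(z))$ and $\widetilde R\cap[T^*,T(z))-T^*$ has the law of $\widetilde R\cap[0,T(z))$ under the original measure. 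Since $\widetilde R\cap[0,T(z))$ is measurable with respect to $(\mathcal N,(M_N))$ and hence independent of $M_\rho$, Corollary \ref{cor_taille_pop}(ii) gives $\ell(\widetilde R\cap[0,T(z)))\sim\mathrm{Exp}(1/W^\mu(z))$ unconditionally. Consequently $\ell(R^z)\mid\{R^z\neq\emptyset\}\sim\mathrm{Exp}(1/W^\mu(z))$, and combined with (a) this yields the claimed $W(z)\,e^{-\underline\mu(z)}/W^\mu(z)$.

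The main obstacle is step (b), specifically transferring the strong Markov property to the stopping time $T^*$ in the case $Y_{\max}>0$: although $T^*$ is not one of the ``canonical'' atoms of $\widetilde R$, it is a first-hitting time of a progressively measurable, left-closed regenerative set, which places it exactly in the framework used in the proof of Theorem \ref{thm_cpp_clonal}(i). Once this is verified, the remainder of the proof is the algebraic combination of (a) and (b).
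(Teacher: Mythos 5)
Your proposal is correct and is essentially the paper's own argument: the paper likewise computes $\E[\ell(R^z)]$ twice, once directly by Fubini (giving $W(z)\,\e^{-\underline{\mu}(z)}$, via exactly the telescoping you make explicit) and once by applying the regeneration property at the first clonal point $X=\inf R^z$, which identifies the law of $R^z$ shifted by $X$, conditional on non-emptiness, with the $O^z$-conditioned law covered by Corollary \ref{cor_taille_pop}(ii), so that $\E[\ell(R^z)]=\PP(C^z)\,W^\mu(z)$. Your rewriting via $R^z=\widetilde R\cap[T_{Y_{\max}},T(z))$, the left-closedness of $\widetilde R$, and the full exponential conditional law (rather than just its mean) only adds detail the paper leaves implicit, without changing the substance of the proof.
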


\begin{remark}
   Using a description of CPP trees in terms of birth-death trees (see Section \ref{sec_bd_proc}), the previous result could alternatively be deduced from the expression of the survival probability of a birth-death tree up to a fixed time (see Proposition \ref{prop_bd_process} in the appendix).
\end{remark}

\begin{proof}
Suppose the CPP($\nu, \mu, z$) is given by the usual construction with the Poisson point processes $\mathcal{N}$ and $(M_N)_{n\in\mathcal{N}}$.
We use the regenerative property of the process with respect to the natural filtration $(\mathcal{F}_t)_{t\geq 0}$ of the marked CPP defined by:
\[\mathcal{F}_t = \sigma\left (\mathcal{N}\cap([0,t]\times\R_+), M_{(s,x)}, s \leq t, x\geq 0 \right ).\]
Let $X$ be the first clone on the real half-line.
\[X := \inf \{x \in [0,T(z)), \, M ([\![\rho,\alpha_x]\!]) = 0\}, \]
with the convention $\inf \emptyset = \infty$ and with the usual notation.
%: $M$ denotes the point process of mutations on the tree, $\alpha_x$ is the leaf introduced in the definition of comb functions, and $\rho = (0,z)$ is the root of the tree.
Then $X$ is a $(\mathcal{F}_t)$-stopping time, and conditional on $\{X < \infty\}$, the law of the tree on the right of $X$ is the same as that of the original tree conditioned on having no mutation on the origin branch.
Let $C^z := \{X<\infty\}$ denote the event of existence of a mutation-free lineage. Recall that $R^z$ denotes the set of clonal leaves and that $O^z$ denotes the event that there is no mutation on the origin branch. 
Then we have
\begin{align*}
\E\left [\Leb (R^z)\right ] &= \PP(C^z) \E\left [\Leb (R^z) \mid C^z \right ]\\
&= \PP(C^z) \E\left [\Leb (R^z\cap [X, \infty) - X) \mid X < \infty \right ]\\
&= \PP(C^z) \E\left [\Leb (R^z) \mid O^z \right ]\\
&= \PP(C^z) W^\mu(z),
\end{align*}
%So it is sufficient to compute both quantities $\E\left [\Leb (R)\right ]$ and $\E\left [\Leb (R) \mid O \right ]$ to know the probability of the event $C$.
%Remark that the random set $R$, conditional on the event $O$, has by construction the law of the set $\widetilde{R}$ introduced in Definition \ref{def_widetilde_R}.
%Therefore, the second quantity was already computed when looking to express $W^\mu$ in the preceding section (see Corollary \ref{cor_taille_pop});
%\[\E\left [\Leb (R) \mid O \right ] = W^\mu(z). \]
where the last equality is due to  Corollary \ref{cor_taille_pop} (ii).
Furthermore,
\begin{align*}
\E\left [\Leb (R^z)\right ] &= \E \int_0^{T(z)} \1_{\{t\in \widetilde{R}\}} \, \dd t \\
&= \int_0^\infty \PP (t \in \widetilde{R}, t < T(z)) \, \dd t\\
&= \int_0^\infty \e^{-t\overline{\nu}(z)} \e^{-\underline{\mu}(z)} \, \dd t\\
&= \frac{\e^{-\underline{\mu}(z)}}{\overline{\nu}(z)} = W(z) \,\e^{-\underline{\mu}(z)}.
\end{align*}
Therefore, the probability that there exists a clone of the origin in the present population is
\[\PP(C^z) = \frac{W(z)\,\e^{-\underline{\mu}(z)}}{W^\mu(z)},\]
which concludes the proof.
\end{proof}

\subsection{Application to the Allele Frequency Spectrum}

\subsubsection{Intensity of the Spectrum}

From now on we fix two measures $\nu, \mu$ satisfying assumptions \eqref{hyp_mu_nu}, and we further assume for simplicity that $\overline{\nu}(z) \in (0,\infty)$ for all $z>0$. We denote by $\mathbb{T}^z$ a CPP($\nu, \mu, z$). 

 Under the infinitely-many alleles model, recall that each mutation gives rise to a new type called allele, so that the population on the boundary of the tree can be partitioned into carriers of the same allele, called \emph{allelic partition}. The key idea of this section is that expressions obtained for the clonal population of the tree allow us to gain information on quantities related to the whole allelic partition. We call $m\in \mathbb{T}^z$ a mutation if $M(\{m\})\not=0$ and denote by $\mathbb{T}^z_m$ the subtree descending from $m$. 
If $f$ is a functional of real trees (say simple, marked, equipped with a measure on the leaves), one might be interested in the quantity
\begin{equation} \label{eq_sum_mutations}
\phi(\mathbb{T}^z,f) := \sum_{\underset{\text{\footnotesize mutation}}{m \in \mathbb{T}^z}} f(\mathbb{T}^z_m),
\end{equation}
or in its expectation
\[\psi(z,f) := \E\left [\phi(\mathbb{T}^z,f)\right ].\]
%Now these quantities are computable, in particular we can measure the intensity of the allele frequency spectrum (defined in what follows) thanks to the knowledge we have on the clonal population.
For each mutation $m\in\mathbb{T}^z$, we define the set $R_m^z$ of the leaves carrying $m$ as their last mutation
\[ R_m^z := \{t \in \R_+, \; \text{the most recent mutation on the lineage of $\alpha_t^z$ is } m\}.\]
We define the random point measure putting mass on the measures of the different allelic clusters
\[\Phi_z := \sum_{\underset{\text{\footnotesize mutation}}{m \in \mathbb{T}^z}} \1_{\{R_m^z \neq \emptyset\}} \, \delta_{\lambda(R_m^z)}. \]
The intensity of the allele frequency spectrum is the mean measure $\Lambda_z$ of this point measure, that is the measure on $\R_+$ such that for every Borel set $B$ of $\R_+$,
\[\Lambda_z(B) = \E[\Phi_z(B)].\]
The analog for this measure when the number of individuals in the population is finite is the mean measure $(\E A(k))_{k > 0}$ of the number $A(k)$ of alleles carried by exactly $k$ individuals (notation $A_\theta(k,t)$ in \citep{lambert_allelic_2009} and \citep{champagnat_splitting_2012}).
The goal here is then to identify $\Lambda_z$, by noticing that for a Borel set $B$,
\[\Phi_z(B) = \phi(\mathbb{T}^z, f_B) \quad\text{and}\quad \Lambda_z(B) = \psi(z,f_B),\]
with $f_B(\mathbb{T}) := \1_{\ell(R) \in B}$, where $\mathbb{T}$ is an ultrametric tree with point mutations and measure $\ell $ supported by its leaves, and $R$ denotes the set of its clonal leaves.

\begin{prop} \label{prop_allelic_freq}
In a CPP($\nu, \mu, z$), under the assumptions \eqref{hyp_mu_nu} and with the notation of Theorem \ref{thm_cpp_clonal}, the intensity of the allele frequency spectrum has a density with respect to the Lebesgue measure:
\[\frac{\Lambda_z(\dd q)}{\dd q} = W(z)\left (\frac{\e^{-\underline{\mu}(z)}}{W^\mu(z)^2}\e^{-q/W^\mu (z)} + \int_{[0,z)}  \frac{\e^{-\underline{\mu}(x)}}{W^\mu(x)^2}\e^{-q/W^\mu (x)} \mu(\dd x)\right ). \]
\end{prop}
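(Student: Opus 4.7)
The plan is to decompose $\Lambda_z$ as a sum of contributions, one from the clonal cluster $R^z$ of the root and one from each mutation in the tree, then apply Campbell's formula to the Poisson structure. For any mutation $m$ located at a skeleton point $(t_0, y_0)$ of $\mathbb{T}^z$, the subtree descending from $m$, equipped with its inherited marks, has by restriction properties of Poisson point processes applied to $\mathcal{N}$ and to the family $(M_N)$ the law of a CPP$(\nu, \mu, y_0)$. Under this identification, $R_m^z$ coincides with the clonal boundary of the sub-CPP, so that $\ell(R_m^z) \eqdist \ell(R^{y_0})$.

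Step 1 (marginal law of a cluster). I would prove that for any $y > 0$ in a CPP$(\nu, \mu, y)$, the density of $\ell(R^y)$ at $q > 0$ equals $\frac{\e^{-\underline{\mu}(y)} W(y)}{W^\mu(y)^2} \e^{-q/W^\mu(y)}$. The argument reuses the strong Markov step from the proof of Proposition \ref{prop_proba_clone}: on the event $C^y = \{R^y \neq \emptyset\}$, the first clone $X = \inf R^y$ is a stopping time for the natural filtration and the tree strictly to the right of $X$ is distributed as the original tree conditional on $O^y$. Since on $C^y$ every clonal position lies in $[X, T(y))$, one obtains $\ell(R^y) \mid C^y \eqdist \ell(R^y) \mid O^y$, which by Corollary \ref{cor_taille_pop}(ii) is $\mathrm{Exp}(1/W^\mu(y))$. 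Combined with $\PP(C^y) = \frac{\e^{-\underline{\mu}(y)} W(y)}{W^\mu(y)}$ from Proposition \ref{prop_proba_clone}, this yields the claimed density on $(0, \infty)$.

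Step 2 (Campbell sum). The clonal cluster $R^z$ contributes directly the density $\frac{\e^{-\underline{\mu}(z)} W(z)}{W^\mu(z)^2} \e^{-q/W^\mu(z)}$. The origin-branch mutations, whose positions form a PPP of intensity $\mu$ on $(0, z)$, contribute $\int_{(0,z)} \frac{\e^{-\underline{\mu}(y)} W(y)}{W^\mu(y)^2} \e^{-q/W^\mu(y)} \mu(\dd y)$. For non-origin mutations, Campbell applied to $\mathcal{N}' := \mathcal{N} \cap \{t < T(z)\}$ combined with $\E[T(z)] = W(z)$, followed by Campbell applied to each $M_{(t_0, x_0)}$ and a Fubini swap $\int_{[0,z)} \nu(\dd x) \int_{(0,x)} \mu(\dd y) \cdot F(y) = \int_{(0,z)} (\overline{\nu}(y) - \overline{\nu}(z))\, F(y)\, \mu(\dd y)$, produces
\[W(z) \int_{(0,z)} \frac{\e^{-\underline{\mu}(y)} W(y) (\overline{\nu}(y) - \overline{\nu}(z))}{W^\mu(y)^2} \e^{-q/W^\mu(y)} \mu(\dd y).\]
Summing the three contributions and using the identity $1 + W(z)(\overline{\nu}(y) - \overline{\nu}(z)) = W(z)/W(y)$, which follows from $W(y) \overline{\nu}(y) = W(z) \overline{\nu}(z) = 1$, collapses the $W(y)$ factor inside the integrand and produces the stated formula.

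The main obstacle is Step 1: upgrading the conditional law of $\ell(R^y)$ from ``conditional on $O^y$'' (the content of Corollary \ref{cor_taille_pop}(ii)) to ``conditional on $C^y$'', which is what is needed to express an absolutely continuous density on $(0, \infty)$ for the unconditional law. Once this is secured, the rest reduces to routine Campbell bookkeeping, entirely parallel to the expected-number-of-mutations computation in the proof of Proposition \ref{prop:total-nb-mut}.
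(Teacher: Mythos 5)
Your proof is correct and follows essentially the same route as the paper: the paper likewise reduces the problem to the law of the clonal measure of a height-$x$ subtree (obtained, exactly as in your Step 1, by combining Proposition \ref{prop_proba_clone}, Corollary \ref{cor_taille_pop}(ii) and the strong Markov argument at the first clone), and then performs the same Campbell computation over the root allele, the origin-branch mutations and the mutations on the atoms of $\mathcal{N}$, with the same $\E[T(z)]=W(z)$ and Fubini steps. The only differences are organizational: the paper computes the tail $\Lambda_z((q,\infty))$ through a general identity $\psi(z,f)=\widetilde f(z)+W(z)\int_{[0,z)}\widetilde f(x)W(x)^{-1}\mu(\dd x)$ and differentiates in $q$ at the end, and it justifies the subtree-law identification via the birth-death representation of Section \ref{sec_bd_proc} rather than via restriction properties of Poisson point processes.
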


\begin{remark}
This expression is to be compared with Corollary 4.3 in \citep{champagnat_splitting_2012} (the term $(1-\frac{1}{W^\theta(x)})^{k-1}$ with discrete $k$ becoming here $\e^{-q/W^\mu (x)}$ with continuous $q$).
\end{remark}

\begin{remark}
Integrating this expression, we get the expectation of the number of different alleles in the population:
\[\Lambda_z(\R_+) = \E[\Phi_z(\R_+)] = W(z) \left ( \frac{\e^{-\underline{\mu}(z)}}{W^\mu(z)} + \int_{[0,z)} \frac{\e^{-\underline{\mu}(x)}}{W^\mu(x)} \mu(\dd x)\right ). \]
Note that $W(z)$ is the expectation of the total mass of the measure $\Leb$ in a CPP($\nu, \mu, z$).
It is then natural to normalize by this quantity and then let $z\to\infty$.
In \eqref{hyp_mu_nu} we assumed that $\mu([0,\infty)) = \infty$, and since $W^{\mu}(z)$ is an increasing, positive function of $z$, we have clearly $\frac{\e^{-\underline{\mu}(z)}}{W^{\mu}(z)} \to 0$ when $z\to\infty$.
Therefore we have
\[\lim_{z\rightarrow\infty} \frac{\E[\Phi_z(\R_+)]}{W(z)} = \int_{\Rplus} \frac{\e^{-\underline{\mu}(x)}}{W^\mu(x)} \mu(\dd x).\]

This provides us with a limiting spectrum intensity, written simply $\Lambda$:
\begin{equation} \label{eq_limit_spectrum}
 \frac{\Lambda(\dd q)}{\dd q} := \lim_{z\rightarrow\infty} \frac{1}{W(z)} \left (\frac{\Lambda_z(\dd q)}{\dd q}\right ) = \int_{\Rplus} \frac{\e^{-\underline{\mu}(x)}}{W^\mu(x)^2}\e^{-q/W^\mu (x)}  \mu(\dd x). 
\end{equation}
Note that in the Brownian case $\nu = \dd x / x^2$, we get a simple expression $\Lambda(\dd q) = (\theta/q) \e^{-\theta q} \dd q$.
%This expression is to be compared with the almost sure convergence obtained in Theorem 3.1 of \citep{lambert_allelic_2009}.
\end{remark}

\begin{proof}[Proof of Proposition \ref{prop_allelic_freq}]
We aim at computing $\psi(z,f)$, for $f$ a measurable non-negative function of a simple real tree $\mathbb{T}$ with point mutations equipped with a measure $\ell$ on its leaves.
Suppose the mutations $(M_n)_{n\geq 1}$ on the tree $\mathbb{T}$ are numbered by increasing distances from the root.
Here we use the fact that a CPP can be seen as the genealogy of a birth-death process (see Section \ref{sec_bd_proc} for the development of this argument), a Markovian branching process whose time parameter is the distance from the root.
This description implies that, for all $n\geq 1$, conditional on the height $H_n$ of mutation $M_n$, the subtree growing from $M_n$ has the law of $\mathbb{T}^{H_n}$.
Set
\[\widetilde{f}(x) := \E[f(\mathbb{T}^x)].\]
Denoting $H_n^z$ the height of the $n$-th mutation $M_n^z$ of $\mathbb{T}^z$, we get
\begin{align*}
 \psi(z,f) &= \E \left [\sum_n f(\{\text{subtree of $\mathbb{T}^z$ growing from } M_n^z\}) \right ] \\
& = \sum_n \E\left [f(\{\text{subtree of $\mathbb{T}^z$ growing from } M_n^z\})\right ]\\
& = \sum_n \E\left [\widetilde{f}(H_n^z)\right ]\\
& = \E\left [\sum_n \widetilde{f}(H_n^z)\right ].
\end{align*}
Now this expression is simple to compute knowing $\widetilde{f}$ and the intensity of the point process giving mutation heights.
Indeed, by elementary properties of Poisson point processes
\begin{align*}
\E\left [\sum_n \widetilde{f}(H_n^z)\right ] & =
\E\left [\widetilde{f}(z) + \sum_{y \in M_{(0,z)}} \widetilde{f}(y) + \sum_{(t,x)\in \mathcal{N}, \; t \leq T(z)} \left ( \sum_{y \in M_{(t,x)}} \widetilde{f}(y) \right ) \right ]\\
& = \widetilde{f}(z) + \int_{[0,z)} \widetilde{f}(x) \mu(\dd x) + \E\left [ T(z) \int_{[0,z)} \!\!\!\nu(\dd y) \int_{[0,y)}\!\!\widetilde{f}(x)\mu(\dd x)\right ]\\
& = \widetilde{f}(z) + \int_{[0,z)} \widetilde{f}(x) \mu(\dd x) + \frac{1}{\overline{\nu}(z)} \int_{[0,z)} \widetilde{f}(x) (\overline{\nu}(x) - \overline{\nu}(z)) \mu(\dd x)\\
& = \widetilde{f}(z) + W(z) \int_{[0,z)}  \frac{\widetilde{f}(x)}{W(x)} \mu(\dd x).
\end{align*}
Now consider, for a fixed $q > 0$, the function $f$ given by $f(\mathbb{T}):= \1_{\ell(R) >q}$, where $\mathbb{T}$ is a generic ultrametric tree with point mutations and measure $\ell $ supported by its leaves, and $R$ denotes the set of its clonal leaves.
%with value $1$ if the measure of the clonal population of $\mathbb{T}$  is greater than $q$, and $0$ otherwise.
This allows us to compute the expectation $\Lambda_z((q,\infty))$ of the number of mutations carried by a population of leaves of measure greater than $q$.
Since the law of the measure of clonal leaves is known for a CPP, (see Corollary \ref{cor_taille_pop})%  une variable exponentielle d'espérance $W^\mu(z)$, 
, we deduce
\begin{align*}
\widetilde{f}(z) &= \PP(C^z) \,\PP(\Leb(R^z) > q \mid C^z)\\
&= \PP(C^z) \,\PP(\lambda(\widetilde{R}\cap[0,T(z))) > q)\\
&=  \frac{W(z)\e^{-\underline{\mu}(z)}}{W^\mu(z)}\,\e^{-q/W^\mu (z)},
\end{align*}
where $C^z$ again denotes the event of existence of clonal leaves in $\mathbb{T}^z$ and $\widetilde{R}$ is the set defined in Definition \ref{def_widetilde_R}.
Thus we have
\begin{align*}
\Lambda_z((q,\infty)) & = \E[\Phi_z((q,\infty))] \\
& = W(z)\left (\frac{\e^{-\underline{\mu}(z)}}{W^\mu(z)}\e^{-q/W^\mu (z)} + \int_{[0,z)}  \frac{\e^{-\underline{\mu}(x)}}{W^\mu(x)}\e^{-q/W^\mu (x)} \mu(\dd x)\right ).
\end{align*}
Differentiating the last quantity yields the expression in the Proposition.
\end{proof}

\subsubsection{Convergence Results for Small Families}

Recall the construction of a CPP from a Poisson point process $\mathcal{N}$ in Section \ref{sect_constr_CPP}, and the point processes of mutations $(M_N)_{N\in\mathcal{N}}$.
Since a CPP($\nu, \mu, z$) is given by the points of $\mathcal{N}$ with first component smaller than $T(z)$, this construction yields a coupling of $(\mathbb{T}^z)_{z>0}$, where for each $z>0$, $\mathbb{T}^z$ is a CPP($\nu,\mu,z$). Recall the notation $\Phi_z$ from the previous subsection.
Then, similarly to Theorem 3.1 in \citep{lambert_allelic_2009}, we have the following almost sure convergence.
\begin{prop}
\label{prop:radius}
Under the preceding assumptions, and further assuming $\nu(\{\infty\})=0$, for any $q> 0$, we have the convergence:
\[\lim_{z\rightarrow\infty} \frac{\Phi_z((q,\infty))}{T(z)} = \int_{\Rplus}  \frac{\e^{-\underline{\mu}(x)}}{W^\mu(x)}\e^{-q/W^\mu (x)}  \mu(\dd x) = \Lambda((q,\infty)) \qquad\textit{a.s.}\]
\end{prop}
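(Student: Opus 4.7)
The plan is to exploit the coupling between all the CPPs $(\mathbb T^z)_{z>0}$ through the Poisson point process $\mathcal N$ together with its mutation marks $(M_N)_{N\in\mathcal N}$, and to reduce the almost-sure convergence to a pointwise ergodic theorem for a shift-invariant point process on the time axis. A first easy ingredient is that $z\mapsto T(z)$ is non-decreasing with $T(z)$ exponentially distributed of rate $\overline\nu(z)\to 0$ (using $\nu(\{\infty\})=0$), which together yield $T(z)\to\infty$ almost surely.

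The decisive structural observation is that for each mutation $m=(t_0,h_0)$ with $t_0\in[0,T(z))$, the cluster $R_m^z$ depends only on the local geometry of $\mathcal N$ and of the mutations inside the random strip $[t_0,\tau_{h_0}(t_0))\times(0,h_0]$, where $\tau_{h_0}(t_0):=\inf\{u>t_0 : f^{\mathcal N}(u)>h_0\}\leq T(z)$. In particular $R_m^z$ does not depend on $z$ and coincides with a cluster $\tilde R_m$ defined once and for all from $\mathcal N$ restricted to $[0,\infty)\times(0,\infty)$. One then extends $\mathcal N$ to a two-sided Poisson point process on $\R\times(0,\infty]$ and defines $\Psi$ as the point process on $\R$ of time-coordinates of the (non-origin) branch mutations $m$ satisfying $\Leb(\tilde R_m)>q$. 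Being a measurable, shift-invariant functional of the bi-infinite marked Poisson process, $\Psi$ is stationary and ergodic; a Mecke/Palm computation parallel to the one in the proof of Proposition~\ref{prop_allelic_freq} shows that its intensity is exactly $\Lambda((q,\infty))$. Birkhoff's pointwise ergodic theorem then yields $\Psi([0,T))/T\to\Lambda((q,\infty))$ a.s.\ as $T\to\infty$, and the specialization $T=T(z)\to\infty$ delivers the main branch-mutation contribution to $\Phi_z((q,\infty))/T(z)$.

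It then remains to show that the ancestral allele (carried by the clonal set $R^z$) and the origin-branch mutations contribute only $o(T(z))$ to $\Phi_z((q,\infty))$. The ancestral contribution is at most $1$ and is trivially negligible. For the origin count $O_z$, the identity $\E[\Phi_z((q,\infty))]=\widetilde f(z)+W(z)\int_0^z \widetilde f(h)/W(h)\,\mu(\dd h)$ proved in Proposition~\ref{prop_allelic_freq} combined with $\E[\Phi_z((q,\infty))]/W(z)\to\Lambda((q,\infty))$ and a dominated-convergence argument yields $\E[O_z]=\int_0^z \widetilde f(h)\mu(\dd h) = o(W(z))$. Using in addition the deterministic uniform bound $\Phi_z((q,\infty))\leq T(z)/q$ (the clusters $(R_m^z)_m$ form a disjoint family inside $[0,T(z))$), the monotonicity of $O_z$ in $z$, and a Borel--Cantelli argument along a carefully chosen subsequence $z_k\uparrow\infty$, one can upgrade this to $O_z/T(z)\to 0$ almost surely. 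The main technical obstacle lies precisely in this last step: converting the in-expectation estimate $\E[O_z]=o(W(z))$ into almost-sure control requires handling carefully the dependencies between different origin-branch clusters through the shared background $\mathcal N$.
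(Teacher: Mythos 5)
Your treatment of the branch-mutation term is sound and genuinely different from the paper's argument. The paper cuts $\mathbb{T}^{z'}$ into $N(z,z')$ i.i.d.\ blocks of height $z$, applies the strong law of large numbers twice (to $T(z')/N(z,z')$ and to $\phi(\mathbb{T}^{z'},f^z)/N(z,z')$), and absorbs \emph{all} mutations above height $z$ --- origin branch included --- into the additive error $N(z,z')$, each block root carrying at most one expressed allele. You instead observe that the cluster of a branch mutation at $(t_0,h_0)$ is a rightward-looking, shift-equivariant functional of the marked Poisson process, independent of $z$; the big-cluster branch mutations then form a stationary ergodic factor point process $\Psi$, Mecke's formula identifies its intensity as $\int \widetilde f(h)\,\overline{\nu}(h)\,\mu(\dd h)=\Lambda((q,\infty))$, and Birkhoff's theorem together with $T(z)\uparrow\infty$ gives the main limit. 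Modulo standard facts (mixing of Poisson processes under time shifts, ergodicity of factors, the point-process form of Birkhoff), this part is correct and more conceptual than the paper's double-SLLN.

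The genuine gap is the origin-branch count $O_z$, precisely where you write ``the main technical obstacle'': what you sketch there cannot be completed as stated. First, $O_z$ cannot be waved away: for $\overline{\nu}(x)=\e^{-x}$ and $\underline{\mu}(x)=\log(1+x)$ one gets $\E[O_z]\sim \log z\to\infty$. Second, the input you propose, $\E[O_z]=o(W(z))$ by dominated convergence, is too weak for a Markov/Borel--Cantelli/monotonicity scheme. Monotone interpolation requires $O_{z_{k+1}}\le \epsilon T(z_k)$ for all large $k$; since $\PP(T(z_k)<\delta W(z_k))=1-\e^{-\delta}$, Borel--Cantelli confines you to events $\{T(z_k)\ge \delta_k W(z_k)\}$ with $\sum_k\delta_k<\infty$, and you then need $\E[O_{z_{k+1}}]\lesssim \delta_k W(z_k)$, i.e.\ the summability of $\frac{\E[O_{z_{k+1}}]}{W(z_{k+1})}\cdot\frac{W(z_{k+1})}{W(z_k)}$. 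If $\E[O_z]/W(z)$ decayed like $1/\log W(z)$ --- which is perfectly compatible with $o(W(z))$ --- no subsequence works: ratio-bounded $(z_k)$ make the sum diverge like $\sum_k 1/(k\log k)$, while sparse $(z_k)$ blow up the ratio factor. So the scheme needs a quantitative rate that dominated convergence does not supply; the rate is in fact available (one can show $\E[O_z]=O(\log W(z))$ using $W\e^{-\underline{\mu}}\le W^\mu$ and an integration by parts), but proving it is a missing ingredient, not a routine detail. A cleaner repair stays inside the paper's toolbox: every leaf whose most recent mutation lies on the origin branch has no non-origin mutation on its lineage, so the (disjoint) origin clusters of measure $>q$ sit inside $\widetilde{R}\cap[0,T(z))$ and $q\,O_z\le \lambda(\widetilde{R}\cap[0,T(z)))$; by Theorem \ref{thm_cpp_clonal} and Corollary \ref{cor_taille_pop}, $\widetilde{R}$ is the range of a drift-one subordinator $\sigma$ which is either killed (so $\lambda(\widetilde{R})<\infty$ a.s.) or satisfies $\E[\sigma_1]=\lim_{\lambda\downarrow 0}\phi(\lambda)/\lambda=\infty$, because $\lambda/\phi(\lambda)=\int \lambda\,\e^{-\underline{\mu}(x)}/(\lambda+\overline{\nu}(x))\,\mu(\dd x)\to 0$ by dominated convergence; in either case $\lambda(\widetilde{R}\cap[0,T])/T\to 0$ a.s., hence $O_z/T(z)\to 0$. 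Without this step (or an equally sharp substitute) your proof is incomplete, whereas the paper's block decomposition never has to face the issue.
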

\begin{remark}
Recall that $\Phi_z((q,\infty))$ is the number of alleles carried by a population of leaves of measure larger than $q$ in the tree $\mathbb{T}^z$, and $T(z)$ is the total size of the population of $\mathbb{T}^z$.
The result is a strong law a large numbers: it shows that the number of small families (with a fixed size) grows linearly with the total measure of the tree at a constant speed given by the measure $\Lambda$ defined by \eqref{eq_limit_spectrum} as the limiting allele frequency spectrum intensity.
%The Proposition does not account for the large families (with size increasing with $z$) of the tree.
\end{remark}

\begin{proof}
We will use the law of large numbers several times.
Let us first introduce some notation.
For $z>0$, define $(T_i(z))_{i\ge 1}$ as the increasing sequence of first components of the atoms of $\mathcal{N}$ with second component larger than $z$, that is $T_1(z) = T(z)$ and for any $i\ge 1$
\begin{align*}
%&T_1(z) = \inf \{t > 0, \; \exists x > z, \, (t,x) \in \mathcal{N}\},\\
&T_{i+1}(z) = \inf \{t > T_i(z), \; \exists x > z, \, (t,x) \in \mathcal{N}\}.
\end{align*}
For $z < z'$, let $N(z,z'):=\#\{(t,x)\in \mathcal{N}: t\le T(z'), x >z\}$, that is the unique number $n$ such that
\[T_n(z) = T(z').\]
Notice that the assumptions $\overline{\nu}(z) \in (0,\infty)$ for all $z>0$ and $\nu(\{\infty\}) =0$ imply that $T(z')\to\infty$ and $N(z, z')\to\infty$  as $z'\to\infty$, for a fixed $z$.
    Because the times $(T_{i+1}(z) - T_i(z))_{i\geq 1}$ are \textit{i.i.d.} exponential random variables with mean $W(z)$ and since we have
    \[T(z') = T(z) + \sum_{i=2}^{N(z,z')} (T_{i+1}(z) - T_i(z)),\]
    it is clear by the strong law of large numbers % and from the fact that $N(z,z') \to \infty$ as $z'\to \infty$, 
    that
\[ \frac{T(z')}{N(z,z')} \underset{z' \to \infty}\longrightarrow W(z) \qquad\textit{a.s.}\]
Also, write $\mathbb{T}^z_1, \ldots, \mathbb{T}^z_{N(z,z')}$ for the sequence of subtrees of height $z$ within $\mathbb{T}^{z'}$ that are separated by the branches higher than $z$.
That is, $\mathbb{T}^z_i$ is the ultrametric tree generated by the points of $\mathcal{N}$ with first component between $T_{i-1}(z)$ and $T_i(z)$.
From basic properties of Poisson point processes, they are \textit{i.i.d.} and their distribution is that of $\mathbb{T}^{z}$.

Now, write $h(\mathbb{T})$ for the height of an ultrametric tree (i.e., the distance between the root and any of its leaves), and take any non-negative, measurable function $f$ of simple trees, such that
\begin{equation} \label{eq_prop_tronque}\tag{$\ast$}
f(\mathbb{T}) = 0 \text{ if } h(\mathbb{T}) > z.
\end{equation}
Recall the definition of $\phi(\mathbb{T}, f)$. % as the sum of the map $f$ applied to subtrees of $\mathbb{T}$ growing from a mutation:
%\[ \phi(\mathbb{T}^{z'},f)  =  \sum_{\underset{\text{\footnotesize mutation}}{m \in \mathbb{T}^{z'}}} f(\{\text{subtree of $\mathbb{T}^{z'}$ growing from } m\}). \]
Since $f$ satisfies \eqref{eq_prop_tronque}, we can write
\begin{equation} \label{eq_phi_f_repet}
 \phi(\mathbb{T}^{z'},f) = \sum_{i=1}^{N(z,z')} \phi(\mathbb{T}^z_i, f). 
\end{equation}
Therefore, again by the strong law of large numbers, we have the following convergence
\begin{equation} \label{eq_phi_f_conv}
\frac{\phi(\mathbb{T}^{z'},f)}{N(z,z')} \underset{z'\to \infty}\longrightarrow \E [\phi(\mathbb{T}^{z},f)] = \psi(z, f) \qquad \textit{a.s.}
\end{equation}
Combining the two convergence results, it follows that
\[ \frac{\phi(\mathbb{T}^{z'},f)}{T(z')} \underset{z'\to \infty}\longrightarrow \frac{\psi(z, f)}{W(z)} \qquad \textit{a.s.} \]

Let us apply this to the function $f(\mathbb{T}) = \1_{\Leb(R)>q}$.
This function $f$ does not satisfy \eqref{eq_prop_tronque} for any $z>0$, so we cannot apply \eqref{eq_phi_f_conv} directly because \eqref{eq_phi_f_repet} does not hold.
However, we can artificially truncate $f$ by defining the restriction $f^z$:
\[f^z(\mathbb{T}) := f(\mathbb{T}) \1_{h(\mathbb{T})<z},\]
which does satisfy \eqref{eq_prop_tronque}.
Now since $f^z\leq f$, we have the inequality between random variables
\[\phi(\mathbb{T}^{z'}, f^z) \leq \phi(\mathbb{T}^{z'},f),\]
and by taking limits,
\[ \frac{\psi(z, f)}{W(z)} \leq \liminf_{z'\to\infty} \frac{\phi(\mathbb{T}^{z'}, f)}{T(z')} \qquad \textit{a.s.}\]
But we have $\psi(z, f)=\Lambda_z((q,\infty))$ and as a consequence of Proposition \ref{prop_allelic_freq}, we have
\begin{align*}
\frac{\Lambda_z((q,\infty))}{W(z)} &= \frac{\e^{-\underline{\mu}(z)}}{W^\mu(z)}\e^{-q/W^\mu (z)} + \int_{[0,z)}  \frac{\e^{-\underline{\mu}(x)}}{W^\mu(x)}\e^{-q/W^\mu (x)} \mu(\dd x)\\
& \underset{z \to \infty}\longrightarrow \int_{\Rplus} \frac{\e^{-\underline{\mu}(x)}}{W^\mu(x)}\e^{-q/W^\mu (x)} \mu(\dd x),
\end{align*}
which is $\Lambda((q,\infty))$ by definition.
Therefore, we now have the inequality
\[\Lambda((q,\infty)) \leq \liminf_{z'\to\infty} \frac{\phi(\mathbb{T}^{z'}, f)}{T(z')} \qquad \textit{a.s.}\]

The converse inequality stems from a simple remark.
There are at most $N(z,z')$ mutations of height greater than $z$ giving rise to an allele carried by some leaves of $\mathbb{T}^{z'}$.
This is simply because a population of $n$ individuals can exhibit at most $n$ different alleles.
Therefore, we have
\[\phi(\mathbb{T}^{z'}, f) \leq \phi(\mathbb{T}^{z'},f^z) + N(z,z'),\]
which gives by taking limits
\[ \limsup_{z'\to\infty} \frac{\phi(\mathbb{T}^{z'}, f)}{T(z')} \leq \frac{\psi(z, f)+1}{W(z)} \underset{z\to\infty}\longrightarrow \Lambda((q,\infty)) \quad \textit{a.s.}\]
We can finally conclude
\[\frac{\phi(\mathbb{T}^{z}, f)}{T(z)} \underset{z\to\infty}\longrightarrow \Lambda((q,\infty)) \qquad \textit{a.s.},\]
which is the announced result.
\end{proof}

\section{The Clonal Tree Process} \label{section_couplage}

In this section we consider the clonal subtree $A^z$ of a random tree $\mathbb{T}^z$ with distribution CPP($\nu, \mu, z$), where $\nu, \mu$ are measures satisfying assumptions \eqref{hyp_mu_nu} and $z>0$.
We further assume $\nu(\Rplus) = \infty$, that is we ignore the case when $\mathbb{T}^z$ is a finite tree almost surely.
We will focus on the case when $\mu(dx)=\theta\,dx$.

\subsection{Clonal Tree Process}

%A natural way of letting mutations along the branches of a tree is to consider a Poisson point process of fixed rate $\theta$, that is we study here the case $\mu = \theta \dd x$.
There is a natural coupling in $\theta$ of the Poisson processes of mutations, in such a way that the sets of mutations are increasing in $\theta$ for the inclusion.
Let $\mathbb{M}$ denote a Poisson point process with Lebesgue intensity on $\R_+^2$, and define for $\theta \geq 0$,
\[\mathbb{M}^\theta := \mathbb{M}([0, \theta] \times \point).\]
Then $\mathbb{M}^\theta$ is a Poisson point process on $\R_+$ with intensity $\theta \,dx$, and the sequence of supports of  $\mathbb{M}^\theta$ increases with $\theta$. % (voir figure \ref{fig_couplage}).
%\begin{figure}[ht]
%\centering
%%\def\svgwidth{0.6\textwidth}
%\input{poisson_couple.pdf_tex}
%%\includegraphics[width=3cm]{poisson_couple.pdf}
%\caption{Processus de Poisson couplés} \label{fig_couplage}
%\end{figure}
Let us use this idea to couple mutations with different intensities on the random tree $\mathbb{T}^z$. Recall the construction of a CPP with a Poisson point process $\mathcal{N}$ in Section \ref{section_constr}.
For each point $N = (t,x)$ of $\mathcal{N}\cup\{(0,z)\}$, let $M_N$ be a Poisson point process on $\R_+\times[0,x]$ with Lebesgue intensity.
For fixed $\theta\geq 0$, we get the original construction with $\mu (dx)= \theta \,dx$ when considering
\[M^\theta_N := M_N([0, \theta] \times \point).\]
Therefore a natural coupling of mutations of different intensities $(M^\theta)_{\theta \in \R_+}$ is defined on the random tree $\mathbb{T}^z$.
%
%Chaque point $(y, \theta)$ de cette ensemble est appelé mutation est arrive à la hauteur $y$, au niveau $\theta$ sur la branche issue de $N$.
%Pour $\theta > 0$ fixé, en considérant tous les points de niveau inférieur à $\theta$, on a donc des mutations qui arrivent à taux $\theta$ le long des branches de l'arbre.
%Cette construction permet un couplage selon le paramètre $\theta$, ou l'ensemble (discret) des mutations sur l'arbre est croissant selon $\theta$.
%Pour $t\in\R_+$, on définit sa lignée : l'unique ensemble des $(t_i, x_i)\in \mathcal{N}$ qui "descendent jusqu'à $(t,0)$".
%On pose alors $M_t(\theta) := \sum_i M_{(t_i, x_i)}([0,\theta]\cap[x_{i+1}, x_{i}])$, le nombre de mutation au niveau $\theta$ sur la lignée de $t$.
%
%
%Dans la suite on considère en particulier $\nu = \frac{\dd x}{x^2}$, ce qui correspond à l'arbre Brownien réduit au niveau $z$, conditionné à être non vide.
%%donne l'arbre engendré par les hauteurs des excursions du Brownien standard
%Aussi on considère $\mu(\dd x) = \theta \dd x$ pour $\theta \geq 0$, ce qui correspond à ...
%
%Aussi on pose $R_\theta$ l'ensemble des lignées clonales de l'origine, au niveau de mutation $\theta$ :
%\[R_\theta = \{t \in \R_+, M_t^\theta(\R_+) = 0\} .\]
%Alors le couplage nous permet de voir que ces ensembles sont imbriqués les uns dans les autres : pour $0 < \theta < \theta'$, on a
%\[R_{\theta} \supset R_{\theta'}.\]
Denote $A^z_\theta$ the clonal subtree of height $z$ at mutation level $\theta$, that is the subtree of $\mathbb{T}^z$ defined by
\[A^{z}_{\theta} := \{ x \in \mathbb{T}^z, M^\theta([\![\rho, x]\!]) = 0 \}. \]
It is natural to seek to describe the decreasing process of clonal subtrees $(A^z_\theta)_{\theta\in\R_+}$.
As $\theta$ increases, it is clearly a Markov process since the distribution of $A^z_{\theta+\theta'}$ given $A^z_{\theta}$ is the law of the clonal tree obtained after adding mutations at a rate $\theta'$ along the branches of $A^z_{\theta}$.
We will now study the Markovian evolution of the time-reversed process, as $\theta$ decreases. Its transitions are relatively simple to describe using grafts of trees.

\subsection{Grafts of Real Trees}

Given two real rooted trees $(\mathbb{T}_1, d_1, \rho_1), \,(\mathbb{T}_2, d_2, \rho_2)$, and a graft point $g \in \mathbb{T}_1$, one can define the real rooted tree that is the graft of the root of $\mathbb{T}_2$ on $\mathbb{T}_1$ at point $g$ by
\[\mathbb{T}_1 \oplus_g \mathbb{T}_2 := (\mathbb{T}_1 \sqcup \mathbb{T}_2\setminus\!\{\rho_2\}, d, \rho_1),\]
with the new distance $d$ defined as follows. For any $x,y\in \mathbb{T}_1 \sqcup \mathbb{T}_2$, 
\[d(x,y) := d_i(x,y) \quad\text{ if }\quad x,y\in \mathbb{T}_i\text{ for } i \in \{1,2\},\]
and
\[d(x,y) := d_1(x,g) + d_2(\rho_2, y)\quad\text{ if }\quad x\in \mathbb{T}_1, y\in\mathbb{T}_2.\]
For real simple trees, this graft has a nice representation when the graft point is a leaf of the first tree.
\begin{definition}
For a simple tree $A = (\mathcal{T}, \alpha, \omega)$, define the \textbf{buds} of $A$ as the set $\mathcal{B}(A)$ of leaves of $\mathcal{T}$ that live a finite time
\[\mathcal{B}(A) := \{b \in \mathcal{T}, \; b0 \notin \mathcal{T}, \; \omega(b) < \infty\}.\]
For two simple trees $A_i = (\mathcal{T}_i, \alpha_i, \omega_i)$ with $i \in \{1,2\}$, and for $b \in \mathcal{B}(A_1)$, we define the \textbf{graft} of $A_2$ on $A_1$ on the bud $b$, denoted $A_1 \oplus_b A_2$ by:
\[\mathcal{T} := \mathcal{T}_1 \cup b\mathcal{T}_2,\]
\[\alpha(b) := \alpha_1(b), \quad \omega(b) := \omega_1(b)+\zeta_2(\emptyset),\]
\[\forall u \in \mathcal{T}_1\setminus\{b\}, \quad \alpha(u) := \alpha_1(u), \quad \omega(u) := \omega_1(u), \]
\[\forall u \in \mathcal{T}_2\setminus\{\emptyset\}, \quad
\begin{cases}
\alpha(bu) := \omega(b)+(\alpha_2(u)-\omega_2(\emptyset)), \\ \omega(bu) := \alpha(bu) + \zeta_2(u),
\end{cases} \]
\[A_1 \oplus_b A_2 := (\mathcal{T}, (\alpha(u), \zeta(u), \omega(u))_{u\in\mathcal{T}}).\]
It is then clear that $\mathcal{B}(A_1 \oplus_b A_2) := \mathcal{B}(A_1)\setminus\{b\}\cup b\mathcal{B}(A_2)$.
See Figure \ref{fig_greffe} for an example.
\end{definition}

\begin{figure}[ht]
\def\svgwidth{\textwidth}
\scriptsize
% !TeX encoding = UTF-8
%% Creator: Inkscape inkscape 0.91, www.inkscape.org
%% PDF/EPS/PS + LaTeX output extension by Johan Engelen, 2010
%% Accompanies image file '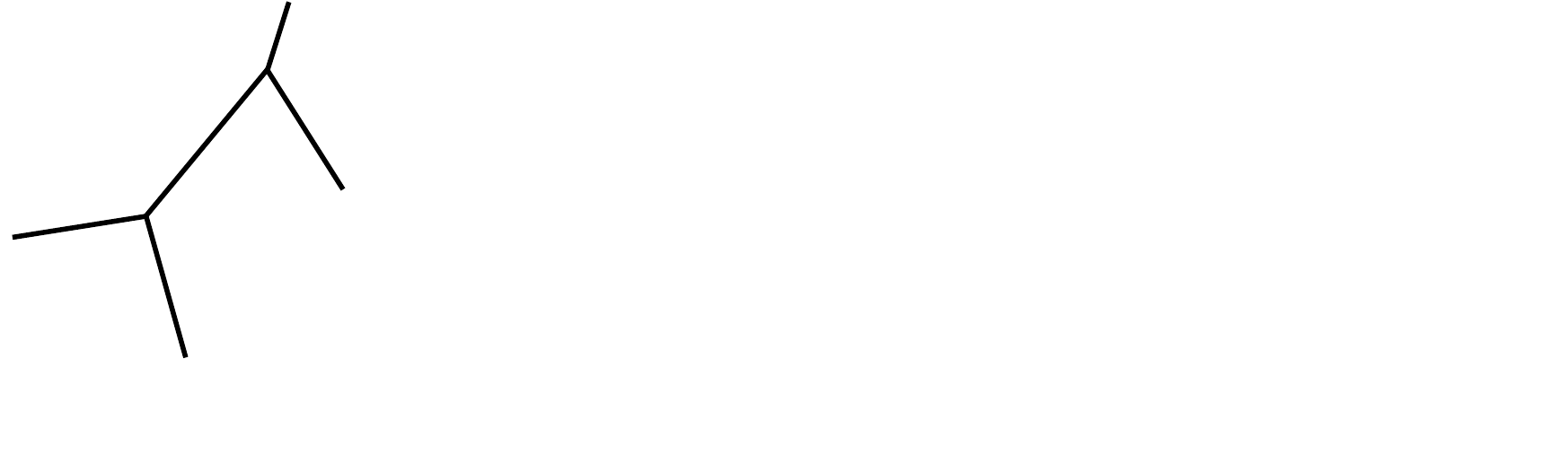' (pdf, eps, ps)
%%
%% To include the image in your LaTeX document, write
%%   \input{<filename>.pdf_tex}
%%  instead of
%%   \includegraphics{<filename>.pdf}
%% To scale the image, write
%%   \def\svgwidth{<desired width>}
%%   \input{<filename>.pdf_tex}
%%  instead of
%%   \includegraphics[width=<desired width>]{<filename>.pdf}
%%
%% Images with a different path to the parent latex file can
%% be accessed with the `import' package (which may need to be
%% installed) using
%%   \usepackage{import}
%% in the preamble, and then including the image with
%%   \import{<path to file>}{<filename>.pdf_tex}
%% Alternatively, one can specify
%%   \graphicspath{{<path to file>/}}
%% 
%% For more information, please see info/svg-inkscape on CTAN:
%%   http://tug.ctan.org/tex-archive/info/svg-inkscape
%%
\begingroup%
  \makeatletter%
  \providecommand\color[2][]{%
    \errmessage{(Inkscape) Color is used for the text in Inkscape, but the package 'color.sty' is not loaded}%
    \renewcommand\color[2][]{}%
  }%
  \providecommand\transparent[1]{%
    \errmessage{(Inkscape) Transparency is used (non-zero) for the text in Inkscape, but the package 'transparent.sty' is not loaded}%
    \renewcommand\transparent[1]{}%
  }%
  \providecommand\rotatebox[2]{#2}%
  \ifx\svgwidth\undefined%
    \setlength{\unitlength}{502.77079425bp}%
    \ifx\svgscale\undefined%
      \relax%
    \else%
      \setlength{\unitlength}{\unitlength * \real{\svgscale}}%
    \fi%
  \else%
    \setlength{\unitlength}{\svgwidth}%
  \fi%
  \global\let\svgwidth\undefined%
  \global\let\svgscale\undefined%
  \makeatother%
  \begin{picture}(1,0.29011224)%
    \put(0,0){\includegraphics[width=\unitlength,page=1]{greffe.pdf}}%
    \put(0.18929089,0.24906074){\color[rgb]{0,0,0}\makebox(0,0)[lb]{\smash{$\emptyset$}}}%
    \put(0.07961298,0.16666022){\color[rgb]{0,0,0}\makebox(0,0)[lb]{\smash{$0$}}}%
    \put(0.22111456,0.1785941){\color[rgb]{0,0,0}\makebox(0,0)[lb]{\smash{$1$}}}%
    \put(-0.00165096,0.14733871){\color[rgb]{0,0,0}\makebox(0,0)[lb]{\smash{$00$}}}%
    \put(0.12564363,0.06096026){\color[rgb]{0,0,0}\makebox(0,0)[lb]{\smash{$01$}}}%
    \put(0,0){\includegraphics[width=\unitlength,page=2]{greffe.pdf}}%
    \put(0.27352028,0.12023957){\color[rgb]{0,0,0}\makebox(0,0)[lb]{\smash{$\emptyset$}}}%
    \put(0.2757934,0.0537509){\color[rgb]{0,0,0}\makebox(0,0)[lb]{\smash{$0$}}}%
    \put(0.35592077,0.13274175){\color[rgb]{0,0,0}\makebox(0,0)[lb]{\smash{$1$}}}%
    \put(0.41388529,0.08102833){\color[rgb]{0,0,0}\makebox(0,0)[lb]{\smash{$10$}}}%
    \put(0.40251971,0.17479441){\color[rgb]{0,0,0}\makebox(0,0)[lb]{\smash{$11$}}}%
    \put(0,0){\includegraphics[width=\unitlength,page=3]{greffe.pdf}}%
    \put(0.63584707,0.16571866){\color[rgb]{0,0,0}\makebox(0,0)[lb]{\smash{$0$}}}%
    \put(0.55458316,0.14639715){\color[rgb]{0,0,0}\makebox(0,0)[lb]{\smash{$00$}}}%
    \put(0.68187773,0.0600187){\color[rgb]{0,0,0}\makebox(0,0)[lb]{\smash{$01$}}}%
    \put(0,0){\includegraphics[width=\unitlength,page=4]{greffe.pdf}}%
    \put(0.79797469,0.14011307){\color[rgb]{0,0,0}\makebox(0,0)[lb]{\smash{$1$}}}%
    \put(0.80024774,0.0736244){\color[rgb]{0,0,0}\makebox(0,0)[lb]{\smash{$10$}}}%
    \put(0.86929372,0.15119456){\color[rgb]{0,0,0}\makebox(0,0)[lb]{\smash{$11$}}}%
    \put(0.9383397,0.10090183){\color[rgb]{0,0,0}\makebox(0,0)[lb]{\smash{$110$}}}%
    \put(0.92697407,0.19466793){\color[rgb]{0,0,0}\makebox(0,0)[lb]{\smash{$111$}}}%
    \put(0.0940415,0.00469084){\color[rgb]{0,0,0}\makebox(0,0)[lb]{\smash{$A_1$}}}%
    \put(0.3224898,0.00469084){\color[rgb]{0,0,0}\makebox(0,0)[lb]{\smash{$A_2$}}}%
    \put(0.73847026,0.00469084){\color[rgb]{0,0,0}\makebox(0,0)[lb]{\smash{$A_1 \oplus_1 A_2$}}}%
    \put(0,0){\includegraphics[width=\unitlength,page=5]{greffe.pdf}}%
    \put(0.74481236,0.24906074){\color[rgb]{0,0,0}\makebox(0,0)[lb]{\smash{$\emptyset$}}}%
  \end{picture}%
\endgroup%
\caption{Simple tree graft} \label{fig_greffe}
\end{figure}

%More general grafts may be defined, which we will use to describe the evolution of the process of clonal trees $(A_\theta^z)_{\theta\geq 0}$.
%\begin{definition}
%Let $A$ be a real simple tree, $B$ a subset of $\mathcal{U}$, and $(A_b)_{b\in B}$ simple trees. 
%For $b\in B$ let $\{b_1, \ldots, b_{n(b)}\} = \{b' \in B, \; b'\prec b\}$, indexed such that $b_1 \prec b_2 \prec \ldots \prec b_{n(b)} = b$.
%Suppose that for each $b\in B$, for all $1\leq i \leq n(b)$, $b_i$ is effectively a bud of $A\oplus_{b_1} A_{b_1} \oplus_{b_2} \cdots \oplus_{b_{i-1}} A_{b_{i-1}}$.
%Then the \textbf{recursive graft} of $(A_b)_{b\in B}$ on $A$ is defined by:
%\[ A \oplus_B (A_b)_{b\in B} := \bigcup_{b\in B} A\oplus_{b_1} A_{b_1} \oplus_{b_2} \cdots \oplus_{b} A_{b}, \]
%where the union is well defined since the trees are compatible.
%\end{definition}
%On admet sans trop expliciter l'arbre simple final que de telles greffes sont bien définies.

%\begin{remark}
%Ici, nos arbres poussent de $z$ (la hauteur de la racine) jusqu'à $0$ (l'axe des abscisses).
%Quitte à inverser l'axe des ordonnées, on s'autorise à les considérer comme des arbres simples, que l'on peut greffer les uns aux autres.
%\end{remark}

\subsection{Evolution of the Clonal Tree Process}

We study the increasing clonal tree process as we remove mutations (decreasing $\theta$).
We therefore reverse time by denoting $\eta = -\ln \theta$, and defining $X^z_\eta := A^z_{\e^{-\eta}}$.
Denote $\mathbb{Q}^z_\eta$ the distribution of $X^z_\eta$ with values in the set of reversed (i.e., with time flowing from $z$ to 0) simple binary trees. 
See Figure \ref{fig_arbre_pousse} for a sketch of the tree growth process.
The increasing process $(X^z_\eta)_{\eta\in \R}$ is nicely described in terms of grafts. 
\begin{theorem} \label{thm_arbre_markov}
~\begin{enumerate}[(i)]
\item
The process $(X^z_\eta)_{\eta\in\R}$ is a time-inhomogeneous Markov  process, whose transitions conditional on $X^z_{\eta}$ can be characterized as follows.
\begin{itemize}
\item The buds of $X^z_{\eta}$ are the leaves $b$ of height $\omega(b)$.
Independently of the others, each bud $b$ is given an exponential clock $T_b$ of parameter $1$.
\item At time $\eta' = \eta + T_b$, a tree is grafted on the bud $b$, following the distribution $\mathbb{Q}^{\omega(b)}_{\eta'}$, and each newly created bud $b'$ is given an independent exponential clock $T_{b'}$ of parameter $1$.
\end{itemize}
\item
The infinitesimal generator evaluated at a function $\phi$ of simple trees which depends only on a finite number of generations ({i.e.} such that the property $\exists \, n\geq 0, \; \phi(\point) = \phi(\point_{|n})$ holds) can be written as follows
\[\mathcal{L}_\eta \phi(A) = \sum_{b \in \mathcal{B}(A)} \left ( \mathbb{Q}^{\omega(b)}_\eta  [\phi(A \oplus_b Y)] - \phi(A) \right ),\]
where $Y$ is the random tree drawn under the probability measure $\mathbb{Q}^{\omega(b)}_\eta$.
\item Write $\tau_z$ for the first time the clonal tree process reaches the boundary, that is the first time there is a leaf $x\in X^z_\eta$ with $d(\rho, x) = z$, (where $d$ is the distance in the real tree $X^z_\eta$):
\[\tau_z= \inf\{ \eta \in \R: \,\exists x\in X^z_\eta,\, d(\rho, x) = z \}.\]
Then the distribution of $\tau_z$ is given by
\[\PP(\tau_z \leq \eta) = \frac{W(z)\,\e^{-\e^{-\eta} z}}{W_\eta(z)}, \]
where as previously $W(z) = \overline{\nu}(z)^{-1}$, and
\[W_\eta (z) = W(0) + \int_{(0,z]}\e^{-\e^{-\eta} x} \dd W(x),\]
that is $W_\eta = W^\mu$ with $\mu(dx) = \e^{-\eta}\,dx$.
\end{enumerate}
\end{theorem}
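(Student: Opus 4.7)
The plan is to use the coupling via the two-dimensional Poisson point process $\mathbb{M}$: each mutation of $\mathbb{T}^z$ carries an intensity coordinate, and reducing $\theta$ amounts to discarding those with too-large $\theta$-coordinate. For each bud $b$ of the current clonal tree $X^z_\eta$, there is a well-defined ``blocking mutation'' of $\mathbb{T}^z$ sitting closest to $b$ along $b$'s lineage extension, namely the mutation with smallest distance from $b$ among those with $\theta$-coordinate $\leq \e^{-\eta}$. Its $\theta$-coordinate $\theta_b$ determines when bud $b$ will grow, namely at $\eta_b := -\ln \theta_b$.

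For part (i), I would argue in three steps. First, by the elementary fact that a point of a 2D Poisson process has $\theta$-coordinate uniform on $[0, \e^{-\eta}]$ conditional on its spatial coordinate, $\theta_b$ is uniform on $[0, \e^{-\eta}]$; a direct computation then shows that $T_b := \eta_b - \eta$ is exponential with rate $1$. Second, since distinct buds correspond to mutations on disjoint lineage segments of $\mathbb{T}^z$, and $\mathbb{M}$ is independent on disjoint sets, the family $(T_b)_{b \in \mathcal{B}(X^z_\eta)}$ consists of mutually independent variables, independent---conditional on $X^z_\eta$---of the past. Third, the subtree that appears at time $\eta'=\eta+T_b$ above the removed blocking mutation is, by the usual CPP construction and the independence of $\mathbb{M}$ on disjoint regions, an independent CPP$(\nu, \omega(b))$ endowed with its own mutations at level $\e^{-\eta'}$, so its clonal subtree has distribution $\mathbb{Q}^{\omega(b)}_{\eta'}$. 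The Markov property and the transition description follow.

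Part (ii) is then routine: for $\phi$ depending on only the first $n$ generations, only the finitely many buds in those first $n$ generations affect $\phi(X^z_\cdot)$, and each contributes independently through an exponential(1) clock followed by a graft of an independent $\mathbb{Q}^{\omega(b)}_\eta$-tree. The standard infinitesimal-generator computation for such a superposition of rate-$1$ clocks then yields the stated formula. For part (iii), one observes that $\{\tau_z \leq \eta\}$ is precisely the event that there exists a mutation-free lineage up to depth $z$ in a CPP$(\nu, \mu, z)$ with mutation measure $\mu(dx) = \e^{-\eta}\,dx$; applying Proposition \ref{prop_proba_clone} with this $\mu$, and noting that $\underline{\mu}(z) = \e^{-\eta}z$ and $W^\mu = W_\eta$ by definition, gives the stated formula directly.

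I expect the main obstacle to lie in part (i): one must decompose the joint Poisson data $(\mathcal N, \mathbb{M})$ into the piece encoding the clonal tree $X^z_\eta$ (with its buds and bud-heights) and the pieces encoding, for each bud, the independent subtrees that will be revealed as mutations are removed, and then rigorously verify the conditional independence. Once this decomposition is in place, the exponential clocks, the joint independence across buds, and the law $\mathbb{Q}^{\omega(b)}_{\eta'}$ of the grafted subtree all follow from the standard scaling and independence properties of Poisson point processes.
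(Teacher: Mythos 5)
Your proposal is correct in substance, and its probabilistic core is the same as the paper's: the ``blocking mutation'' whose $\theta$-coordinate, conditional on its spatial position, is uniform on $[0,\e^{-\eta}]$ (hence an exponential clock of parameter $1$ after the reparametrisation $\eta=-\ln\theta$); Poisson independence on disjoint regions to decouple the buds; the observation that the subtree hanging below a bud is a fresh CPP$(\nu,\omega(b))$ with fresh coupled mutations, so the grafted tree has law $\mathbb{Q}^{\omega(b)}_{\eta'}$; finitely many relevant buds for part (ii); and the reduction of part (iii) to Proposition \ref{prop_proba_clone} with $\mu(\dd x)=\e^{-\eta}\dd x$, which is word for word the paper's argument. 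The genuine difference is where the bookkeeping for part (i) is done, and it sits exactly at the point you flag as the main obstacle. You propose to decompose the joint data $(\mathcal{N},\mathbb{M})$ directly in the comb picture; there, the regions of the plane coding the clonal tree and the sub-combs below its buds are \emph{random} (excursion intervals of the comb below the random bud heights), so the conditional independence across buds and of the future given $\mathcal{F}_\eta$ requires a strong-Markov/excursion decomposition of the planar Poisson process, i.e.\ repeated use of the CPP branching property (in the paper this is Proposition \ref{prop_bd_process_charac}(iv), proved via Lemma \ref{lemma_link_bd_cpp+meas}). The paper sidesteps this by first invoking Lemma \ref{lemma_link_bd_cpp} to re-encode the CPP as a reversed pure-birth tree: the tree and its coupled mutations become i.i.d.\ Poisson processes $(B_u)$ and $(M_u)$ indexed by the vertices $u$ of the \emph{deterministic} complete binary tree, the clonal process $Y_\eta$ is an explicit functional of these, and distinct buds $b_i$ involve the pairwise disjoint label families $(B_{b_iu},M_{b_iu})_{u}$, so conditional independence is immediate. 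If you stay in the comb picture you should also address a point the paper makes explicitly: $X^z_\eta$ may have infinitely many buds, so infinitely many grafts occur in any time interval; the transition description in (i) must therefore be read bud-by-bud rather than as a first-jump construction, and this is precisely why (ii) restricts to functionals depending on finitely many generations. Both routes work; the paper's label-indexed representation is the device that turns your ``main obstacle'' into a routine verification, so if you carry out your plan you will essentially have to reprove that representation along the way.
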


\begin{figure}[ht]
\centering
\def\svgwidth{\textwidth}
\scriptsize
% !TeX encoding = UTF-8
%% Creator: Inkscape 0.91_64bit, www.inkscape.org
%% PDF/EPS/PS + LaTeX output extension by Johan Engelen, 2010
%% Accompanies image file '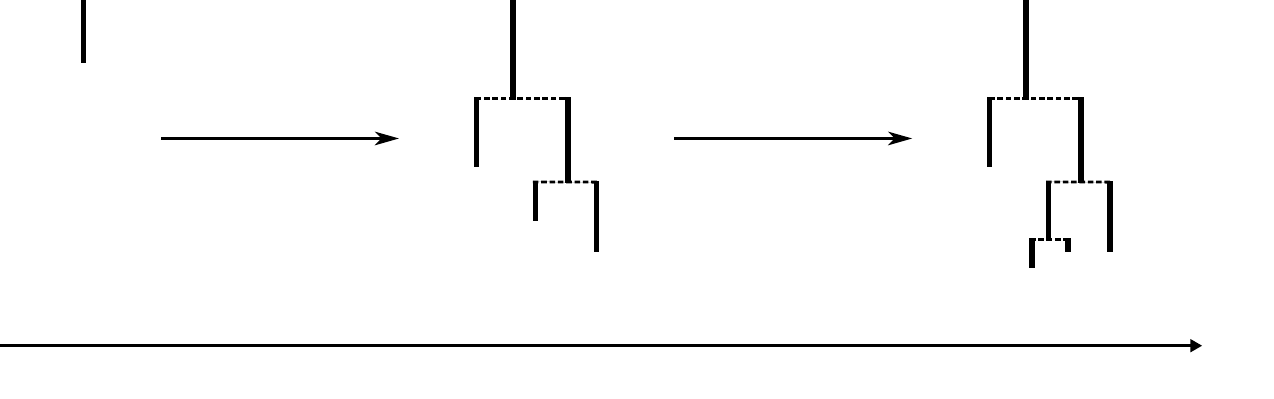' (pdf, eps, ps)
%%
%% To include the image in your LaTeX document, write
%%   \input{<filename>.pdf_tex}
%%  instead of
%%   \includegraphics{<filename>.pdf}
%% To scale the image, write
%%   \def\svgwidth{<desired width>}
%%   \input{<filename>.pdf_tex}
%%  instead of
%%   \includegraphics[width=<desired width>]{<filename>.pdf}
%%
%% Images with a different path to the parent latex file can
%% be accessed with the `import' package (which may need to be
%% installed) using
%%   \usepackage{import}
%% in the preamble, and then including the image with
%%   \import{<path to file>}{<filename>.pdf_tex}
%% Alternatively, one can specify
%%   \graphicspath{{<path to file>/}}
%% 
%% For more information, please see info/svg-inkscape on CTAN:
%%   http://tug.ctan.org/tex-archive/info/svg-inkscape
%%
\begingroup%
  \makeatletter%
  \providecommand\color[2][]{%
    \errmessage{(Inkscape) Color is used for the text in Inkscape, but the package 'color.sty' is not loaded}%
    \renewcommand\color[2][]{}%
  }%
  \providecommand\transparent[1]{%
    \errmessage{(Inkscape) Transparency is used (non-zero) for the text in Inkscape, but the package 'transparent.sty' is not loaded}%
    \renewcommand\transparent[1]{}%
  }%
  \providecommand\rotatebox[2]{#2}%
  \ifx\svgwidth\undefined%
    \setlength{\unitlength}{366.78319255bp}%
    \ifx\svgscale\undefined%
      \relax%
    \else%
      \setlength{\unitlength}{\unitlength * \real{\svgscale}}%
    \fi%
  \else%
    \setlength{\unitlength}{\svgwidth}%
  \fi%
  \global\let\svgwidth\undefined%
  \global\let\svgscale\undefined%
  \makeatother%
  \begin{picture}(1,0.31283045)%
    \put(0,0){\includegraphics[width=\unitlength,page=1]{arbre_pousse.pdf}}%
    \put(0.06036935,0.24050873){\color[rgb]{0,0,0}\makebox(0,0)[lb]{\smash{$\emptyset$}}}%
    \put(0.36950378,0.16117994){\color[rgb]{0,0,0}\makebox(0,0)[lb]{\smash{$1$}}}%
    \put(0.40855569,0.12019154){\color[rgb]{0,0,0}\makebox(0,0)[lb]{\smash{$21$}}}%
    \put(0.45856827,0.09608857){\color[rgb]{0,0,0}\makebox(0,0)[lb]{\smash{$22$}}}%
    \put(0.12545105,0.17583167){\color[rgb]{0,0,0}\makebox(0,0)[lb]{\smash{$\emptyset$ grows at time $\eta_2$}}}%
    \put(0.79086195,0.08328368){\color[rgb]{0,0,0}\makebox(0,0)[lb]{\smash{$211$}}}%
    \put(0.82391723,0.0969243){\color[rgb]{0,0,0}\makebox(0,0)[lb]{\smash{$212$}}}%
    \put(0.05243774,0.00899966){\color[rgb]{0,0,0}\makebox(0,0)[lb]{\smash{$\eta_1$}}}%
    \put(0.77172159,0.16117994){\color[rgb]{0,0,0}\makebox(0,0)[lb]{\smash{$1$}}}%
    \put(0.86748448,0.09746646){\color[rgb]{0,0,0}\makebox(0,0)[lb]{\smash{$22$}}}%
    \put(0,0){\includegraphics[width=\unitlength,page=2]{arbre_pousse.pdf}}%
    \put(0.40716407,0.00899966){\color[rgb]{0,0,0}\makebox(0,0)[lb]{\smash{$\eta_2$}}}%
    \put(0.81124357,0.00899966){\color[rgb]{0,0,0}\makebox(0,0)[lb]{\smash{$\eta_3$}}}%
    \put(0.51807968,0.17583167){\color[rgb]{0,0,0}\makebox(0,0)[lb]{\smash{$21$ grows at time $\eta_3$}}}%
  \end{picture}%
\endgroup%
\caption{Markovian evolution of an increasing tree process. In this example, the time $\eta_2 - \eta_1$ is an exponential time of parameter $1$ and $\eta_3 - \eta_2$ is an exponential time of parameter $3$.}
\label{fig_arbre_pousse}
\end{figure}
We first state a result that is already interesting in itself, which ensures that CPP trees are reversed pure-birth trees (see next Section for details on birth-death trees and their links with CPPs). We refer the reader to Subsection  \ref{subsec:proof_lemma_link_bd_cpp}, where a more general result is proved.
\begin{lemma} \label{lemma_link_bd_cpp}
  Let $\nu$ and $\mu$ be diffuse measures on $\Rplus$, satisfying assumptions \eqref{hyp_mu_nu} and $\nu(\Rplus)=\infty$.
  Fix $z_0\in \Rplus$ such that $\overline{\nu}(z_0) =1$ and let $J = (0, z_0]$.
  Then for $z\in J$, a CPP$(\nu, z)$ is the genealogy of a reversed ({i.e.} with time flowing from $z$ to $0$) pure-birth process with birth intensity $\beta$ defined as the Laplace-Stieltjes measure associated with the nondecreasing function  $ -\log \overline{\nu}$, started from $z$.%, equipped with mutations at rate $\mu$.
\end{lemma}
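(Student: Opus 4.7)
My plan is to identify CPP$(\nu,z)$ with the reversed pure-birth tree of intensity $\beta$ by verifying the branching Markov property at the top of the tree and then iterating down. First I would show that the topmost branching height $X_1$ of the CPP, defined as the supremum of the second coordinates of atoms of $\mathcal N$ in $[0,T(z))\times(0,z)$, matches the first birth height in a reversed pure-birth process started at $z$ with intensity $\beta$. Viewing atoms of height $\geq x$ as a one-dimensional Poisson process in the first coordinate with rate $\overline\nu(x)$, the competing-exponentials identity yields $\PP(X_1<x)=\overline\nu(z)/\overline\nu(x)$; and since $\beta((x,z])=\log\overline\nu(x)-\log\overline\nu(z)$ by the definition of $\beta$ as the Laplace--Stieltjes measure of $-\log\overline\nu$, this equals $\exp(-\beta((x,z]))$, matching the pure-birth first-birth law.

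Next I would verify the branching decomposition: conditionally on $X_1=x$ and on its location $t_1\in(0,T(z))$, the restrictions of $\mathcal N$ to $[0,t_1)\times(0,\infty]$ and $(t_1,T(z))\times(0,\infty]$ are two independent Poisson processes of intensity $\dd t\otimes\nu$, each stopped at its first atom of height $\geq x$ (at $t_1$ on the left, at $T(z)$ on the right, using that $X_1=x$ is attained uniquely by the diffuseness of $\nu$). This follows from the strong Markov property of Poisson processes applied at $t_1$ together with the defining properties of $t_1$ and $T(z)$ as successive first hitting times of heights $\geq x$ and $\geq z$. Each of these two pieces is precisely the data defining a CPP$(\nu,x)$, so the two subtrees below height $X_1$ are independent copies of CPP$(\nu,x)$, which is exactly the branching property of the reversed pure-birth tree at its first birth.

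These two properties characterize the law of a reversed pure-birth tree of intensity $\beta$ started at $z$. To conclude, I would truncate at height $\epsilon>0$: the number of branching points above $\epsilon$ is $1$ plus a geometric variable with parameter $\overline\nu(z)/\overline\nu(\epsilon)$ (in particular a.s.\ finite), so induction on these finitely many branching points matches the law of the truncated CPP with that of the truncated reversed pure-birth tree, and letting $\epsilon\downarrow 0$ identifies the full trees in distribution. The main obstacle is the bookkeeping in the second step: one must check that conditioning on $t_1$ does not bias either half, and that $t_1$ and $T(z)$ genuinely play the same ``first atom of height $\geq x$'' role for their respective independent Poisson pieces. The normalization $\overline\nu(z_0)=1$ is a convenient choice of time unit and does not really enter the argument.
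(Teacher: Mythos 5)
Your proposal is correct, and its skeleton is the same as the paper's: identify the law of the topmost branching, show that conditionally on it the two sub-combs are independent copies of CPP$(\nu,X_1)$, and conclude by a uniqueness argument for laws with this recursive structure (the paper does exactly this in Subsection \ref{subsec:proof_lemma_link_bd_cpp}, by verifying conditions \textit{(i)--(iv)} of Proposition \ref{prop_bd_process_charac}). The two executions differ in instructive ways. For the branching property, the paper computes the joint law of the triple $(Z, T(z), U)$ (height of the highest atom, total width, its location): $\overline{\nu}(Z)$ has density $\overline{\nu}(z)u^{-2}\1_{u\geq \overline{\nu}(z)}\,\dd u$, conditionally on $Z$ the width $T(z)$ is Gamma$(\overline{\nu}(Z),2)$ with $U$ uniform on $[0,T(z)]$, and this matches the concatenation of two independent CPP$(\nu,Z)$'s. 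You argue structurally instead, via the strong Markov property at $t_1$; this is sound, and the ``no bias'' worry you flag resolves cleanly: since $t_1$ coincides a.s.\ with the first atom of height $\geq x$, the event $\{X_1=x\}$ reads, in terms of the i.i.d.\ height marks of the rate-$\overline{\nu}(x)$ process of atoms of height $\geq x$, as ``first mark $=x$, second mark $\geq z$'', and these marks are independent of the inter-atom gaps and of the atoms of height $<x$, so both stopped pieces keep the unconditioned law of a Poisson process run until an independent Exp$(\overline{\nu}(x))$ time — precisely the data of a CPP$(\nu,x)$. For the uniqueness step, the paper invokes the standalone characterization Proposition \ref{prop_bd_process_charac}, proved by identifying generation-by-generation marginals; your truncation at height $\epsilon$ with induction on the a.s.\ finite number of branchings above $\epsilon$ is a lighter, self-contained substitute that suffices for this measure-free statement (minor bookkeeping: that count is geometric with success parameter $\overline{\nu}(z)/\overline{\nu}(\epsilon)$, not $1$ plus geometric, but only finiteness matters). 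What the paper's heavier route buys is that Proposition \ref{prop_bd_process_charac} also tracks the boundary measure $\mathscr{L}$, so the very same verification — supplemented by the observation that $\mathscr{L}(\partial\mathcal{T})=T(z)$ is exponential with mean $\overline{\nu}(z)^{-1}$ — proves the stronger Lemma \ref{lemma_link_bd_cpp+meas} at no extra cost, whereas your argument would need that additional ingredient to recover the measured version.
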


\begin{proof}[Proof of Theorem \ref{thm_arbre_markov}]
%Let us describe the process $(X^z_\eta)_{\eta\in \R}$ in terms of a reversed simple tree process. % with joint point measures of mutations.
From Lemma \ref{lemma_link_bd_cpp}, we can express the CPP in terms of a pure-birth tree, with time flowing from $z$ to $0$ (but measured from 0 to $z$)  and birth intensity $\dd \beta = \dd (\log \circ W)$.
Let $\mathcal{T} \subset \mathcal{U}$ denote the complete binary tree
\[\mathcal{T} := \bigcup_{n\geq 0}\{0, 1\}^n\]
Then we can define recursively $(\alpha(u), \omega(u))_{u\in \mathcal{T}}$ by setting $\alpha(\emptyset) = z$, and for $u = vi$, with $i\in \{0,1\}$:
\[\alpha(u) = \omega(v) = \sup [0, \alpha(v)) \cap B_v,\]
with the convention $\sup \emptyset = 0$, and where $(B_v)_{v\in\mathcal{T}}$ are \textit{i.i.d.} Poisson point processes on $[0,z]$ with intensity $\beta$.
This defines the random reversed simple tree $(\mathcal{T}, \alpha, \omega)$ as the genealogy of a pure-birth process with birth intensity $\beta$, with time flowing from $z$ to $0$.
In other words, by the definition of $\beta$, $(\mathcal{T}, \alpha, \omega)$ is the reversed simple tree with distribution CPP($\nu, z$).

Now we define independently of $(\mathcal{T}, \alpha, \omega)$, a family $({M}_u)_{u\in \mathcal{T}}$ of \textit{i.i.d.} Poisson point processes on $\Rplus\times[0,z]$ with Lebesgue intensity.
Writing for $\eta \in \R$ and $u\in\mathcal{T}$,
\[{M}^\eta_u = {M}_u([0,\e^{-\eta}]\times \point),\]
we define a coupling $((M_u^\eta)_{u \in \mathcal{T}})_{\eta\in \R}$ of point processes with intensity $\e^{-\eta} \dd x$ on the branches of $(\mathcal{T}, \alpha, \omega)$.

Now let us define the process $(Y_\eta)_{\eta\in \R}$ by $Y_\eta = (\mathcal{T}_\eta, \alpha_\eta, \omega_\eta)$, with
\begin{gather*}
 \mathcal{T}_\eta := \{u\in \mathcal{T}, \; \forall v \prec u, \, M^\eta_v([\alpha(v), \omega(v)]) = 0 \},\\
 \alpha_\eta(u) := \alpha(u) \quad \forall u \in \mathcal{T}_\eta,\\
 \text{and }\; \omega_\eta (u) := \sup \left (\{\omega(u)\}\cup\{ s < \alpha(u), \; M_u^\eta([s, \alpha(u)]) = 0 \}\right ) \quad\forall u \in \mathcal{T}_\eta,
\end{gather*}

By definition, one can check that $Y_\eta$ is the clonal simple tree associated with the tree $(\mathcal{T}, \alpha, \omega)$ and the point process of mutations $(M_u^\eta)_{u \in \mathcal{T}}$.
Therefore $(Y_\eta)_{\eta\in\R}$ has the same distribution as $(X^z_\eta)_{\eta\in\R}$.
We define the filtration $(\mathcal{F}_\eta)_{\eta\in \R}$ as the natural filtration of the process $(Y_\eta)_{\eta\in\R}$, which we may rewrite:
\[ \mathcal{F}_\eta := \sigma\left ( (\alpha_{\eta'})_{\eta'\leq \eta}, (\omega_{\eta'})_{ \eta'\leq \eta} \right ). \]
From our definitions, for $u\in \mathcal{T}$, we have:
\[\omega_\eta(u) = \inf \{s \in [0, \alpha(u)],\; M_{u}([0,\e^{-\eta}]\times[s,\alpha(u)]) = 0 \text{ and } B_{u}([s,\alpha(u)]) = 0 \},\]
and since $M_{u}$ and $B_u$ are independent Poisson point processes, it is known that conditional on $\mathcal{F}_\eta$, we have: $M_u\cap \Rplus \times [0, \omega_\eta(u))$ and $B_u \cap [0, \omega_\eta(u))$ are independent Poisson point processes, with intensity Lebesgue for $M_u$ and $\beta$ for $B_u$, on their respective domains.

We can further notice that on the event $\{u \text{ is a bud of } Y_\eta\}$, conditional on $\mathcal{F}_\eta$, the families of point processes
\[ (M_{uv}\cap \Rplus \times [0, \omega_\eta(u)))_{v \in \mathcal{T}} \text{ and } (B_{uv}\cap [0, \omega_\eta(u)))_{v \in \mathcal{T}} \]
are independent families of independent Poisson point process with intensity Lebesgue for $M_{uv}$ and $\beta$ for $B_{uv}$, on their respective domains.

Also, since $M_u$ and $B_u$ are independent and with diffuse intensities, we have the a.s.\ equalities between events
\begin{align*}
&\{u \text{ is a bud of } Y_\eta\} \\
&\quad = \{\omega_\eta(u) = \inf \{s \in [0, \alpha(u)],\; M_{u}([0,\e^{-\eta}]\times[s,\alpha(u)]) = 0\} \}\\
&\quad =\{B_u( \{\omega_\eta(u)\}) = 0 \}.
\end{align*}
Moreover, since $M_{u}$ is a Poisson point process with Lebesgue intensity on $\Rplus^2$, it is known that on this event, conditional on $\omega_\eta(u)$, the point process $M_{u}$ restricted to $[0, \e^{-\eta}]\times[0,\omega_\eta(u)]$ has the conditional distribution of:
\[\delta_{(U, \, \omega_\eta(u))} + \widehat{M},\]
where $U$ is a uniform random variable on $[0, \e^{-\eta}]$ and $\widehat{M}$ is an independent Poisson point process on $[0, \e^{-\eta}]\times[0,\omega_\eta(u))$ with Lebesgue intensity.
Hence on the event $A := \{u \text{ is a bud of } Y_\eta\}$, the distribution of 
\begin{align*}
    \widehat{\eta} &= \inf \{\eta' \geq \eta, \; M_u([0, \e^{-\eta'}]\times \{\omega_\eta(u)\}) = 0\}\\
    &= \sup \{\eta' \geq \eta, \; M_u([0, \e^{-\eta'}]\times \{\omega_\eta(u)\}) = 1\}
\end{align*}
is given by
\begin{align*}
\PP(\widehat{\eta}-\eta \geq t \mid A ) & = \PP (M_u([0, \e^{-(\eta+t)}]\times \{\omega_\eta(u)\}) =1 \mid A)\\
& = \PP (U \in [0, \e^{-(\eta+t)}])\\
&= \e^{-t},
\end{align*}
And so if $u$ is a bud of $Y_\eta$, the first time $\widehat \eta$ such that $\omega_{\widehat{\eta}}(u)$ is lower than $\omega_\eta(u)$ satisfies that $\widehat{\eta} - \eta$ has an exponential distribution with parameter $1$.

We may now prove the first point \textit{(i)} of the theorem.
Fix $\eta\in\R$, and write $(b_1, b_2, \ldots)$ for the distinct buds of $Y_\eta$.
We define, for $i \geq 1$ and $\eta' \geq \eta$:
\begin{gather*}
\widetilde{\mathcal{T}}^i_{\eta'} := \{u, \; b_i u \in \mathcal{T}_{\eta'} \},\\
\widetilde {\alpha}^i_{\eta'}(\emptyset) := \omega_\eta(b_i) \text{ and for }u \in \mathcal{T}\setminus \{\emptyset\},\; \widetilde {\alpha}^i_{\eta'}(u) := \alpha_{\eta'}(b_i u), \\
 \widetilde {\omega}^i_{\eta'}(u) := \omega_{\eta'}(b_i u), \\
\widetilde {Y}^i _{\eta'} := \left (\widetilde{\mathcal{T}}^i_{\eta'}, \widetilde {\alpha}^i_{\eta'}, \widetilde {\omega}^i_{\eta'}\right ).
\end{gather*}
This definition formulates that for $\eta' \geq \eta$, $\widetilde {Y}^i_{\eta'}$ is the unique simple tree such that $Y_{\eta'} = A \oplus_{b_i} \widetilde {Y}^i_{\eta'}$ for another simple tree $A$ in which $b_i$ is a bud, with $\omega^A(b_i) = \omega_\eta(b_i)$.
Note that when writing $Y_{\eta'} = A \oplus_{b_i} \widetilde {Y}^i_{\eta'}$, $A$ may be different from $Y_\eta$, even for $\eta'$ arbitrarily close to $\eta$, since other grafts may have occurred (possibly infinitely many grafts if $Y_\eta$ has infinitely many buds).

Since $b_1,b_2, \ldots$ are the buds of $Y_\eta$, the sets $b_1 \mathcal{T},b_2 \mathcal{T},\ldots$ are disjoint.
Thus, from our construction, the following families of random variables are independent conditional on $\mathcal{F}_\eta$:
\[(B_{b_1 u})_{u\in \mathcal{T}},(B_{b_2 u})_{u\in \mathcal{T}} \ldots, (M_{b_1 u})_{u\in \mathcal{T}}, (M_{b_2 u})_{u\in \mathcal{T}}, \ldots\]
Furthermore, we know how to describe their distributions conditional on $\mathcal{F}_\eta$ because of the previous observations.
It follows that the trees $(\widetilde {Y}^i_{\eta'})_{i\geq 1}$ are independent conditional on $\mathcal{F}_\eta$ and the distribution of $(\widetilde {Y}^i_{\eta'})_{\eta' \geq \eta}$ can be described by:

There is a random variable $\widehat{\eta}$ such that 
\begin{itemize}
\item $\widehat{\eta} - \eta$ is exponentially distributed with parameter $1$.
\item For $\eta \leq \eta' < \widehat{\eta}$, we have $\omega_{\eta'}(b_i) = \omega_\eta(b_i)$ so $\widetilde {Y}^i_{\eta'}$ is the empty tree (or rather contains only one point, the root).
\item Conditionally on $\widehat{\eta}$, the process $(\widetilde {Y}^i_{\eta'})_{\eta' \geq \widehat{\eta}}$ is distributed as our construction of the process $(Y_{\eta'})_{\eta'\geq \widehat{\eta}}$, with the initial condition $\alpha(\emptyset) = {\omega_\eta(b_i)}$.
\end{itemize}
This concludes the proof of \textit{(i)}.

For \textit{(ii)}, write $\mathfrak{T}$ for the set of simple binary trees and suppose we have a bounded measurable map $\phi : \mathfrak{T} \to \R$ and a number $n\geq 0$ such that
\[\phi(A) = \phi(A_{|n}) \qquad A \in \mathfrak{T}. \]
Consider a fixed tree $A = (\mathcal{T}, \alpha, \omega) \in \mathfrak{T}$.
There is a finite number of buds $b_1, \ldots, b_m$ in the first $n$ generations $\mathcal{T}_{|n}$, therefore for a fixed $\eta \in \R$, conditional on  $\{X^z_{\eta} = A\}$, the process $(\phi(X^z_{\eta'}))_{\eta' \geq \eta}$ is a continuous time Markov chain.
It follows from \textit{(i)} that this Markov chain jumps after an exponential time with parameter $m$ to a new state where one of the buds, uniformly chosen, grows into a new tree.
That is, denoting $\mathcal{L}_\eta$ the infinitesimal generator of the process $(X^z_\eta)_{\eta \geq \eta_0}$,
\[\mathcal{L}_\eta \phi(A) = \sum_{i=1}^{m} \left ( \mathbb{Q}^{\omega(b_i)}_\eta  [\phi(A \oplus_{b_i} Y)] - \phi(A) \right ),\]
where $Y$ is the random tree drawn under the probability measure $\mathbb{Q}^{\omega(b_i)}_\eta$.

For \textit{(iii)}, note that the existence of a leaf in the clonal subtree at a distance $z$ from the root coincides a.s.\ with the existence of a clonal leaf in $\mathbb{T}^z$, where $\mathbb{T}^z$ is the original CPP($\nu,z$) with mutation measure $\mu(\dd x) = \e^{-\eta}\, \dd x$.
Then the formula in the proof follows from Proposition \ref{prop_proba_clone}, which gives the probability that there is a clonal leaf in a CPP.
\end{proof}

\paragraph{The branching random walk of the buds.}
Forgetting the structure of the tree and considering only the height of the buds, the process becomes a rather simple branching random walk.
Write $\chi^z_\eta := \sum_{b\in \mathcal{B}(X^z_\eta)} \delta_{\omega(b)}$ for the point measure on $\R_+$ giving the heights of the buds in $X^z_\eta$.
Then $(\chi^z_\eta)_{\eta\geq \eta_0}$ is a branching Markov process where each particle stays at their height $z'$ during their lifetime (an exponential time of parameter $1$), then splits at their death time $\eta$ according to the distribution of $\chi^{z'}_\eta$.
Similarly to the preceding paragraph, one can describe the infinitesimal generator of this process as follows.
For a map $f:\R_+ \to \R_+$ that is zero in a neighborhood of $0$ and a Radon point measure $\Gamma$ on $(0,\infty)$ ({i.e.} such that $\Gamma([x,\infty)) < \infty \; \forall x >0$), write $\phi^f(\Gamma)$ for the sum
\[\phi^f(\Gamma) := \int f(z) \Gamma( \dd z). \]
Then the infinitesimal generator $\mathcal{L}_\eta$ at time $\eta$ of the time-inhomogeneous process $(\chi_\eta)_{\eta\in\R}$, evaluated at $\phi^f$, is given by
\[\mathcal{L}_\eta \phi^f(\Gamma) = \int \mathbb{Q}^z_\eta [\phi^f(\chi)] \,\Gamma(\dd z) - \phi^f(\Gamma). \]

\section{Link between CPP and Birth-Death Trees} \label{sec_bd_proc}

\subsection{Birth-Death Processes} 
An additional well-known example of random tree is given by the genealogy of a birth-death process, which will appear as an alternative description of our CPP trees.
Here, a birth-death process is a time-inhomogeneous, time-continuous Markovian branching process living in $\Z_+$ with jumps in $\{-1, 1\}$.
In a general context, we will define the genealogy of a birth-death process as a random simple tree, which we may equip with a canonical limiting measure on the set of its infinite lineages.

Let $J = [t_0, t_\infty)$ be a real interval, with $-\infty < t_0 <t_\infty\leq\infty$.
Suppose there are two measures on $J$, $\beta$ and $\kappa$, respectively called the birth intensity measure and death intensity measure, or simply birth rate and death rate, which satisfy for all $t\in J$
\begin{equation} \label{eq_diffuse_radon_measures}
\begin{gathered}
\beta([t_0, t])<\infty, \qquad \kappa([t_0, t]) < \infty\\
 \beta(\{t\}) = 0, \qquad \kappa(\{t\}) = 0.
\end{gathered}
\end{equation}
In other words, $\beta$ and $\kappa$ are diffuse Radon measures on $J$.

Informally, the population starts with one individual at time $t_0$, and each individual alive at time $t\geq t_0$ may give birth to a new individual at rate $\beta(\dd t)$, and die at rate $\kappa(\dd t)$.

\begin{definition} \label{def_bd_process}
  Let $J = [t_0, t_\infty)$ be a real interval, with $-\infty < t_0 <t_\infty\leq\infty$, and $\beta$ and $\kappa$ measures on $J$ satisfying \eqref{eq_diffuse_radon_measures}.
  Independently for each $u\in \bigcup_{n}\{0,1\}^n$, we define $B_u$ and $D_u$ two independent point processes, such that $B_u$ ({resp.} $D_u$) is a Poisson point process on $J$ with intensity $\beta$ ({resp.} $\kappa$).
  
  The \textbf{genealogy of a $(\beta, \kappa)$ birth-death process} started from $t \in J$ is the random binary simple tree $(\mathcal{T}, \alpha, \omega)$ defined recursively by:
  \begin{enumerate}
    \item $\emptyset \in \mathcal{T}$, with $\alpha(\emptyset) = t$.
    \item For each $u\in \mathcal{T}$, we set $T_B(u) := \inf B_u\cap (\alpha(u),t_\infty)$, and $T_D(u) :=\inf D_u\cap (\alpha(u),t_\infty)$.
    Then there are three different possibilities:
    \begin{itemize}
      \item if $T_B(u) < T_D(u)$, then we set $u0, u1 \in \mathcal{T}$, and $\alpha(u0) = \alpha(u1) = \omega(u) := T_B(u)$,
      \item if $T_D(u) < T_B(u)$, then we set $\omega(u) = T_D(u)$, and $u0, u1 \notin \mathcal{T}$,
      \item if $T_B(u)=T_D(u)=t_\infty$, then we set $\omega(u) = t_\infty$, and $u0, u1 \notin \mathcal{T}$.
    \end{itemize}
  \end{enumerate}
\end{definition}
Birth-death processes have been known for a long time. They have been studied thoroughly as early as 1948 \citep{kendall_generalized_1948}.
%A useful result in the study of birth-death processes is the following: the probability that there is still an individual alive at time $T\geq t$ in a birth-death process started at time $t$ is
%\[\left ({\e^{\kappa([t,T]) - \beta([t,T])}+\int_{[t, T]} \e^{\kappa([t,s]) - \beta([t,s])} \beta(\dd s)}\right )^{-1}. \]
%We show a quick proof of this in the appendices (see Proposition \ref{prop_bd_process}).
In the case of pure-birth processes with infinite descendance, we introduce a canonical measure on the boundary of the tree.

\begin{definition} \label{def_bd_proc_measure_on_boundary}
  Under the assumption $\kappa = 0$ and $\beta(J)=\infty$, the tree $(\mathcal{T}, \alpha, \omega)$ is said to be the genealogy of a \textbf{pure-birth process}. It may then be equipped with a \textbf{measure $\mathscr{L}$ on its boundary $\partial\mathcal{T}= \{0,1\}^{\N}$} defined by
  \[
   \mathscr{L}\left (B_u\right ) := \lim _{s\uparrow t_\infty} \frac{N_u(s)}{\e^{\beta([t_0,s])}} \qquad  u\in \mathcal{T},
  \]
  where $B_u =\{v\in \partial\mathcal{T}, u\prec v\}$ is defined as in Definition \ref{def_simple_tree}, and $N_u(s)$ is the number of descendants of $u$ at time $s$: 
  \[N_u(s) := \#\{v\in \mathcal{T}, \; u\preceq v, \, \alpha(v) < s \leq \omega(v)\}.\]
\end{definition}

\begin{remark}
  The limits in the definition are well-defined because for each $u\in\mathcal{T}$, conditional on $\alpha(u)$, the process $\left (\frac{N_u(s)}{\e^{\beta([t_0,s])}}\right )_{s \geq \alpha(u)}$ is a non-negative martingale.
  Also, the fact that the map $u\mapsto N_u(s)$ is additive combined with Remark \ref{rmq_justif_existence_measure_boundary} justifies that the measure $\mathscr L$ is well defined.
\end{remark}

Finally, let us introduce random mutations on a birth-death tree as a random discrete set of points.
\begin{definition}
\label{def_bd_process+mut}
  Let $\mu$ be a diffuse Radon measure on $J$, and let $\#$ denote the counting measure on
  % any (discrete) measurable space (in this case we consider the space 
  $\bigcup_{n}\{0,1\}^n$.
  A birth-death tree $(\mathcal{T}, \alpha, \omega)$ may be equipped with a set $M$ of \textbf{neutral mutations} at rate $\mu$ by defining, independently of the preceding construction, a Poisson point process $\widetilde{M}$ on $(\bigcup_{n}\{0,1\}^n)\times J$ with intensity $\#\otimes\mu$, and then defining:
  \[M := \{(u,s) \in \widetilde{M},\; \,u\in \mathcal{T}, \alpha(u) < s \leq \omega(u)\}.\]
  This point process $M$ is then a discrete subset of the skeleton of the real tree (defined as in \eqref{def_simple_to_real_trees}) associated with $(\mathcal{T}, \alpha, \omega)$.
\end{definition}  

\textbf{Example.} 
The Yule tree is the genealogy of a pure-birth process with $J=\Rplus$ and a birth rate $\beta$ equal to the Lebesgue measure, which means that the branches separating two branching points are \textit{i.i.d} exponential random variables with parameter $1$.
Every pure-birth tree with $\beta(J)=\infty$ can be time-changed into a Yule tree, with the time-change $\phi:J\to \Rplus, \, t\mapsto \beta([t_0, t])$ (see Proposition \ref{prop_bd_time_change}).

%We will see later how birth-death processes relate to CPP trees (see Theorem \ref{thm_yule}).

\subsection{Link between CPP and Supercritical Birth-Death Trees} \label{sec_yule}
We first provide a refined version of Lemma \ref{lemma_link_bd_cpp} which is proved in Subsection \ref{subsec:proof_lemma_link_bd_cpp}.
\begin{lemma} \label{lemma_link_bd_cpp+meas}
%  Let $\nu$ and $\mu$ be diffuse measures on $\Rplus$, satisfying assumptions \eqref{hyp_mu_nu} and $\nu(\Rplus)=\infty$.
  Under the assumptions of Lemma \ref{lemma_link_bd_cpp}, the CPP$(\nu, z)$ with \textbf{boundary measured by $\ell$} is the genealogy of a reversed pure-birth process with birth intensity $\dd\beta= -\dd\log \overline{\nu}$ started from $z$, \textbf{with boundary measured by $\mathscr{L}$}.
  \end{lemma}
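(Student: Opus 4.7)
The plan is to upgrade the tree-level identification from Lemma \ref{lemma_link_bd_cpp} to an identification of the two boundary measures. Since both $\ell$ and $\mathscr{L}$ are finite measures on $\partial\mathcal{T}$, Remark \ref{rmq_justif_existence_measure_boundary} tells us that it is enough to check $\ell(B_u)=\mathscr{L}(B_u)$ almost surely for every $u$ in the (countable) underlying discrete tree $\mathcal{T}$ of the CPP. Under the bijection of Lemma \ref{lemma_link_bd_cpp}, each such $u$ corresponds to a sub-interval $I_u\subset[0,T(z))$ such that $\ell(B_u)=\lambda(I_u)$, the Lebesgue length of $I_u$.

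First, I would give a concrete expression for $N_u(s)$ as seen on the CPP side. The number of descendants of $u$ at (real-tree) height $s$ is exactly
\[ N_u(s)\;=\;1+\#\bigl(\mathcal{N}\cap(I_u\times[s,\infty])\bigr),\]
because the lineages of $I_u$ are grouped at height $s$ by atoms of the underlying Poisson point process with second coordinate $\geq s$ falling inside $I_u$. Conditionally on $I_u$, this is a Poisson variable of mean $\lambda(I_u)\,\overline{\nu}(s)$, so letting $s\downarrow 0$ the strong law of large numbers for Poisson random measures yields
\[ \frac{N_u(s)}{\overline{\nu}(s)}\;\xrightarrow[s\downarrow 0]{\text{a.s.}}\;\lambda(I_u)\;=\;\ell(B_u).\]
A Borel--Cantelli argument along a countable dense sequence $s_n\downarrow 0$ together with monotonicity of $s\mapsto N_u(s)/\overline{\nu}(s)$ (up to bounded fluctuations controlled by Doob's inequality) gives the convergence simultaneously over the countable family $(u\in\mathcal{T})$.

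Next, I would compute the normalizing factor from the pure-birth side. Taking the reversed pure-birth convention ``started from $z$'', the Laplace--Stieltjes measure $d\beta=-d\log\overline{\nu}$ has the property that the expected number of lineages above height $s$ (starting from one lineage at height $z_0$ with $\overline{\nu}(z_0)=1$) is exactly $\exp(\beta((s,z_0]))=\overline{\nu}(s)$. Hence the martingale normalization in Definition \ref{def_bd_proc_measure_on_boundary} reads $\exp(\beta([t_0,s]))=\overline{\nu}(s)$, and
\[\mathscr{L}(B_u)=\lim_{s\downarrow 0}\frac{N_u(s)}{\overline{\nu}(s)}=\ell(B_u)\quad\text{a.s.}\]
by the previous step. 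Combined with the characterization of boundary measures by their values on the balls $B_u$, this gives $\ell=\mathscr{L}$ under the bijection, which is precisely the lemma.

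The main obstacle is book-keeping of the normalizing factor: the two measures are defined in very different ways (a pushforward of Lebesgue versus a martingale limit of rescaled counts), and their identification hinges on the normalization $\overline{\nu}(z_0)=1$ fixed in Lemma \ref{lemma_link_bd_cpp}, which is what makes $\exp(\beta([t_0,s]))$ equal to $\overline{\nu}(s)$ rather than some $z$-dependent multiple of it. A secondary (but minor) technical point is upgrading the Poisson LLN from convergence for each fixed $u$ to simultaneous convergence over the countable set of $u$'s; this is handled by Doob's $L^2$ inequality for the martingales $s\mapsto N_u(s)/\overline{\nu}(s)$, which are mutually controllable inside the same ambient Poisson point process $\mathcal{N}$.
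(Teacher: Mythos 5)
Your proposal is correct in substance, but it takes a genuinely different route from the paper. The paper does not separate the tree-level and measure-level statements at all: in Subsection \ref{subsec:proof_lemma_link_bd_cpp} it proves Lemmas \ref{lemma_link_bd_cpp} and \ref{lemma_link_bd_cpp+meas} simultaneously, by first establishing an axiomatic characterization of measured pure-birth genealogies (Proposition \ref{prop_bd_process_charac}: complete binary tree, the law of $\omega(\emptyset)$, exponential total boundary mass with mean $\e^{-\beta([t_0,t))}$, and the branching property), and then verifying by explicit Poisson computations --- notably the splitting of $(\mathcal{N}_z, T(z))$ at its highest atom $(U,Z)$ into two conditionally i.i.d.\ copies --- that the CPP carrying the measure $\ell$ satisfies these four axioms; uniqueness then gives the identity in law in one stroke. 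You instead take the unmeasured Lemma \ref{lemma_link_bd_cpp} as an input (legitimate, since the lemma is stated as a refinement of it) and upgrade it pathwise: you view $\mathscr{L}$ as an a.s.-defined measurable functional of the simple tree (the martingale limit of Definition \ref{def_bd_proc_measure_on_boundary}), compute it directly on the CPP via the strong law of large numbers for Poisson processes, and conclude $\mathscr{L}(B_u)=\lambda(I_u)=\ell(B_u)$ simultaneously for the countably many $u$, hence $\ell=\mathscr{L}$ a.s.\ by Remark \ref{rmq_justif_existence_measure_boundary}; the distributional statement then follows by pushing forward through the tree-level identification. Your route buys something the paper's argument never states explicitly: the \emph{almost sure}, not merely distributional, identification of $\ell$ as the limit of rescaled lineage counts on the CPP itself, which is exactly the property advertised informally in the introduction ($\ell(\mathcal{T}_u)\propto\lim N_u(t)/N(t)$). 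The paper's route buys economy: it needs no a.s.\ limit theorem, and it delivers the unmeasured lemma as a by-product rather than requiring it as an input. Your observation that everything hinges on the normalization $\overline{\nu}(z_0)=1$ (so that the reference point $t_0$ corresponds to $z_0$ and $\e^{\beta([t_0,s])}=\overline{\nu}(s)$, independent of the starting height $z$) is precisely the right bookkeeping, and is consistent with the paper's verification that $\mathscr{L}(\partial\mathcal{T})=T(z)$ is exponential with mean $\e^{-\beta((z,z_0])}=W(z)$.

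Three small blemishes, none fatal. First, conditionally on the data delimiting $I_u$, the count $\#\bigl(\mathcal{N}\cap(I_u\times[s,\infty])\bigr)$ is Poisson with mean $\lambda(I_u)\bigl(\overline{\nu}(s)-\overline{\nu}(h_u)\bigr)$ up to a bounded additive term (the atoms of height $\geq s$ inside $I_u$ all lie below the height $h_u$ of the branch point closing off $u$'s subtree, and that branch point itself must be accounted for); this is immaterial as $s\downarrow 0$ but your stated conditional mean is not exact. Second, $s\mapsto N_u(s)/\overline{\nu}(s)$ is \emph{not} monotone; the standard continuum upgrade of the Poisson SLLN uses monotonicity of $N_u(\cdot)$ and of $\overline{\nu}(\cdot)$ separately, sandwiching along a skeleton $s_n\downarrow 0$ with $\overline{\nu}(s_{n+1})/\overline{\nu}(s_n)$ bounded --- no Doob inequality is needed or relevant here. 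Third, simultaneity over $u$ is automatic: there are countably many $u$ and a countable intersection of almost sure events is almost sure, so the Borel--Cantelli apparatus you invoke for this point is unnecessary.
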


Let $J = [t_0, t_\infty)$ be a real interval, with $-\infty < t_0 <t_\infty\leq\infty$, and let $\beta$ and $\kappa$ be diffuse Radon measures on $J$, {i.e.} measures satisfying \eqref{eq_diffuse_radon_measures}.
Consider a birth-death process started from $t_0$ with birth rate $\beta$ and death rate $\kappa$.
%Writing $N_t$ for the number of individuals alive at time $t$, we say
Let us define
\begin{gather*}
%\alpha(\dd t) := \beta(\dd t) - \kappa(\dd t),\\
\mathcal{I}_t := \int_{[t, t_\infty)} \e^{-\beta([t,s])+\kappa([t,s])}\, \beta(\dd s)\\
\beta^*(\dd t) := \frac{\beta(\dd t)}{\mathcal{I}_t}
\end{gather*}
In a birth-death process with $\beta(J) = \infty$, we say that an individual $i$ alive at time $t$ has an \emph{infinite progeny} if $N_i(s)>0$ for any time $s>t$.
It is known (see \citep{kendall_generalized_1948}) that the process is supercritical (i.e., the event $\{\liminf_{t\to t_\infty} N_\emptyset(t) >0 \}$ has positive probability) if and only if $\mathcal{I}_{t_0} < \infty$, and that the probability of non-extinction for a process started at time $t\in J$ is then $\mathcal{I}_t^{-1}$. Also, if the birth-death process with rates $(\beta, \kappa)$ is supercritical, then conditional on non-extinction, the subtree of individuals with infinite progeny is a pure-birth tree with birth rate $\beta^*$.

Now we assume Poissonian neutral mutations are set on the genealogy of a $(\beta, \kappa)$ supercritical birth-death process, according to a rate $\mu$, where $\mu$ is a diffuse Radon measure on $J$.
We also assume $\beta^*(J) =\infty$ so that $\lim_{t\to t_\infty} N_\emptyset(t) =+\infty$ conditional on non-extinction.
Conditional on non-extinction, the subtree of individuals with infinite progeny is a measured simple tree equipped with mutations $(\mathcal{T}, \alpha, \omega, \mathscr{L}, M)$, where:
\begin{itemize}
\item $(\mathcal{T}, \alpha, \omega, \mathscr{L})$ is a random simple binary tree constructed (see Definition \ref{def_bd_proc_measure_on_boundary}) from a pure-birth process with birth rate $\beta^*$.
\item With $\widehat M$ a Poisson point process on $(\bigcup_{n\geq 0}\{0,1\}^n)\times J$ with intensity $\#\otimes \mu$, the mutations on the branches of $\mathcal{T}$ are defined as the set
\[M = \{(i,t)\in \widehat{M},\,i\in \mathcal{T}, \alpha(i) < t \leq \omega(i)\}\]
\end{itemize}
One may study this measured tree with mutations as the limit in time of the genealogy of the birth-death process with neutral mutations.
We show that this measured tree with mutations is in fact a time-changed CPP tree.

\begin{theorem} \label{thm_yule}
  Let $J = [t_0, t_\infty)$ be a real interval, with $-\infty < t_0 <t_\infty\leq\infty$, and let $\beta$ and $\mu$ be diffuse Radon measures on $J$, with $\beta(J) =\infty$.
  Let $\mathbb{T}=(\mathcal{T}, \alpha, \omega, M, \mathscr{L})$ be a random measured simple tree representing the genealogy of a pure-birth process with rate $\beta$ started from $t_0$, equipped with mutations at rate $\mu$.
  Let $\phi: J \to (0,1]$ be the time-change defined by
  \[\phi : t \mapsto \e^{-\beta([t_0, t))}.\]
  Then the time-changed tree $\phi(\mathbb{T})$ (see Proposition \ref{prop_bd_time_change}) has the distribution of a\\ CPP$\left (\frac{\dd x}{x^2},\mu \circ\phi^{-1},1\right)$.% equipped with the measure on the boundary $\Leb$.
\end{theorem}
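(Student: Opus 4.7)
The plan is to combine Lemma \ref{lemma_link_bd_cpp+meas} with the deterministic time-change provided by Proposition \ref{prop_bd_time_change}. The crucial observation is that for $\nu_0(\dd x) = \dd x/x^2$ we have $\overline{\nu_0}(x) = 1/x$, and hence $-\dd\log\overline{\nu_0}(x) = \dd x/x$ on $(0,1]$. By Lemma \ref{lemma_link_bd_cpp+meas}, a CPP$(\nu_0, 1)$ equipped with its boundary measure $\ell$ has the law of a reversed pure-birth tree on $(0,1]$ started from $1$ with birth intensity $\dd x/x$, equipped with the martingale boundary measure $\mathscr{L}'$ of Definition \ref{def_bd_proc_measure_on_boundary}. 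Superimposing a rate $\mu\circ\phi^{-1}$ Poissonian mutation process on this tree then yields precisely a CPP$(\nu_0, \mu\circ\phi^{-1}, 1)$.

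First I would apply $\phi$ to the skeleton $(\mathcal{T}, \alpha, \omega)$. Since $\phi$ is continuous, strictly decreasing with $\phi(t_0)=1$ and $\phi(t) \downarrow 0$ as $t \uparrow t_\infty$, and since $\beta([t_0, \phi^{-1}(s))) = -\log s$, the pushforward rate is
\[\beta \circ \phi^{-1}(\dd s) \;=\; \dd s/s \qquad \text{on } (0,1].\]
By Proposition \ref{prop_bd_time_change}, the time-changed skeleton is therefore distributed as the reversed pure-birth tree on $(0,1]$ with birth intensity $\dd s/s$, which matches the skeleton side of Lemma \ref{lemma_link_bd_cpp+meas} applied to $\nu_0$.

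Next I would verify that the remaining two structures transport correctly. For the mutations, $M$ is obtained by thinning a Poisson point process $\widetilde{M}$ on $(\bigcup_n\{0,1\}^n)\times J$ of intensity $\#\otimes\mu$ to the skeleton; its image under $(u,t)\mapsto(u,\phi(t))$ is a Poisson point process with intensity $\#\otimes(\mu\circ\phi^{-1})$, thinned to the image skeleton, which is exactly the mutation structure of a CPP$(\nu_0, \mu\circ\phi^{-1}, 1)$. For the boundary measure, since $\e^{\beta([t_0, s])} = 1/\phi(s)$, one has
\[\mathscr{L}(B_u) \;=\; \lim_{s\uparrow t_\infty} \frac{N_u(s)}{\e^{\beta([t_0, s])}} \;=\; \lim_{s'\downarrow 0} s'\, N_u\big(\phi^{-1}(s')\big),\]
and the right-hand side is exactly the martingale $\mathscr{L}'(B_u)$ computed on the image tree, which in turn coincides with the Lebesgue-induced boundary measure $\ell$ on the CPP side by Lemma \ref{lemma_link_bd_cpp+meas}.

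The main obstacle is not a single computation but the bookkeeping of compatibility: one must check that Proposition \ref{prop_bd_time_change} carries the quintuple $(\mathcal{T}, \alpha, \omega, M, \mathscr{L})$ consistently under the non-affine change of time $\phi$, and that the mapping properties of the underlying Poisson point processes behave as expected. Once these compatibilities are verified --- which amounts to standard properties of pushforwards of Poisson point processes and of the intrinsic martingale description of $\mathscr{L}$ --- the identification $\phi(\mathbb{T}) \eqdist \text{CPP}(\nu_0, \mu\circ\phi^{-1}, 1)$ follows immediately.
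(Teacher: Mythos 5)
Your proposal is correct and follows essentially the same route as the paper: push $\mathbb{T}$ through $\phi$ via Proposition \ref{prop_bd_time_change}, compute the pushforward intensity $\beta\circ\phi^{-1}(\dd s)=\dd s/s=\dd\log s$ on $(0,1]$, and match this against Lemma \ref{lemma_link_bd_cpp+meas} applied to $\nu_0(\dd x)=\dd x/x^2$, for which $-\dd\log\overline{\nu_0}=\dd\log x$. The only cosmetic difference is that you re-derive the transport of the mutations and of the boundary measure by hand, whereas the paper delegates exactly those verifications to Proposition \ref{prop_bd_time_change}.
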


\begin{proof}
Thanks to Lemma \ref{lemma_link_bd_cpp+meas}, we only need to exhibit a correct time change to prove the Theorem.
  We know that a time-changed birth-death tree is still a birth-death tree: this is explicitly stated in Proposition \ref{prop_bd_time_change} in the appendix.
  This implies here that the time-changed tree $\phi(\mathbb{T})$ is a (reversed) pure-birth process with birth rate $\beta\circ\phi^{-1}$, started from $\phi(t_0)=1$, and equipped with mutations with rate $\mu\circ\phi^{-1}$.
  Let us first check that $\beta\circ\phi^{-1}(\dd x) = \dd \log (x)$.
  Since $\beta$ is diffuse, $\phi$ is continuous decreasing, so for all $x\in (0,z_0]$, we have $\phi(\phi^{-1}(x))=x$, where $\phi^{-1}$ is the right-continuous inverse of $\phi$.
  Therefore we have, for all $a<b \in (0,1]$:
  \begin{align*}
  \beta\circ\phi^{-1}([a,b]) &= \beta([\phi^{-1}(b), \phi^{-1}(a)]) \\
  &= \log \phi(\phi^{-1}(b)) - \log \phi(\phi^{-1}(a))\\
  &= \log(b) - \log(a).
  \end{align*}
  Now notice that for $x\in (0,1]$,
  \[-\log\left (\int_x^\infty \frac{1}{y^2}\dd y\right) = \log x, \]
  so according to Lemma \ref{lemma_link_bd_cpp+meas}, a CPP$\left (\frac{\dd x}{x^2},\mu \circ\phi^{-1},1\right)$ is a pure-birth process with birth rate $\beta(\dd x)= \dd \log (x)$, started from $1$ and equipped with mutations at rate $\mu \circ\phi^{-1}$.
  Therefore its distribution is identical to the distribution of $\phi(\mathbb{T})$.
\end{proof}

\paragraph{Acknowledgements.} The authors thank the \emph{Center for Interdisciplinary Research in Biology} (Coll\`ege de France) for funding.

\appendix
\addtocontents{toc}{\protect\setcounter{tocdepth}{2}}

\section{Appendix}

\subsection{Birth-Death Processes}

\begin{prop} \label{prop_bd_process}
Let $J = [t_0, t_\infty)$ be a real interval, with $-\infty < t_0 <t_\infty\leq\infty$, and $\beta$ and $\kappa$ diffuse Radon measures on $J$ ({i.e.} satisfying \eqref{eq_diffuse_radon_measures}).
Let $\PP_t$ denote the distribution of the genealogy of a $(\beta, \kappa)$ birth-death process started with one individual at time $t\in J$, and let $N_T$ be the number of individuals alive at time $T \in J$.
For $T>t$ and $\alpha\geq 0$, we have:
\[\E_t(\e^{-\alpha N_T}) = 1-\frac{(1-\e^{-\alpha})}{\e^{\kappa([t,T]) - \beta([t,T])}+(1-\e^{-\alpha})\int_{[t, T]} \e^{\kappa([t,s]) - \beta([t,s])} \beta(\dd s)},\]
and in particular,
\[\PP_t(N_T > 0) = \left (\e^{\kappa([t,T]) - \beta([t,T])}+\int_{[t, T]} \e^{\kappa([t,s]) - \beta([t,s])} \beta(\dd s)\right )^{-1}. \]
\end{prop}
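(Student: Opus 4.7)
The target is the standard Kolmogorov-type formula for the Laplace transform of the population size of an inhomogeneous binary birth-death process. My plan is to derive a backward equation for $g_t := g_t(\alpha) := \E_t[\e^{-\alpha N_T}]$ viewed as a function of the starting time $t\in[t_0,T]$ (with $T$ and $\alpha$ fixed), linearize it by the substitution $v_t = 1/(1-g_t)$, and solve.

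First I would set up the backward equation using the Markov and branching properties. Conditioning on what happens to the initial individual in a small window $[t,t+h]$, the first event is either a birth (contribution from two i.i.d.\ copies restarted after birth), a death (contribution $1$), or nothing happened. In the smooth case where $\beta(\dd s) = b(s)\dd s$ and $\kappa(\dd s) = k(s)\dd s$, this gives the ODE
\[
-\frac{\dd g_t}{\dd t} \;=\; b(t)\bigl(g_t^2 - g_t\bigr) + k(t)\bigl(1 - g_t\bigr), \qquad g_T = \e^{-\alpha}.
\]
For general diffuse Radon $\beta,\kappa$, the corresponding Stieltjes backward equation reads
\[
-\dd g_t \;=\; \bigl(g_t^2 - g_t\bigr)\beta(\dd t) + \bigl(1 - g_t\bigr)\kappa(\dd t),
\]
and I would justify this either by a direct martingale argument (considering the increments of $g_t(\alpha)$ along the jumps of the underlying Poisson processes $B_u, D_u$ in Definition \ref{def_bd_process}) or by approximating $\beta,\kappa$ by absolutely continuous measures with the same mass on each $[t_0,t]$ and passing to the limit in the formula at the end.

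Next, let $u_t := 1 - g_t$ and $v_t := 1/u_t$. A short Stieltjes calculation (using that $\beta$ and $\kappa$ are diffuse, so no jump corrections arise) converts the quadratic backward equation into the linear Stieltjes ODE
\[
\dd v_t \;=\; (\beta - \kappa)(\dd t)\, v_t \;-\; \beta(\dd t),
\]
with terminal condition $v_T = 1/(1-\e^{-\alpha})$. The integrating factor is $I(t):=\e^{-\beta([t_0,t]) + \kappa([t_0,t])}$, and integrating $\dd(I v) = -I\,\beta(\dd t)$ from $t$ to $T$ gives
\[
v_t \;=\; \frac{I(T)}{I(t)}\,v_T \;+\; \int_{[t,T]} \frac{I(s)}{I(t)}\,\beta(\dd s) \;=\; \frac{\e^{\kappa([t,T])-\beta([t,T])}}{1-\e^{-\alpha}} + \int_{[t,T]} \e^{\kappa([t,s]) - \beta([t,s])}\,\beta(\dd s).
\]
Inverting $g_t = 1 - 1/v_t$ yields exactly the announced expression for $\E_t[\e^{-\alpha N_T}]$, and letting $\alpha\to\infty$ (so $\e^{-\alpha}\to 0$) gives $\PP_t(N_T>0) = u_t\big|_{\alpha=\infty}$, which is the second displayed formula.

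The main obstacle is rigorously justifying the Stieltjes backward equation when $\beta,\kappa$ are only diffuse Radon measures (no densities). The cleanest route is probably the martingale/functional-equation approach: define $G(t,\omega) := g_t(\alpha)$ deterministically and check, via the branching decomposition at the first event of $B_\emptyset \cup D_\emptyset$ after $t$, that $g_t$ satisfies the integral equation
\[
g_t \;=\; \e^{-\alpha-(\beta+\kappa)([t,T])} + \int_{[t,T]} \e^{-(\beta+\kappa)([t,s])}\bigl(g_s^2\,\beta(\dd s) + \kappa(\dd s)\bigr),
\]
and then either verify that the closed-form candidate satisfies this equation and invoke a Gronwall-type uniqueness argument, or reduce to the smooth case by approximating $\beta,\kappa$ in the vague topology and passing to the limit in the formula (which is continuous in the measures on compact time intervals). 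Everything else is routine.
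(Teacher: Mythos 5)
Your proposal is correct and, from the backward equation onward, coincides step by step with the paper's own proof: the same Riccati-type Stieltjes equation $-\dd q(t) = (q(t)^2-q(t))\,\beta(\dd t) + (1-q(t))\,\kappa(\dd t)$ with terminal value $\e^{-\alpha}$, the same linearization $v = 1/(1-q)$, the same integrating factor $\e^{\kappa([t,T])-\beta([t,T])}$, and the same limit $\alpha\to\infty$ for the survival probability. The one genuine difference is how the backward equation is obtained. You use a first-event decomposition in the binary-splitting model of Definition \ref{def_bd_process}: conditioning on whether the first atom of $B_\emptyset\cup D_\emptyset$ after $t$ is a birth or a death yields your renewal equation with the $g_s^2$ term under the integral. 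The paper instead switches to an equivalent description in which each individual dies at rate $\kappa$ and produces new individuals at rate $\beta$ throughout its lifetime; conditioning on the ancestor's death time $D$ and applying the exponential formula for the Poisson process of birth times on $[t,D]$ gives
\[
q(t) = \E_t\Big[\e^{-\alpha \1_{D>T}}\exp\Big(-\textstyle\int_{[t,D]}(1-q(s))\,\beta(\dd s)\Big)\Big],
\]
which is then differentiated. Your route is the more standard first-step analysis and stays entirely within the model as defined, whereas the paper's route requires the (easy but unproved) remark that the two population models give the same law for $N_T$, in exchange for invoking the branching property only through the ancestor's children rather than at a generic split. Both derivations face the same technical point --- justifying the Stieltjes differentiation when $\beta,\kappa$ are merely diffuse Radon measures --- which the paper passes over quickly (``which implies by differentiation'') and which you address explicitly, and soundly, via verification of the closed-form candidate plus Gronwall uniqueness, or via smooth approximation.
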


  \begin{remark}
  Note that the previous proposition shows that conditional on being non-zero, $N_T$ is a geometric random variable, which is a known fact about birth-death processes (see for instance \citep{kendall_generalized_1948}).
  We still provide a proof in our case where the birth and death intensity measures are not necessarily absolutely continuous with respect to Lebesgue.
  \end{remark}
  
\begin{proof}
With a fixed time horizon $T\in J$ and a fixed real number $\alpha \geq 0$, write for $t < T$,
\[q(t) = \E_t(\e^{-\alpha N_T}).\]
We use a different description of the birth-death process than the one used in Section \ref{sec_bd_proc}, and consider a population where individuals die at rate $\kappa$, and during their lifetime, produce a new individual at rate $\beta$.
Notice that for any $s>t$, the number of individuals alive at time $s$ has the same distribution in both models.

Thus we write $D$ for the death time of the first individual, and $B_i$ for the possible birth time of her $i$-th child.
With our description, $D$ has the distribution of the first atom of a Poisson point process on $[t, t_\infty)$ with intensity $\kappa$ and conditional on $D$, the set $\{B_1, B_2, \ldots, B_N\}$ is a Poisson point process on $[t, D]$ with intensity $\beta$.
Also, write $\widetilde{N}^i_T$ for the number of alive descendants of the $i$-th child at time $T$.
Since we have $N_T = \1_{D>T} + \sum_{i} \widetilde{N}^i_T$, we have
\[q(t) = \E_t\left [ \e^{-\alpha \1_{D>T}} \textstyle \prod_{i}\e^{-\alpha \widetilde{N}^i_T}  \right ],\]
where we define by convention $\widetilde{N}^i_T = 0$ if $B_i > T$.
Now conditional on $D$ and $(B_i)$, $(\widetilde{N}^i_T)$ are independent, with $\widetilde{N}^i_T$ equal to the distribution of $N_T$ under $\PP_{B_i}$.
Hence
\[q(t) = \E_t\left [ \e^{-\alpha \1_{D>T}} \textstyle \prod_{i}q(B_i)  \right ],\]
where we use the convention $q(u) := 1$ if $u > T$.
Now conditional on $D$, $(B_i)$ are the atoms of a Poisson point process with intensity $\beta(\dd s)$ on $[t, D]$, so we have
\begin{align*}
q(t) &= \E_t\left [ \e^{-\alpha \1_{D>T}} \exp\left ({-\int_{[t,D]} (1-q(s))\beta(\dd s)}\right ) \right ]\\
 &= \int_{[t, \infty)} \kappa(\dd u) \, \e^{-\kappa([t, u))} \e^{-\alpha \1_{u>T}} \exp\left ({-\int_{[t,u]} (1-q(s))\beta(\dd s)}\right ),
\end{align*}
which implies by differentiation
\[\dd q (t) = - \kappa(\dd t)+q(t)\left [\kappa(\dd t) + (1-q(t)) \beta(\dd t)\right ],\]
which in turn may be rewritten
\[\dd \left (\frac{1}{1-q(t)}\right ) = -\beta(\dd t) + \left (\frac{1}{1-q(t)}\right ) (\beta(\dd t)-\kappa(\dd t)). \]
Remark that with $F(t) := \e^{\beta([t,T]) - \kappa([t,T])}$, we have $\dd F(t) = F(t)(\kappa(\dd t) - \beta(\dd t))$, so that
\[\dd \left (\frac{F(t)}{1-q(t)}\right ) = -F(t) \beta(\dd t),\]
and since $q(T) = \e^{-\alpha}$, we have by integration on $[t, T]$:
\[\frac{1}{1-\e^{-\alpha}} - \frac{F(t)}{1-q(t)} = -\int_{[t, T]} F(s) \beta(\dd s),\]
that is
\[ 1- q(t) = \frac{(1-\e^{-\alpha})}{\e^{\kappa([t,T]) - \beta([t,T])}+(1-\e^{-\alpha})\int_{[t, T]} \e^{\kappa([t,s]) - \beta([t,s])} \beta(\dd s)}. \]
This characterizes the distribution of $N_T$ under $\PP_t$ for all $T$.
In particular, letting $\alpha\to\infty$, we get
\[\PP_t(N_T > 0) = \left (\e^{\kappa([t,T]) - \beta([t,T])}+\int_{[t, T]} \e^{\kappa([t,s]) - \beta([t,s])} \beta(\dd s)\right )^{-1},\]
which concludes the proof.
\end{proof}

\begin{prop}[Time-changed birth-death processes] \label{prop_bd_time_change}
Let $J = [t_0, t_\infty)$ be a real interval, with $-\infty < t_0 <t_\infty\leq\infty$, and $\beta$, $\kappa$, and $\mu$ diffuse Radon measures on $J$ ({i.e.} satisfying \eqref{eq_diffuse_radon_measures}).
Let $\phi:J\to \R$ be an increasing function, and define $t'_0 := \phi(t_0)$, $t'_\infty := \lim_{t\uparrow t_\infty}\phi(t)$ and $J'=[t'_0, t'_\infty)$.
We assume that $\phi$ satisfies 
\[\forall t<t_\infty,\;\phi(t)<t'_\infty.\]

Let $\mathbb{T}=(\mathcal{T}, \alpha, \omega, M)$ be the genealogy of a $(\beta, \kappa)$ birth-death process, started at $t\in J$ and equipped with Poissonian mutations with rate $\mu$, as in Definition \ref{def_bd_process+mut}.
We define the time-changed simple tree:
\[\phi(\mathbb{T}) := (\mathcal{T}, \phi \circ \alpha, \phi \circ \omega, \{(u,\phi(s)), \; (u,s)\in M\}).\]
If $\beta\circ\phi^{-1}$ and $\kappa\circ\phi^{-1}$ (the push-forwards of $\beta$ and $\kappa$ by $\phi$) still have no atoms, then $\phi(\mathbb{T})$ has the distribution of the genealogy of a $(\beta\circ\phi^{-1},\kappa\circ\phi^{-1})$ birth-death process, started at $\phi(t)\in J'$ and equipped with Poissonian mutations with rate $\mu\circ\phi^{-1}$.

Also, if $\kappa = 0$ and $\beta(J)=\infty$, then $\kappa\circ\phi^{-1}=0$ and $\beta\circ\phi^{-1}(J')=\infty$, and the measures $\mathscr{L}_{\mathbb{T}}$ and $\mathscr{L}_{\phi(\mathbb{T})}$ on $\partial\mathcal{T}$, defined for $\mathbb{T}$ and for $\phi(\mathbb{T})$, are the same.
\end{prop}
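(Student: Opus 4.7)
The plan is to unpack the construction of the birth-death tree in Definition~\ref{def_bd_process} and Definition~\ref{def_bd_process+mut}, and to observe that the whole construction commutes with the pushforward by $\phi$ since $\phi$ is increasing. The key probabilistic tool is the mapping theorem: if $X$ is a Poisson point process on $J$ with intensity $\nu$, and $\psi$ is a measurable injection, then the image $\psi(X)$ is a Poisson point process on $\psi(J)$ with intensity $\nu\circ\psi^{-1}$.

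First I would apply this to each family $(B_u)_u$ and $(D_u)_u$ of Poisson point processes driving the construction of $\mathbb{T}$: the images $B'_u:=\phi(B_u)$ and $D'_u:=\phi(D_u)$ are independent Poisson point processes on $J'$ with intensities $\beta\circ\phi^{-1}$ and $\kappa\circ\phi^{-1}$ respectively (the assumption that these pushforwards are diffuse is exactly what is required for them to be valid Radon intensity measures in the sense of \eqref{eq_diffuse_radon_measures}). Next, I would verify by induction on $|u|$ that the recursive step in Definition~\ref{def_bd_process} commutes with $\phi$. Indeed, since $\phi$ is strictly increasing, one has
\[\inf \phi(B_u)\cap(\phi(\alpha(u)),t'_\infty)=\phi\bigl(\inf B_u\cap(\alpha(u),t_\infty)\bigr),\]
and the same for $D_u$; the case $T_B(u)=T_D(u)=t_\infty$ corresponds to $\phi(T_B(u))=\phi(T_D(u))=t'_\infty$ (using the hypothesis $\phi(t)<t'_\infty$ for $t<t_\infty$ and taking limits). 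Consequently the comparison $T_B(u)\lessgtr T_D(u)$ and the resulting tree structure are preserved, and the birth/death times of $\phi(\mathbb T)$ are precisely $\phi\circ\alpha$ and $\phi\circ\omega$. This shows that $\phi(\mathbb T)$ is the genealogy of a $(\beta\circ\phi^{-1},\kappa\circ\phi^{-1})$ birth-death process started at $\phi(t)$.

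For the mutations, I would apply the mapping theorem to the Poisson point process $\widetilde M$ on $(\bigcup_n\{0,1\}^n)\times J$ with intensity $\#\otimes\mu$ via the map $(u,s)\mapsto(u,\phi(s))$: the image is a Poisson point process with intensity $\#\otimes(\mu\circ\phi^{-1})$. Restricting to the skeleton of $\phi(\mathbb T)$, i.e.\ to pairs $(u,\phi(s))$ with $\alpha(u)<s\le\omega(u)$, which is the same as $\phi(\alpha(u))<\phi(s)\le\phi(\omega(u))$ by monotonicity, yields exactly the set of mutations of $\phi(\mathbb T)$ as described in Definition~\ref{def_bd_process+mut}.

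Finally, for the boundary measure in the pure-birth case, assume $\kappa=0$ and $\beta(J)=\infty$. Then $\kappa\circ\phi^{-1}=0$ trivially, and $\beta\circ\phi^{-1}(J')=\beta(\phi^{-1}(J'))=\beta(J)=\infty$. For every $u\in\mathcal T$ and every $s\in J$, the set of descendants of $u$ alive at time $s$ in $\mathbb T$ coincides (as a subset of $\mathcal T$) with the set of descendants of $u$ alive at time $\phi(s)$ in $\phi(\mathbb T)$, since monotonicity of $\phi$ preserves the inequalities $\alpha(v)<s\le\omega(v)$. Therefore $N^{\phi(\mathbb T)}_u(\phi(s))=N^{\mathbb T}_u(s)$. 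Moreover, by definition of the pushforward, $(\beta\circ\phi^{-1})([\phi(t_0),\phi(s)])=\beta([t_0,s])$. Consequently
\[\mathscr L_{\phi(\mathbb T)}(B_u)=\lim_{s\uparrow t_\infty}\frac{N^{\phi(\mathbb T)}_u(\phi(s))}{\e^{(\beta\circ\phi^{-1})([\phi(t_0),\phi(s)])}}=\lim_{s\uparrow t_\infty}\frac{N^{\mathbb T}_u(s)}{\e^{\beta([t_0,s])}}=\mathscr L_{\mathbb T}(B_u),\]
and the two boundary measures agree on the generating family, hence coincide by Remark~\ref{rmq_justif_existence_measure_boundary}.

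The main obstacle is really bookkeeping: verifying that the pushforward of a Poisson point process through a (possibly discontinuous but strictly increasing) $\phi$ with the stated assumptions yields a Poisson point process with a valid diffuse intensity on $J'$, and checking carefully that the infimum/supremum operations in the recursive construction and in the boundary-measure limit commute with $\phi$ at the endpoint $t_\infty$. Once the mapping theorem and monotonicity are invoked correctly, the rest is a routine verification.
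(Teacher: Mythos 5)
Your overall strategy --- push the driving Poisson processes forward through $\phi$, check that the recursive construction of Definition \ref{def_bd_process} commutes with $\phi$, and identify the boundary measures via $N'_u(\phi(s))=N_u(s)$ --- is exactly the paper's. But there is a genuine gap: you assume throughout that $\phi$ is \emph{strictly} increasing (your mapping theorem is stated for injections, and both the inf-commutation and the preservation of the comparisons $T_B(u)\lessgtr T_D(u)$ are justified by ``since $\phi$ is strictly increasing''). The proposition only assumes $\phi$ increasing in the weak sense, and this extra generality is not idle: in the application (Theorem \ref{thm_yule}) the time change $\phi(t)=\e^{-\beta([t_0,t))}$ is constant on every interval not charged by $\beta$, hence typically not injective. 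Note also that under your reading the hypothesis that $\beta\circ\phi^{-1}$ and $\kappa\circ\phi^{-1}$ have no atoms is automatically satisfied (a strictly increasing $\phi$ has singleton preimages, and $\beta$, $\kappa$ are diffuse), so it would be a vacuous assumption --- a sign that it is precisely the tool needed to handle non-injective $\phi$.

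For weakly increasing $\phi$ the steps you justify by strictness can fail pointwise and must be replaced by almost-sure arguments, which is what the paper does: (a) since $\beta\circ\phi^{-1}$ is atomless, for each fixed $t'$ one has a.s.\ $t'\notin\phi(B_u)$, and by independence of $\alpha(u)$ from $B_u,D_u$, a.s.\ $\phi(\alpha(u))\notin\phi(B_u)\cup\phi(D_u)$; this upgrades the weak inequality $\phi(s)\geq\phi(\alpha(u))$ to a strict one for all $s\in B_u\cap(\alpha(u),t_\infty)$, giving $\phi(T_B(u))=\inf\phi(B_u)\cap(\phi(\alpha(u)),t'_\infty)$; (b) since $\phi(B_u)$ and $\phi(D_u)$ are independent with diffuse intensities, a.s.\ $\phi(B_u)\cap\phi(D_u)=\emptyset$, so that $T_B(u)<T_D(u)$ if and only if $\phi(T_B(u))<\phi(T_D(u))$ almost surely. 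The same care is needed in your boundary-measure step: with $\phi$ only weakly increasing, $\alpha(v)<s$ gives merely $\phi(\alpha(v))\leq\phi(s)$, so the identity $N'_u(\phi(s))=N_u(s)$ again holds only almost surely, using that birth and death times a.s.\ avoid the (countably many) intervals on which $\phi$ is constant. With these almost-sure substitutions your argument goes through; without them it proves the proposition only under a strictly stronger hypothesis than the one stated, which would not cover the use made of it in Theorem \ref{thm_yule}.
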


\begin{proof}
Suppose $\mathbb{T}$ is constructed as in Definition \ref{def_bd_process} with independent Poisson point processes $B_u$ and $D_u$ with respective intensities $\beta$ and $\kappa$, for each $u\in \bigcup_n\{0,1\}^n$.
This implies that the random sets defined by
\begin{gather*}
\phi(B_u) := \{\phi(s), \; s\in B_u\},\\
\phi(D_u) := \{\phi(s), \; s\in D_u\},
\end{gather*}
are independent Poisson point processes on the interval $J'$ with respective intensities $\beta \circ \phi^{-1}$ and $\kappa\circ\phi^{-1}$.
Remark that by assumption, for $\eta \in \{\beta,\kappa\}$, for all $t'\in J'$, we have $\eta\circ\phi^{-1} (\{t'\}) = 0$, so we a.s.\ have $t'\notin \phi(B_u)$ and $t'\notin\phi(D_u)$.
Now since $\alpha(u)$ is independent of $B_u$ and $D_u$, we have also a.s.
\begin{equation} \label{eq_alpha_notin_B_or_D}
\phi\circ\alpha(u) \notin \phi(B_u)\cup\phi(D_u).
\end{equation}

By definition, we have $\emptyset \in \mathcal{T}$ and $\alpha(\emptyset) = t$, so $\phi\circ\alpha(\emptyset) = \phi(t)$.
Then, if $u \in \mathcal{T}$, with $T_B(u)= \inf B_u\cap (\alpha(u),t_\infty)$, and $T_D(u)=\inf D_u\cap (\alpha(u),t_\infty)$, the following assertions hold.
\begin{itemize}
\item Since we have \eqref{eq_alpha_notin_B_or_D}, we know that a.s.\ for all $s\in B_u\cap(\alpha(u), t_\infty)$, we have $\phi(\alpha(u))< \phi(s)$.
This ensures that $\phi(T_B(u)) = \inf \phi(B_u)\cap (\phi\circ\alpha(u),t'_\infty)$.
\item For the same reason, we have $\phi(T_D(u)) = \inf \phi(D_u)\cap (\phi\circ\alpha(u),t'_\infty)$.
\item Because $\phi(B_u)$ is independent of $\phi(D_u)$ and because $\beta\circ\phi^{-1}$ and $\kappa\circ\phi^{-1}$ are diffuse by assumption, we have $\phi(B_u)\cap \phi(D_u) = \emptyset$ almost surely.
Therefore, we have:
\begin{itemize}
\item $\phi(T_B(u)) < \phi(T_D(u)) \iff T_B(u) < T_D(u)$, which implies $u0, u1 \in \mathcal{T}$, and $\phi\circ\alpha(u0) = \phi\circ\alpha(u1) = \phi\circ\omega(u) = \phi(T_B(u))$,
\item $\phi(T_D(u)) < \phi(T_B(u)) \iff T_D(u) < T_B(u) $, which implies $\phi\circ\omega(u) = \phi(T_D(u))$, and $u0, u1 \notin \mathcal{T}$,
\item $\phi(T_B(u)) = \phi(T_D(u)) = t'_\infty \iff T_B(u) = T_D(u) = t_\infty $, which implies $\phi\circ\omega(u) = t'_\infty$, and $u0, u1 \notin \mathcal{T}$.
\end{itemize}
\end{itemize}
Thus $(\mathcal{T}, \phi\circ\alpha, \phi\circ\omega)$ is defined as a $(\beta\circ\phi^{-1},\kappa\circ\phi^{-1})$ birth-death process, started at $\phi(t)$.

For the neutral mutations, we assume there is, as in Definition \ref{def_bd_process+mut}, a Poisson point process $\widetilde{M}$ on $(\bigcup_{n}\{0,1\}^n)\times J$ with intensity $\#\otimes\mu$, and such that:
\[M = \{(u,s) \in \widetilde{M},\; \,u\in \mathcal{T}, \alpha(u) < s \leq \omega(u)\}.\]
Now $\{(u,\phi(s)), \; (u,s)\in \widetilde{M}\}$ is a Poisson point process on $(\bigcup_{n}\{0,1\}^n)\times J'$ with intensity $\#\otimes\mu\circ\phi^{-1}$, so
\[ \{(u,\phi(s)), \; (u,s)\in M\} = \{(u,\phi(s)), \; (u,s) \in \widetilde{M},\; \,u\in \mathcal{T}, \alpha(u) < s \leq \omega(u)\} \]
is the definition of random neutral mutations at rate $\mu\circ\phi^{-1}$ on the tree $(\mathcal{T}, \phi\circ\alpha, \phi\circ\omega)$.

It remains to prove that in the case $\kappa = 0$ and $\beta(J) = \infty$, the measures $\mathscr{L}_{\mathbb{T}}$ and $\mathscr{L}_{\phi(\mathbb{T})}$ are the same.
By definition, we have for $u \in \bigcup_{n}\{0,1\}^n$,
\begin{align*}
\mathscr{L}_{\phi(\mathbb{T})}(B_u) &= \lim_{s'\uparrow t'_\infty} \frac{N'_u(s')}{\e^{\beta\circ\phi^{-1}([t'_0,s'])}} \\
&=  \lim_{s\uparrow t_\infty} \frac{N'_u(\phi(s))}{\e^{\beta\circ\phi^{-1}([t'_0,\phi(s)])}} ,
\end{align*}
where $N'_u(s') := \#\{v\in \mathcal{T}, \; u\preceq v, \, \phi\circ\alpha(v) < s \leq \phi\circ\omega(v)\}$ is the number of descendants of $u$ in the time-changed tree at time $s'$.
But we have a.s.\ for all $s\in J$, $N'_u(\phi(s))=N_u(s)$, and also $\beta\circ\phi^{-1}([t'_0,\phi(s)]) = \beta([t_0,s])$, so finally
\begin{align*}
\mathscr{L}_{\phi(\mathbb{T})}(B_u) &= \lim_{s\uparrow t_\infty} \frac{N'_u(\phi(s))}{\e^{\beta\circ\phi^{-1}([t'_0,\phi(s)])}} ,\\
&= \lim_{s\uparrow t_\infty} \frac{N_u(s)}{\e^{\beta([t_0,s])}}\\
&= \mathscr{L}_{\mathbb{T}}(B_u) ,%\qedhere
\end{align*}
which ends the proof.
\end{proof}

\begin{prop}[Characterization of pure-birth processes] \label{prop_bd_process_charac}
Let $J = [t_0, t_\infty)$ be a real interval, with $-\infty < t_0 <t_\infty\leq\infty$, and $\beta$ a diffuse Radon measure on $J$, such that $\beta(J) = \infty$.

There is a unique family $(\PP_t)_{t\in J}$ of distributions on simple trees $(\mathcal{T}, \alpha, \omega, \mathscr{L})$ equipped with a measure $\mathscr{L}$ on $\partial \mathcal{T} := \{0,1\}^{\N}$, such that for all $t\in J$
\begin{enumerate}[(i)]
\item  $\mathcal{T} = \bigcup_n\{0,1\}^n$ and $\alpha(\emptyset) = t\quad\PP_t$-almost surely.
\item  $\PP_t(\omega(\emptyset) > s) = \e^{-\beta([t,s))}$.
\item Under $\PP_t$, $\mathscr{L}(\partial \mathcal{T})$ is an exponential r.v.\ with mean $\e^{-\beta([t_0,t))}$.
\item Under $\PP_t$, define for $i\in\{0,1\}$,
%\[\mathcal{T} = \{\emptyset\}\cup 0\mathcal{T} \cup 1\mathcal{T},\]
$\alpha_i(u) := \alpha(iu)$, $\omega_i(u) := \omega(iu)$, $\mathscr{L}_i$ the measure on $\partial\mathcal{T}$ such that $\mathscr{L}_i(B_u) = \mathscr{L}(B_{iu})$ for all $u\in\mathcal{T}$ and finally $\mathbb{T}_i := (\mathcal{T}, \alpha_i, \omega_i, \mathscr{L}_i)$.
Then the conditional distribution of the pair of trees $(\mathbb{T}_0,\mathbb{T}_1)$ given $\omega(\emptyset)$ is $\PP_{\omega(\emptyset)}^{\otimes 2}$, {i.e.} they are independent with the same distribution $\PP_{\omega(\emptyset)}$.
\end{enumerate}

Furthermore, for all $t\in J$, $\PP_t$ is the distribution of the genealogy of a pure-birth process with birth rate $\beta$ started with one individual at time $t\in J$, equipped with $\mathscr{L}$ the measure on $\partial \mathcal{T}$ introduced in Definition \ref{def_bd_proc_measure_on_boundary}.
\end{prop}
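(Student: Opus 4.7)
The plan is to prove existence and uniqueness separately. For existence, I will take $\PP_t$ to be the law of the pure-birth tree of Definition \ref{def_bd_process} (with $\kappa = 0$) started at $t$, equipped with the canonical boundary measure of Definition \ref{def_bd_proc_measure_on_boundary}, and verify properties (i)--(iv) one by one. Uniqueness will then follow from an induction on generations showing that any family satisfying (i)--(iv) has the same joint law on the countable family $(\omega(u), \mathscr{L}(B_u))_{u \in \mathcal{T}}$.

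For the existence verification, properties (i), (ii), and (iv) are essentially built into the Poissonian construction: (i) uses $\beta(J) = \infty$, so every node branches a.s.\ before $t_\infty$; (ii) follows from $\omega(\emptyset)$ being the first atom in $(t, t_\infty)$ of a Poisson point process of intensity $\beta$; and (iv) expresses the independence of the families $(B_{0u})_{u \in \mathcal{U}}$ and $(B_{1u})_{u \in \mathcal{U}}$ from $\omega(\emptyset)$ and from each other, together with the observation that the canonical boundary measure of the subtree rooted at $i$, namely $\lim_s N^{(i)}_u(s) / \e^{\beta([t_0, s])}$, coincides with $\mathscr{L}(B_{iu})$ since $N^{(i)}_u(s) = N_{iu}(s)$ for $s \geq \omega(\emptyset)$.

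Property (iii) is the main point. The plan is to time-change by $\phi(s) := \beta([t_0, s))$, which by Proposition \ref{prop_bd_time_change} maps $\mathbb{T}$ to a Yule tree started at $t' := \phi(t) = \beta([t_0, t))$ and preserves the boundary measure. For a Yule tree started with one individual at time $t'$, the martingale $N(s)/\e^{s - t'}$ is $L^2$-bounded (its second moment is $2 - \e^{-(s-t')}$) and classically converges a.s.\ to an $\mathrm{Exp}(1)$ variable. Hence $\mathscr{L}(\partial \mathcal{T}) = \lim_s N(s)/\e^s = \e^{-t'} \cdot \lim_s N(s)/\e^{s - t'}$ is exponential with mean $\e^{-t'} = \e^{-\beta([t_0, t))}$, which is exactly (iii).

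For uniqueness, let $\PP_t$ satisfy (i)--(iv). Since $\mathcal{T}$ is fixed by (i) and $\alpha$ is determined by $\omega$ through the pure-birth relation $\alpha(ui) = \omega(u)$, and since by Remark \ref{rmq_justif_existence_measure_boundary} the random measure $\mathscr{L}$ is characterized by its countable family of cylinder values $(\mathscr{L}(B_u))_{u \in \mathcal{T}}$, it suffices to pin down the joint law of $(\omega(u), \mathscr{L}(B_u))_{u \in \mathcal{T}}$. Iterating (ii) and (iv) determines the law of $(\omega(u))_{|u| \leq n}$ for all $n$ by a straightforward induction. Applying (iii) and (iv) level by level, conditionally on the tree structure the values $(\mathscr{L}(B_u))_{|u| = n}$ are independent exponentials with mean $\e^{-\beta([t_0, \omega(u^-)))}$ (with $u^-$ the parent of $u$), and the additivity $\mathscr{L}(B_u) = \mathscr{L}(B_{u0}) + \mathscr{L}(B_{u1})$ recovers the values at smaller generations. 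The main obstacle is therefore concentrated in step (iii) of existence: identifying the Yule-martingale limit as exponential and confirming via Proposition \ref{prop_bd_time_change} that the boundary measure is faithfully transported under the time change; consistency of (iii) and (iv) across generations (that a mixture of Gamma$(2)$ laws coming from $\mathscr{L}(B_0) + \mathscr{L}(B_1)$ reproduces the exponential prescribed at the root) is then automatic.
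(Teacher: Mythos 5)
Your proof is correct and shares the paper's overall skeleton: existence by checking (i)--(iv) for the genealogy of a $(\beta,0)$ birth-death process equipped with the boundary measure of Definition \ref{def_bd_proc_measure_on_boundary}, and uniqueness by an induction on generations showing that (i)--(iv) determine the law of $(\alpha(u),\omega(u),\mathscr{L}(B_u))_{|u|\le n}$ for every $n$ (the paper encodes your ``independent exponentials at level $n$ plus additivity'' as five properties of a marginal law $\PP^n_t$ on $(\R^3)^{\mathcal{T}_n}$, your additivity step being its property 5). The genuine divergence is step (iii). The paper never time-changes: it invokes its appendix Proposition \ref{prop_bd_process} for the exact Laplace transform $\E_t(\e^{-\alpha N_T})$ under general diffuse Radon rates, substitutes $\alpha\mapsto\alpha\e^{-\beta([t_0,T])}$, and lets $T\to t_\infty$ by dominated convergence, getting $\E_t\bigl(\e^{-\alpha\mathscr{L}(\partial\mathcal{T})}\bigr)=\bigl(1+\alpha\e^{-\beta([t_0,t))}\bigr)^{-1}$ directly. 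You instead reduce to the homogeneous case via $\phi(s)=\beta([t_0,s))$ and Proposition \ref{prop_bd_time_change} --- legitimate, since $\beta\circ\phi^{-1}$ is Lebesgue measure (hence diffuse) and that proposition does preserve $\mathscr{L}$ --- and then quote the classical Yule limit. The paper's route buys self-containedness (Proposition \ref{prop_bd_process} is proved in the same appendix and handles the inhomogeneous case in one stroke); yours buys brevity and a familiar object. One caveat on your step (iii): the $L^2$-bound $\E[(N(s)/\e^{s-t'})^2]=2-\e^{-(s-t')}$ gives a.s.\ and $L^1$ convergence and non-degeneracy of the limit, but it does not by itself identify the limit law; the identification as $\mathrm{Exp}(1)$ still requires the geometric distribution of the Yule population at fixed times, so that $\e^{-(s-t')}N(s)$ converges in distribution to $\mathrm{Exp}(1)$ and a.s.\ convergence transfers this to the limit variable. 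That distributional input is exactly what the paper's Proposition \ref{prop_bd_process} supplies; citing it as classical in the Yule case is acceptable, but it --- and not the martingale bound --- is the real content of (iii).
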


\begin{proof}
Let $\mathbb{Q}_t$ be the law of the genealogy of a $\beta$ pure-birth process started from $t$.
We will first show that the family $(\mathbb{Q}_t)_{t\in J}$ satisfies the assertions \emph{(i)-(iv)} of the theorem.

\emph{(i)} By definition $\alpha(\emptyset) = t$.
Also, the fact that for all $t\in T$, $\beta([t,t_\infty))=\infty$, implies that for each Poisson point process with intensity $\beta$ on $J$, there are infinitely many points in $[t, t_\infty)$.
This implies that each individual in the process will eventually split into two, so that $\mathcal{T} = \bigcup_n\{0,1\}^n\quad\PP_t$-almost surely.

\emph{(ii)} Under $\mathbb{Q}_t$, $\omega(\emptyset)$ is distributed as the first point of a Poisson point process $B_\emptyset$ on $[t, t_\infty)$ with intensity $\beta$.
Therefore,
\[\mathbb{Q}_t(\omega(\emptyset)> s) = \mathbb{Q}_t(\# B_\emptyset \cap [t, s) = 0) =\e^{-\beta([t,s))}. \]

\emph{(iii)} By Proposition \ref{prop_bd_process}, writing $\E_t$ for the expectation under $\mathbb{Q}_t$, we have for $t<T<t_\infty$,
\[ \E_t(\e^{-\alpha N_T}) = 1-\frac{(1-\e^{-\alpha})}{\e^{- \beta([t,T])}+(1-\e^{-\alpha})(1-\e^{- \beta([t,T])})}. \]
Replacing $\alpha$ by $\alpha \e^{-\beta([t_0, T])}$ and letting $T\to t_\infty$, we have by dominated convergence:
\[\E_t(\e^{-\alpha \mathscr{L}(\partial \mathcal{T})}) = \frac{1}{\alpha\e^{-\beta([t_0, t))} + 1,} \]
which implies that $\mathscr{L}(\partial\mathcal{T})$ is an exponential random variable with mean $\e^{-\beta([t_0, t))}$.

\emph{(iv)} Let us define a family $(B_u)_{u\in\mathcal{T}}$ of independent Poisson point processes on $J$ with intensity $\beta$.
Let us write $F$ for the deterministic function such that for all $t\in J$, $F(t, (B_u)_{u\in\mathcal{T}})$ is the simple tree $\mathbb{T}=(\mathcal{T}, \alpha, \omega, \mathscr{L})$ constructed as in Definition \ref{def_bd_process}, which follows the distribution $\mathbb{Q}_t$.
By assumption, the two families $(B_{0u})_{u\in\mathcal{T}}$ and $(B_{1u})_{u\in\mathcal{T}}$ are independent, and by construction, we have
\[\mathbb{T}_0 = F(\omega(\emptyset), (B_{0u})_{u\in\mathcal{T}}) \text{ and }\mathbb{T}_1 = F(\omega(\emptyset), (B_{1u})_{u\in\mathcal{T}}), \]
where $\mathbb{T}_0$ and $\mathbb{T}_1$ are defined as in the statement of the Proposition.
Therefore, under $\mathbb{Q}_t$, the conditional distribution of $(\mathbb{T}_0, \mathbb{T}_1)$ given $\omega(\emptyset)$ is $\PP_{\omega(\emptyset)}^{\otimes 2}$.

Now, let us show that if a family $(\PP_t)_{t\in J}$ satisfies the assertions \textit{(i)-(iv)} of the Proposition, it satisfies also the following one.
Let $\mathcal{T}_n$ be the complete binary tree with $n$ generations
\[\mathcal{T}_n := \bigcup_{k=0}^n \{0,1\}^k,\]
and let $\PP_t^n$ be the distribution of $(\alpha(u), \omega(u), \mathscr{L}(B_u))_{u\in\mathcal{T}_n}$, where $(\mathcal{T}, \alpha, \omega, \mathscr{L})$ has distribution $\PP_t$.
Now we view $\PP_t^n$ as a probability measure on the space \\$(\R^3)^{\mathcal{T}_n} = \{(x(u), y(u),z(u)), u\in\mathcal{T}_n\}$.
Then we have
\begin{enumerate}
\item $x(\emptyset) := t\quad\PP^n_t$-almost surely.
\item For all $m\leq n$ and $u\in\mathcal{T}_m$, conditional on $x(u)$ and independently of the variables $(x(v), y(v))_{v\in\mathcal{T}_m\setminus\{u\}}$, the distribution of $y(u)$ is given by:
\[  \PP^n_t(y(u) > s) = \e^{-\beta([x(u), s))}\qquad s\geq x(u). \]
\item For all $u\in\{0,1\}^n$, conditional on $x(u)$ and independently of the rest, $z(u)$ is defined as an exponential random variable with mean $\e^{-\beta([t_0,x(u)))}$.
\item For all $u\in\mathcal{T}_{n-1}$, $x(u0) = x(u1) := y(u)$.
\item For all $u\in\mathcal{T}_{n-1}$, $z(u) := z(u0)+z(u1)$.
\end{enumerate}
Indeed, assertion 1 is directly deduced from \textit{(i)}, 5 is trivial because $\mathscr{L}$ is additive, and 2, 3 and 4 are proved by induction on $n$ using \textit{(iv)}.
One can check that 2 stems from \textit{(ii)} and \textit{(iv)},
3 from \textit{(iii)} and \textit{(iv)}, and 4 from \textit{(i)} and \textit{(iv)}.

Now it is clear that these five assumptions define $\mathbb{P}^n_t$ uniquely for $n\geq 0$ and $t\in J$.
Also, a measured simple tree $(\mathcal{T}, \alpha, \omega, \mathscr{L})$ for which $\mathcal{T} = \bigcup_n\{0,1\}^n$ is entirely described by $(\alpha(u), \omega(u), \mathscr{L}(B_u))_{u\in\mathcal{T}} \in (\R^3)^{\mathcal{T}}$.
This implies that $\mathbb{P}_t$ is uniquely determined by its marginal distribution $(\PP_t^n)_{n\geq 0}$.

Finally, we have shown that the family $(\mathbb{Q}_t)_{t\in J}$, where $\mathbb{Q}_t$ is the law of the genealogy of a $\beta$ pure-birth process started from $t$, satisfies assertions \textit{(i)-(iv)}.
In addition, we have shown that there is at most one family $(\PP_t)$ of simple tree distributions satisfying assertions \textit{(i)-(iv)}.
Therefore, such a family exists and is unique, which concludes the proof.

\end{proof}

\subsection{Proof of Lemmas \ref{lemma_link_bd_cpp} and \ref{lemma_link_bd_cpp+meas}}
\label{subsec:proof_lemma_link_bd_cpp}

  Let us write $\PP_z$ for the distribution of a CPP$(\nu, z)$.
  Let $\mathcal{N}$ be a Poisson point process with intensity $\dd t\otimes\nu$ as in our construction of CPP trees.
  %Let $\mathcal{N}_{z}$ denote the set of points of $\mathcal{N}$ with first component smaller than $T(z)$:
Recall that $T(z) = \inf\{t \geq 0, (x,t) \in \mathcal{N}, x \geq z\}$ and define  \[ \mathcal{N}_{z} := \mathcal{N}\cap ([0, T(z))\times[0,z]).\]
  Define also $\mathbb{T}^z$ as the comb function tree given by $\mathcal{N}_z$ with distribution denoted $\PP_z$.
  Write $\mathcal{P}_{z}$ for the distribution of the pair $(\mathcal{N}_{z}, T(z))$.
  
  In Proposition \ref{prop_bd_process_charac}, we characterized the distributions of pure-birth processes.
  As a result, to conclude the present proof, it is sufficient to show that the family $(\PP_z)_{z\in J}$ satisfies the following conditions:
  \begin{enumerate}[(i)]
    \item We have $\mathcal{T} = \bigcup_n\{0,1\}^n$ and $\alpha(\emptyset) = z\quad\PP_z$-almost surely.
    \item We have $\PP_z(\omega(\emptyset) < x) = \e^{-\beta((x,z])}$.
    \item Under $\PP_z$, $\mathscr{L}(\partial \mathcal{T})$ is an exponential r.v.\ with mean $\e^{-\beta((z,z_0])}$.
    \item Under $\PP_z$, define for $i\in\{0,1\}$,
    %\[\mathcal{T} = \{\emptyset\}\cup 0\mathcal{T} \cup 1\mathcal{T},\]
    $\alpha_i(u) := \alpha(iu)$, $\omega_i(u) := \omega(iu)$, $\mathscr{L}_i$ the measure on $\partial\mathcal{T}$ such that $\mathscr{L}_i(B_u) = \mathscr{L}(B_{iu})$ for all $u\in\mathcal{T}$ and finally $\mathbb{T}_i := (\mathcal{T}, \alpha_i, \omega_i, \mathscr{L}_i)$.
    Then the conditional distribution of the pair of trees $(\mathbb{T}_0,\mathbb{T}_1)$ given $\omega(\emptyset)$ is $\PP_{\omega(\emptyset)}^{\otimes 2}$, {i.e.} they are independent with the same distribution $\PP_{\omega(\emptyset)}$.
  \end{enumerate}
  Let us now prove each assertion.
  
  \textit{(i)} Since $\nu(\Rplus) = \infty$ we have a.s.\ for any $0\leq a < b \leq T(z)$:
  \[\#(\mathcal{N}_z\cap [a,b]\times\Rplus)=\infty.\]
  Also, since $\nu$ is diffuse, we have a.s.\ for all $x>0$ that $\#(\mathcal{N}\cap \Rplus \times \{x\}) \leq 1$
  Those two conditions imply that $\mathbb{T}^z$ is a complete binary tree.
  
  \textit{(ii) -- (iii)} The first branching point of the tree $\mathbb{T}^z$ is $\omega(\emptyset) =\max\{x>0, \; (t,x) \in\mathcal{N}_z\}.$
  Also the total mass of the tree is $\mathscr{L}(\partial\mathcal{T})=T(z)$, which is an exponential random variable with mean $(\overline{\nu}(z))^{-1}=\e^{-\beta((z,z_0])}$.
  We can easily compute the distribution of $\omega(\emptyset)$ under $\mathcal{P}_z$, since conditional on $T(z)$, $\mathcal{N}_z$ is a Poisson point process on $[0,T(z))\times [0,z]$ with intensity $\dd t\otimes\nu$.
  Therefore, for $x\in (0,z]$:
  \begin{align*}
  \mathcal{P}_z(\omega(\emptyset) < x) &= \int_0^\infty \PP(T(z) \in \dd t) \e^{-t \nu([x, z])} \\
  &= \int_0^\infty \overline{\nu}(z) \e^{-\overline{\nu}(z)t} \e^{-t(\overline{\nu}(x)-\overline{\nu}(z))} \dd t\\
  &= \int_0^\infty \overline{\nu}(z) \e^{-\overline{\nu}(x)t} \dd t\\
  &= \frac{\overline{\nu}(z)}{\overline{\nu}(x)} = \e^{-\beta((x,z])}.
  \end{align*}
  
  \textit{(iv)} It remains to prove the branching property for the family $(\PP_z)_{z \in (0,z_0]}$.
  
  Under $\mathcal{P}_z$, conditional on $\omega(\emptyset)$, let $(\mathcal{N}_1, T_1)$ and $(\mathcal{N}_2, T_2)$ be independent random variables of identical distribution $\mathcal{P}_{\omega(\emptyset)}$.
  We concatenate $\mathcal{N}_1$ and $\mathcal{N}_2$, adding a point of height $\omega(\emptyset)$ between the two sets:
  \[\widetilde{\mathcal{N}} = \mathcal{N}_1 \cup \{(T_1, \omega(\emptyset))\} \cup \{(T_1 + t, x), (t,x) \in \mathcal{N}_2 \}. \]
  We claim that the following equality in distribution holds:
  \begin{equation} \label{eq_poisson_branch}
  (\widetilde{\mathcal{N}},T_1 + T_2) \eqdist (\mathcal{N}_{z}, T(z)),
  \end{equation}
  which formulates the branching property for the family $(\PP_z)_{z\in (0,z_0]}$.
  %so that we can conclude that $\mathbb{T}_z$ is obtained, conditional on $\omega(\emptyset)$, by gluing together two independent trees $\mathbb{T}_1$ and $\mathbb{T}_2$ following distribution $\PP_{\omega(\emptyset)}$.
  
  From basic properties of Poisson point processes, we know that conditional on $T(z)$, the highest atom of $\mathcal{N}_z$ is $(U, Z)$, with $U$ having a uniform distribution on $[0, T(z)]$ and $Z:=\omega(\emptyset)$ independent of $U$, such that
  \[\mathcal{P}^z(Z \leq x \mid T(z)) = \e^{-T(z)(\overline{\nu}(x)-\overline{\nu}(z))}.\]
  The joint distribution of $(Z, T(z))$ is therefore given by:
  \begin{align*}
  \E[f(T(z))\mathbbm{1}_{Z \leq x}] &= \int_0^\infty \overline{\nu}(z) \e^{-\overline{\nu}(z)t} \e^{-t(\overline{\nu}(x)-\overline{\nu}(z))} f(t) \dd t\\
  &= \int_0^\infty \overline{\nu}(z) \e^{-\overline{\nu}(x) t} f(t) \dd t\\
  &= \int_0^\infty \overline{\nu}(z) \int_{\overline{\nu}(x)}^\infty t \e^{- u t} \dd u \; f(t) \dd t\\
  &= \int_{\overline{\nu}(x)}^\infty \frac{\overline{\nu}(z)}{u^2} \int_0^\infty t u^2 \e^{- u t} f(t) \dd t \; \dd u \\
  \end{align*}
  In other words, the random variable $\overline{\nu}(Z)$ has a density $\frac{\overline{\nu}(z)}{u^2} \1_{u \geq \overline{\nu}(z)} \dd u$, and conditional on $\overline{\nu}(Z)$, $T(z)$ follows a Gamma distribution with parameter $(\overline{\nu}(Z), 2)$.
  As $U/{T(z)}$ is uniform on $[0, 1]$ and independent of $Z$, one can check that $(Z, T(z), U)$ has the same distribution as $(Z, T_1 + T_2, T_1)$, where conditional on $Z$, the variables $T_1$ and $T_2$ are independent with the same exponential distribution with parameter $\overline{\nu}(Z)$.
 This concludes the proof of \eqref{eq_poisson_branch} since conditional on $(Z, T(z), U)$ ({resp.} $(Z, T_1 + T_2, T_1)$), $\mathcal{N}_z\setminus\{(U,Z)\}$ ({resp.} $\widetilde{\mathcal{N}}\setminus\{(T_1, Z)\}$) is a Poisson point process on $[0,T(z))\times [0,Z]$ ({resp.} on $[0,T_1+T_2)\times [0,Z]$) with intensity $\dd t\otimes \nu$.

\subsection{Subordinators and Regenerative Sets} \label{app_sub_reg}
We use some classical results about regenerative sets and subordinators, whose proofs can be found in the first two sections of Bertoin's Saint-Flour lecture notes \citep{bertoin_subordinators:_1997}.

\begin{definition}
A \textbf{subordinator} is a right-continuous, increasing Markov process $(\sigma_t)_{t\geq 0}$ started from $0$ with values in $[0, \infty]$, where $\infty$ is an absorbing state, such that for all $s<t$, conditional on $\{\sigma_s < \infty\}$, we have
\[\sigma_t - \sigma_s \eqdist \sigma_{t-s}. \]
\end{definition}

\begin{theorem}
The distribution of a subordinator is characterized by its \textbf{Laplace exponent} defined as the increasing function $\phi: \Rplus \to \Rplus$, such that for all $\lambda, t \geq 0$,
\[\E[\e^{-\lambda \sigma_t}] = \e^{-t\phi(\lambda)}, \]
with the convention $\e^{-\lambda \infty} = 0$ for all $\lambda \geq 0$.
The Laplace exponent can be written under the form
\[\phi(\lambda) = k + d\lambda + \int_{(0, \infty)} (1 - \e^{-\lambda x}) \pi(\dd x),\]
where $k$ is called the \textbf{killing rate}, $d$ the \textbf{drift coefficient} and $\pi$ the \textbf{Lévy measure} of the subordinator.
Necessarily, we have $k,d \geq 0$ and $\pi$ satisfies 
\[\int_{(0, \infty)} (1\wedge x) \pi(\dd x) < \infty.\]
Letting $\zeta := \inf\{t \geq 0, \; \sigma_t = \infty\}$ be the lifetime of the subordinator, $\zeta$ follows an exponential distribution with parameter $k$ (if $k =0$, then $\zeta \equiv \infty$).
Also we have almost surely for all $t < \zeta$,
\[\sigma_t = dt + \sum_{s\leq t} \Delta\sigma_s,\]
and the set of jumps $\{(s, \Delta \sigma_s), \; \Delta\sigma_s > 0\}$ is a Poisson point process with intensity $\dd s \otimes \pi$.
\end{theorem}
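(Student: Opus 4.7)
The plan is to prove this classical Lévy-Khintchine characterization of subordinators by specializing the Lévy-Itô decomposition of a real-valued Lévy process to the non-decreasing case with killing. Three ingredients have to be extracted from $\sigma$: the drift, the jump structure, and the killing mechanism.

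First I would derive the exponential form $\E[\e^{-\lambda\sigma_t}] = \e^{-t\phi(\lambda)}$. The defining identity $\sigma_{s+t}-\sigma_s \eqdist \sigma_t$ on $\{\sigma_s<\infty\}$, combined with the Markov property, gives independence of increments over disjoint intervals and hence the multiplicativity
\[\E[\e^{-\lambda\sigma_{s+t}}] = \E[\e^{-\lambda\sigma_s}]\,\E[\e^{-\lambda\sigma_t}].\]
Right-continuity of $\sigma$ makes this map right-continuous in $t$, and a standard solution of this multiplicative Cauchy equation forces the form $\e^{-t\phi(\lambda)}$ for a unique $\phi(\lambda)\in[0,\infty]$. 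Monotonicity of $\sigma$ yields monotonicity of $\phi$, and $\phi(\lambda)<\infty$ for $\lambda>0$ follows from $\PP(\sigma_t<\infty)>0$ for some $t>0$.

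Next comes the structural decomposition of $\sigma$. Define the random jump measure $N:=\sum_{s:\Delta\sigma_s>0}\delta_{(s,\Delta\sigma_s)}$ on $\Rplus\times(0,\infty)$. Independence of increments of $\sigma$ over disjoint time intervals translates into independence of the restrictions of $N$ to disjoint time strips, while stationarity of increments makes the law of $N$ translation-invariant in the time coordinate. Standard characterizations of Poisson point processes then imply $N$ is Poisson with intensity $\dd s\otimes \pi$ for a unique $\sigma$-finite measure $\pi$ on $(0,\infty)$. The integrability $\int_{(0,\infty)}(1\wedge x)\pi(\dd x)<\infty$ is equivalent to $\sum_{s\leq t}\Delta\sigma_s<\infty$ a.s., which holds on $\{t<\zeta\}$ since $\sigma_t<\infty$ dominates the jump sum. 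The remainder $\sigma^c_t:=\sigma_t-\sum_{s\leq t}\Delta\sigma_s$ is then itself a non-decreasing Lévy process with continuous paths on $[0,\zeta)$, so by a standard argument it must be deterministic and linear: $\sigma^c_t = d\,t$ for some $d\geq 0$. Finally, the lack-of-memory inherited from stationary independent increments forces the killing time $\zeta$ to be exponentially distributed of some rate $k\geq 0$ (identically $+\infty$ when $k=0$), independent of the pre-killing trajectory. Assembling these three contributions and applying the exponential formula for Poisson integrals to $N$ yields
\[\phi(\lambda) = k + d\lambda + \int_{(0,\infty)}(1-\e^{-\lambda x})\pi(\dd x).\]

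The main obstacle is the rigorous identification of $N$ as a Poisson point process; I would do this either by directly computing the finite-dimensional distributions of $N$ on disjoint rectangles (using independence/stationarity of the increments of $\sigma$ together with the a.s.\ local finiteness $\#\{s\leq t:\Delta\sigma_s\geq \varepsilon\}<\infty$ for each $\varepsilon>0$), or by invoking the general Lévy-Itô decomposition as presented in the Bertoin Saint-Flour lecture notes cited in the excerpt. A secondary technical point is keeping track of the killing throughout: it is cleanest to first construct an unkilled subordinator $\widetilde{\sigma}$ with Laplace exponent $\phi-k$ and then define $\sigma_t:=\widetilde{\sigma}_t$ for $t<\zeta$ and $\sigma_t:=+\infty$ afterwards, where $\zeta$ is an independent exponential variable of rate $k$.
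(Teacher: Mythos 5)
The paper gives no proof of this statement: the appendix on subordinators and regenerative sets opens by deferring all proofs of these classical facts to the first two sections of Bertoin's Saint-Flour lecture notes, which is the very reference your sketch ultimately invokes. Your outline --- multiplicative Cauchy equation for the Laplace transform, identification of the jump measure as a Poisson point process via stationary independent increments, linearity of the continuous part, memorylessness of the killing time, and assembly via the exponential formula --- is a correct rendering of that classical L\'evy--It\^o argument, so it is in essence the same approach the paper relies on.
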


The \textbf{renewal measure} of a subordinator is defined as the measure $U(\dd x)$ on $\Rplus$ such that for any non-negative measurable function $f$
\[\int_{\Rplus} f(x) U(\dd x) = \E \left [\int_0^\zeta f(\sigma_t) \dd t\right ]. \]
This renewal measure characterizes the distribution of $\sigma$ since its Laplace transform is the inverse of $\phi$
\[\frac{1}{\phi(\lambda)} = \int_{\Rplus} \e^{-\lambda x} U(\dd x).\]
Remark also that setting $L_x := \inf \{t \geq 0, \sigma_t > x\}$ the right-continuous inverse of $\sigma$, we have
\[U(x) := U([0,x]) = \E \left [\int_0^\infty \1_{\sigma_t \leq x} \, \dd t \right ] = \E[L_x].\]

\begin{definition}
Given a probability space $(\Omega, \mathcal{F}, \PP)$ equipped with a complete, right-continuous filtration $(\mathcal{F}_t)_{t\geq 0}$, a \textbf{regenerative set} $R$ is a random closed set containing $0$ for which the following properties hold
\begin{itemize}
	%\item For all $\omega\in \Omega$, $R(\omega)$ is left-closed, {i.e.} for all $(t_n)_{n\geq 1}$ decreasing sequence in $R$, $\lim_{n \to \infty} t_n \in R$.
	
	\item \emph{Progressive measurability.} For all $t\geq 0$, the set
	$\{ (s, \omega)  \in [0,t]\times \Omega , \; s \in R(\omega) \} $
	%\[\begin{cases}
	%[0,t]\times \Omega & \longrightarrow \{0,1\} \\
	%(s, \omega) & \longrightarrow \1_{s \in R(\omega)}
	%\end{cases} \]
	is in $\mathcal{B}([0,t])\otimes\mathcal{F}_t$.
	
	\item \emph{Regeneration property.} For a $(\mathcal F_t)_{t\geq 0}$-stopping time $T$ such that a.s.\ on $\{T< \infty\}$, $T\in R$ and $T$ is not right-isolated in $R$, we have:
	\[ R\cap[T, \infty[ - T \eqdist R,\]
	where $R\cap[T, \infty[ - T$ is defined formally as the set $\{ t\geq 0,\; T+t \in R \}$.
\end{itemize}
\end{definition}

We define the range of a subordinator $\sigma$ as the closed set $\overline{\{\sigma_t, \; t \geq 0\}}$, and see that all regenerative sets can be expressed in this form.

\begin{theorem}
The range of a subordinator is a regenerative set. Conversely, if $R$ is a regenerative set without isolated points, there exists a subordinator $\sigma$ whose range is $R$ almost surely.
\end{theorem}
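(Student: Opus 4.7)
The plan is to treat the two implications separately. The forward direction (range of a subordinator is regenerative) follows directly from the strong Markov property of subordinators, while the converse requires constructing a local time on $R$ and then inverting it to obtain a subordinator.

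For the forward direction, let $\sigma$ be a subordinator with natural filtration $(\mathcal{F}^\sigma_t)$ and put $R := \overline{\{\sigma_t : t < \zeta\}}$, which contains $0$. I would work with the time-changed filtration $\mathcal{G}_s := \mathcal{F}^\sigma_{L_s}$, where $L_s := \inf\{u : \sigma_u > s\}$ is the right-continuous inverse. Closure of $R$ and $0\in R$ are immediate. Progressive measurability comes from the identity $s \in R \iff \sigma_{L_{s^-}} = s$, combined with the joint measurability of $(s,\omega)\mapsto L_s(\omega)$. For the regeneration property, given a $(\mathcal{G}_s)$-stopping time $T$ lying a.s.\ in $R$ on $\{T<\infty\}$ and not right-isolated, $L_T$ is an $(\mathcal{F}^\sigma_t)$-stopping time satisfying $\sigma_{L_T} = T$ (non-right-isolation excludes $T$ falling in a jump gap of $\sigma$). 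The strong Markov property of $\sigma$ at $L_T$ then yields $(\sigma_{L_T+u} - T)_{u\geq 0} \eqdist (\sigma_u)_{u\geq 0}$ independently of $\mathcal{F}^\sigma_{L_T}$, and taking closures of ranges gives $R\cap[T,\infty) - T \eqdist R$ independently of $\mathcal{G}_T$.

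For the converse, given a regenerative set $R$ without isolated points, adapted to some filtration $(\mathcal{F}_t)$, the strategy is to construct a continuous, adapted, non-decreasing process $L$ (a local time for $R$) that increases only on $R$, and then to define $\sigma_u := \inf\{t : L_t > u\}$. Granted such an $L$, each $\sigma_u$ is an $(\mathcal{F}_t)$-stopping time lying in $R$ by construction, and the regeneration property applied at $\sigma_u$ delivers stationary independent increments for $(\sigma_u)_{u\geq 0}$, showing that $\sigma$ is a subordinator. The fact that $L$ grows only on $R$, together with the absence of isolated points in $R$, ensures $\overline{\{\sigma_u : u\geq 0\}} = R$ a.s.

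The main obstacle is the construction of $L$ itself. In the ``heavy'' case $\lambda(R) > 0$ a.s., one can simply take $L_t := \lambda(R \cap [0,t])$, and the whole converse direction becomes elementary. In general, however, $R$ may have zero Lebesgue measure and this naive definition fails; one must then pass through the age process $A_t := t - \sup(R \cap [0,t])$, which is strong Markov by the regeneration property of $R$, and construct $L$ as a local time at zero for $A$ via excursion-theoretic methods. This is the technical heart of the argument. Once $L$ is available, verifying that $\sigma = L^{-1}$ is a subordinator whose range coincides with $R$ reduces to careful applications of the regeneration property, essentially dual to the forward direction.
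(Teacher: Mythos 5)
First, a point of context: the paper does not prove this theorem at all. It is recalled in Appendix \ref{app_sub_reg} as a classical result, with the proofs explicitly delegated to the first two sections of \citep{bertoin_subordinators:_1997}, and the only case the paper ever actually uses is the heavy one of Remark \ref{rmq_heavy_regen_set_renewal_measure}, since the clonal set $\widetilde R$ of Theorem \ref{thm_cpp_clonal} has positive Lebesgue measure. So your attempt must be judged on its own merits. Your forward direction is essentially complete and follows the standard route: pass to the time-changed filtration $\mathcal G_s=\mathcal F^\sigma_{L_s}$, observe that a $(\mathcal G_s)$-stopping time $T$ lying in $R$ and not right-isolated satisfies $\sigma_{L_T}=T$ (membership in $R$ rules out the interior of a jump gap, non-right-isolation rules out its left endpoint), and apply the strong Markov property of $\sigma$ at the time $L_T$. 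One small repair: the pointwise description of the closed range needs a disjunction, namely $s\in R$ if and only if $\sigma_{L_s}=s$ \emph{or} $\sigma_{L_s-}=s$; your single equality, under either reading of the subscript, misses one family of gap endpoints (the non-attained left endpoints, respectively the right endpoints reached by jumps). This does not damage the progressive-measurability argument.

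The converse, however, contains a genuine gap, and it sits exactly where you write ``excursion-theoretic methods''. Your skeleton --- build a continuous adapted nondecreasing $L$ increasing exactly on $R$, set $\sigma_u:=\inf\{t:\,L_t>u\}$, then use regeneration at the stopping times $\sigma_u$ (which lie in $R$ and are not right-isolated because $L$ increases immediately to their right) to obtain stationary independent increments and to identify the closed range with $R$ --- is the correct strategy, and in the heavy case $\lambda(R)>0$ your choice $L_t=\lambda(R\cap[0,t])$ makes it a complete, elementary proof; this is precisely the content of Remark \ref{rmq_heavy_regen_set_renewal_measure}. But the theorem is stated for general $R$, including sets of zero Lebesgue measure (the stable sets of \citep{marchal_nested_2004} discussed in the paper are of this kind), and there the existence of $L$ \emph{is} the theorem: every other step is soft. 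As phrased, your deferral is circular, because It\^o excursion theory for the age process $A_t=t-\sup(R\cap[0,t])$ presupposes a local time at $0$ for $A$, which is exactly the object you are trying to build. A non-circular argument must construct $L$ directly --- via the Blumenthal--Getoor theory of local time at a regular point, or by counting excursion intervals of length greater than $\epsilon$ completed by time $t$, rescaling by a deterministic normalization $c(\epsilon)$ and proving almost sure convergence as $\epsilon\to 0$ --- and it must yield a functional of $R$ canonical enough that the local time of the shifted set is the shifted local time, so that regeneration transfers to the increments of $\sigma$. That convergence and canonicity argument is the substantive content of the cited sections of Bertoin's notes, and it is absent from your proposal. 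A final, smaller point: your argument correctly uses independence of $R\cap[T,\infty)-T$ from $\mathcal F_T$ in the regeneration property, whereas the paper's definition in Appendix \ref{app_sub_reg} only requires equality in distribution; without independence the increments of $\sigma$ need not be independent, so you are implicitly (and rightly) working with the standard, stronger definition.
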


\begin{remark} \label{rmq_heavy_regen_set_renewal_measure}
  In the case where $\lambda (R)>0$ a.s., one can define such a subordinator as
  \[\sigma_t := \inf\{x \geq 0, \; \lambda([0,x]\cap R) > t \}. \]
  Then $\sigma$ is the unique subordinator with drift $1$ and range $R$, and its renewal measure is $U(\dd x) = \PP(x \in R) \,\dd x$.
  Notice that $\lambda(R) = \inf \{t \geq 0, \; \sigma_t = \infty\} = \zeta$ by definition.
  Therefore $\lambda(R)$ is an exponential random variable with parameter $k$, the killing rate of $\sigma$.
\end{remark}

\bibliographystyle{abbrvnat}
\bibliography{refs}

\addcontentsline{toc}{section}{References}
\phantomsection

\end{document}